\numberwithin{equation}{section}
\newcommand{\llb}{\llbracket}
\newcommand{\rrb}{\rrbracket}
\renewcommand{\epsilon}{\varepsilon}
\renewcommand{\phi}{\varphi}
\renewcommand{\theta}{\vartheta}
\DeclareMathOperator{\Aut}{Aut}
\DeclareMathOperator{\diag}{diag}
\DeclareMathOperator{\End}{End}
\DeclareMathOperator{\Frac}{Frac}
\DeclareMathOperator{\Gal}{Gal}
\DeclareMathOperator{\GL}{GL}
\DeclareMathOperator{\Hom}{Hom}
\DeclareMathOperator{\id}{id}
\DeclareMathOperator{\ind}{ind}
\let\inf\relax \DeclareMathOperator{\inf}{inf}
\DeclareMathOperator{\Irr}{Irr}
\DeclareMathOperator{\nr}{nr}
\DeclareMathOperator{\rad}{rad}
\let\Re\relax\DeclareMathOperator{\Re}{Re}
\DeclareMathOperator{\res}{res}
\DeclareMathOperator{\Spec}{Spec}
\DeclareMathOperator{\Quot}{Quot}
\newcommand{\SK}{S\!K}
\DeclareMathOperator{\Tr}{Tr}
\newcommand{\AAA}{\mathfrak A}
\newcommand{\mm}{\mathfrak m}
\newcommand{\G}{\mathcal G}
\newcommand{\OO}{\mathcal O}
\newcommand{\Q}{\mathcal Q}
\newcommand{\CC}{\mathbb C}
\newcommand{\QQ}{\mathbb Q}
\newcommand{\ZZ}{\mathbb Z}
\newcommand{\al}{\mathrm{c}}
\newcommand{\cyc}{\mathrm{cyc}}
\newcommand{\eet}{\mathrm{\acute et}}
\newcommand{\ab}{\mathrm{ab}}
\newcommand{\cent}{\mathfrak z}
\newcommand{\one}{\mathbbm 1}
\newcommand{\uplightning}{\rotatebox[origin=c]{180}{\ensuremath{\lightning}}}
\newenvironment{psmallmatrix}
{\left(\begin{smallmatrix}}
	{\end{smallmatrix}\right)}
\newlist{theoremlist}{enumerate}{1}
\setlist[theoremlist]{label=(\roman{theoremlisti}), ref=\thetheorem.\roman{theoremlisti},noitemsep, topsep=.2ex}
\newlist{propositionlist}{enumerate}{1}
\setlist[propositionlist]{label={(\roman{propositionlisti})}, ref=\theproposition.\roman{propositionlisti},noitemsep, topsep=.2ex}
\newcommand{\Bigcomplex}[6][1]{
	\begin{tikzcd}[row sep=-1ex, column sep=2em, ampersand replacement = \&, baseline=#1pt]
		\Big[{#2} \arrow[r, "#4"] \& {#5}\Big] \\
		{\scriptstyle #3} \& {\scriptstyle #6}
	\end{tikzcd}
}
\newcommand{\Hugecompl}[3][1]{
	\begin{tikzcd}[row sep=-1ex, column sep=2em, ampersand replacement = \&, baseline=#1pt]
		\Bigg[{#2} \Bigg] \\
		{\scriptstyle #3}
	\end{tikzcd}
}
\newcommand{\Hugecomplex}[6][1]{
	\begin{tikzcd}[row sep=-1ex, column sep=2em, ampersand replacement = \&, baseline=#1pt]
		\Bigg[{#2} \arrow[r, "#4"] \& {#5}\Bigg] \\
		{\scriptstyle #3} \& {\scriptstyle #6}
	\end{tikzcd}
}
\newcommand{\xrightarrowdbl}[2][]{%
	\xrightarrow[#1]{#2}\mathrel{\mkern-14mu}\rightarrow
}
\newcommand{\Dchi}{\mathfrak D}
\newcommand{\Z}{T}
\newcommand{\rcp}{\mathrm{rcp}}
\newcommand{\cp}{\mathrm{cp}}
\newcommand{\CCCb}{\mathbf C^{\mathrm b}}
\newcommand{\CCCbt}{\mathbf C^{\mathrm b}_{\mathrm{tor}}}
\newcommand{\Sigmachi}{\mathfrak O}
\DeclareMathAlphabet\EuScript{U}{eus}{m}{n}
\newcommand{\RWL}{\mathbb{L}}
\newcommand{\DCpt}{\DC^{\mathrm{perf}}_{\mathrm{tor}}}
\newcommand{\ordred}{\mathrm{ord}^\mathrm{red}}
\newcommand{\DC}{\mathbf D}
\newcommand{\LL}{\mathcal L}
\newcommand{\PPP}{\mathbf P}
\let\underbrace\relax
\let\underbrace\LaTeXunderbrace
\theoremstyle{plain}
\newtheorem{theorem}{Theorem}[section]
\newtheorem{lemma}[theorem]{Lemma}
\newtheorem{proposition}[theorem]{Proposition}
\newtheorem{corollary}[theorem]{Corollary}
\newtheorem{conjecture}[theorem]{Conjecture}
\newtheorem*{theorem*}{Theorem}
\newtheorem*{conjecture*}{Conjecture}
\newtheorem*{corollary*}{Corollary}
\theoremstyle{definition}
\newtheorem{definition}[theorem]{Definition}
\theoremstyle{remark}
\newtheorem{remark}[theorem]{Remark}
\Crefname{lemma}{Lemma}{Lemmata}
\Crefname{claim}{Claim}{Claims}
\Crefname{proposition}{Proposition}{Propositions}
\Crefname{conjecture}{Conjecture}{Conjectures}
\Crefname{example}{Example}{Examples}
\crefname{page}{page}{pages}
\Crefname{condition}{Condition}{Conditions}
\Crefname{question}{Question}{Questions}
\Crefname{theoremlisti}{Theorem}{Theorems}
\Crefname{propositionlisti}{Proposition}{Propositions}
\title{An equivariant $p$-adic Artin conjecture}
\author{Ben Forrás}
\address{Universität der Bundeswehr\\
	INF 1 Institut für Theoretische Informatik, Mathematik und Operations Research \\
	Werner-Heisenberg-Weg 39\\
	85579 Neubiberg\\
	Germany}
\email{ben.forras@unibw.de}
\urladdr{https://bforras.eu}
\subjclass[2020]{11R23, 11R42, 11R80} 
\keywords{$p$-adic Artin $L$-function, equivariant Iwasawa main conjecture, reduced norm}
\date{Version of September 29, 2025}
\begin{document}

\begin{abstract}
We formulate an equivariant version of Greenberg's $p$-adic Artin conjecture for smoothed equivariant $p$-adic Artin $L$-functions in the context of an arbitrary one-dimensional admissible $p$-adic Lie extension of a totally real number field.
Using results of the author on the Wedderburn decomposition of the total ring of quotients of the Iwasawa algebra $\Lambda(\G)$, we deduce validity of the conjecture in several interesting cases.
\end{abstract}

\maketitle

\section*{Introduction}
Integrality questions have accompanied the study of $p$-adic Artin $L$-functions since its inception. Indeed, in the seminal works of Pi.~Cassou-Nogu\`es \cite{Cassou1979} resp. Deligne and Ribet  \cite{Ribet1979,DeligneRibet}, it was shown that in the case of a linear totally even $p$-adic Artin character $\chi$ with open kernel, the associated $p$-adic $L$-function can be expressed as a power series $G_{\chi}(X)$ divided by an explicit polynomial. These results were generalised to arbitrary $p$-adic Artin characters with open kernel by Greenberg \cite{Greenberg1983}, employing the method of Brauer induction. This led to a weakening of the integrality property above, with $G_{\chi}(X)$ now being a fraction of two power series. Greenberg posited two conjectures, namely the $p$-adic Artin conjecture and a refinement thereof, which assert that $G_{\chi}(X)$ is contained in the ring of power series over a certain ring of integers, possibly up to a constant factor. 
These conjectures are now theorems: {Greenberg showed that the $p$-adic Artin conjecture follows from the main conjecture, which was} proven by Wiles \cite{Wiles}, and its refinement was proven by Ritter and Weiss \cite{TEIT-II}. In this work, we propose a $p$-adic Artin conjecture for \emph{equivariant} $p$-adic Artin $L$-functions, and prove it in certain important cases.

\begin{wrapfigure}{o}{0.25\textwidth}
	\vspace*{-1cm}
	\begin{center}
		\begin{tikzcd}[every label/.append style = {font = \small}, every arrow/.append style={no head}, row sep=1cm]
			& L_\infty^+ \arrow[d] \arrow[ldd, "\G"] \\
			K_\infty \arrow[d, "\ZZ_p\simeq\Gamma"'] \arrow[ru, "H"] & L^+ \arrow[ld, no head] \\
			|[alias=K]|K \arrow[d] & \\[-.5cm]
			\QQ                                         
		\end{tikzcd}
	\end{center}
	\vspace*{-1cm}
\end{wrapfigure}

Let $\G$ be the Galois group of an admissible one-dimensional $p$-adic Lie extension $L^+_\infty/K$, where $L^+/K$ is the maximal totally real subextension of a finite CM extension $L/K$ such that $L$ contains a primitive $p$th root of unity, and $L^+_\infty$ is the cyclotomic $\ZZ_p$-extension of $L^+$. Let $S$ be a finite set of places of $K$ containing all places ramifying in $L_\infty^+/K$ and all infinite places. The group $\G$ is isomorphic to the semidirect product $H\rtimes\Gamma$  where $H\colonequals\Gal(L^+_\infty/K_\infty)$ is a finite group and $\Gamma\simeq \ZZ_p$. The results cited above are valid for any $p$-adic Artin character $\chi:\G\to\CC_p$ with open kernel, and the associated $p$-adic $L$-function $L_{p,S}(\chi,s)$ can be expressed as a fraction of a power series $G_{\chi,S}(X)$ and an explicit polynomial $H_\chi(X)$.

Let us now proceed to the equivariant setting, meaning that one treats all characters $\chi$ {of $\G$} simultaneously. The relevant object is the equivariant $p$-adic Artin $L$-function $\Phi_S\in \cent(\Q(\G))^\times$ constructed by Ritter--Weiss, where $\Q(\G)$ denotes the total ring of quotients of the Iwasawa algebra $\Lambda(\G)=\ZZ_p\llb\G\rrb$, and $\cent(\Q(\G))$ is the centre thereof. For each $p$-adic Artin character $\chi$ with open kernel, there is a specialisation map sending $\Phi_S$ to $G_{\chi,S}(\gamma-1)/H_\chi(\gamma-1)$, with $\gamma$ being a fixed topological generator of $\Gamma$. The explicit polynomial $H_\chi$ may be seen as a ``shadow'' of the trivial character, and shows that denominators will occur in $\Phi_S$, so one cannot expect an integrality property like the $p$-adic Artin conjecture for $\Phi_S$. A possible way to remedy this is the introduction of a nonempty smoothing set $T$ disjoint from $S$, and adjusting the characterwise $p$-adic $L$-functions by modified Euler factors at primes in $T$. This idea goes back to Tate \cite{Tate}, who used it in his treatise on the Brumer--Stark conjecture, now a theorem of Dasgupta--Kakde--Silliman--Wang \cite{BrumerStarkZ}. Smoothing at the set $T$, we obtain a smoothed equivariant $p$-adic Artin $L$-function $\Phi^T_S\in \cent(\Q(\G))^\times$. Then in analogy with Greenberg's $p$-adic Artin conjecture, we posit the following integrality statement:
\begin{conjecture*}[equivariant $p$-adic Artin conjecture]
	Let $\Gamma_0\colonequals\Gamma^{p^{n_0}}$ where $n_0$ is large enough such that $\Gamma_0$ is central in $\G$.
	Let $\mathfrak M$ be a $\Lambda(\Gamma_0)$-order in $\Q(\G)$ containing $\Lambda(\G)$. Then the smoothed equivariant $p$-adic Artin $L$-function $\Phi_S^T$ is in the image of the composite map
	\begin{equation*}
		\mathfrak M\cap \Q(\G)^\times \to K_1(\Q(\G)) \xrightarrow{\nr} \cent(\Q(\G))^\times,
	\end{equation*}
	where the first arrow is the natural map sending an invertible element to the class of the $1\times 1$ matrix consisting of said element, and the second arrow is the reduced norm map.
\end{conjecture*}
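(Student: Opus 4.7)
The plan is to reduce the conjecture to a character-indexed collection of classical integrality statements and then reassemble them using the author's Wedderburn decomposition of $\Q(\G)$. Since this decomposition writes $\Q(\G)$ as a finite product $\prod_i A_i$ of central simple algebras over their respective centres $F_i$, one obtains $\cent(\Q(\G))^\times \cong \prod_i F_i^\times$, and the reduced norm splits as a product $\nr = \prod_i \nr_i$ with $\nr_i\colon K_1(A_i)\to F_i^\times$. The simple factors are indexed by $\Gamma_0$-Galois orbits of $\CC_p$-valued characters $\chi$ of $\G$ with open kernel, and by construction of the smoothed equivariant $L$-function the projection $\pi_i(\Phi_S^T)\in F_i^\times$ should coincide with the Deligne--Ribet--Greenberg power series $G_{\chi,S}$ attached to any character $\chi$ representing the $i$-th orbit, corrected by the Euler factors at $T$.

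First I would verify this identification precisely. This amounts to tracking the definition of $\Phi_S^T$ through the character projections and checking that the $T$-Euler factors introduced by smoothing are exactly what is needed to clear the trivial-character denominator appearing in Greenberg's construction. Second, I would invoke the classical $p$-adic Artin conjecture --- proven by Wiles~\cite{Wiles} with the refinement of Ritter and Weiss~\cite{TEIT-II} --- which places each smoothed $G_{\chi,S}$ in the ring of integers of $F_i$. Together, these two steps show that $\Phi_S^T$ is componentwise integral under the Wedderburn decomposition, i.e.\ lies in the integral part of $\prod_i F_i^\times$.

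The third and hardest step is to lift this componentwise integrality to the image of $\mathfrak M \cap \Q(\G)^\times$ under the reduced norm map. The main obstacle is that $\nr$ need not be surjective onto the integral part of $\cent(\Q(\G))^\times$: when a factor $A_i$ is a genuine division algebra of Schur index strictly greater than one, its image under $\nr_i$ is cut out by local Hasse-invariant conditions, in the spirit of Fr\"ohlich's Hom-description, and even componentwise integrality does not automatically assemble into a single $K_1$-class because $\SK_1(A_i)$ may be non-trivial. This is precisely where the author's explicit Wedderburn results enter: in the cases addressed in the paper the Wedderburn blocks are transparent enough --- for instance, all $A_i$ are matrix algebras over commutative fields, or the division algebras that arise can be described as cyclic algebras with explicit local invariants --- that componentwise integrality forces a simultaneous lift to an element of $\mathfrak M \cap \Q(\G)^\times$ with reduced norm $\Phi_S^T$. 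Outside these favourable cases, settling the conjecture would require a finer analysis of $\SK_1$ and of the Hom-description of $\nr(K_1(\Q(\G)))$.
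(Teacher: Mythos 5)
The statement you set out to prove is labelled a \emph{conjecture} in the paper, and the paper does not prove it in full generality; it proves only the maximal-order variant \emph{under the assumption of the equivariant Iwasawa main conjecture} (\cref{EIMCu}/\cref{sEIMCu}), and derives from that assumption the dimension-reduction and $K$-theoretic machinery needed. Your attempt to prove the conjecture outright, without ever invoking the main conjecture, cannot succeed.

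More concretely, your proposal contains a logical reversal that undermines it from the start. You treat ``componentwise integrality of $\Phi_S^T$'' --- that is, $\Phi_S^T \in \cent(\mathfrak M)$ --- as a step on the way to the conjecture. But as the paper points out after \cref{cor:Phi-M}, the containment $\Phi_S^T \in \cent(\mathfrak M)$ is a \emph{weaker consequence} of the conjecture, obtained from the putative preimage via \cref{MO10.1}; it is not an ingredient in proving the conjecture. Knowing that each character-wise projection $j_\chi(\Phi_S^T)$ lands in the integral part of $\cent(D_\chi)$ tells you nothing about whether these projections jointly arise as $\nr([x])$ for a single $x \in \mathfrak M \cap \Q(\G)^\times$: the image of $\nr$ restricted to such $x$ is not the full set of integral central units, and you have no candidate for $x$. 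You correctly flag this lifting step as the hardest point, but then you essentially concede it cannot be completed by the methods you have introduced, which means the argument does not close.

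What the paper actually does is orthogonal to your outline. Assuming the smoothed EIMC, one has an element $\zeta_S^T \in K_1(\Q(\G))$ with $\nr(\zeta_S^T) = \Phi_S^T$ and $\partial(\zeta_S^T) = [Y_S^T]$. The Johnston--Nickel module $Y_S^T$ admits a free length-one resolution $\Lambda(\G)^m \xrightarrow{\alpha} \Lambda(\G)^m$ with $\alpha \in M_m(\Lambda(\G)) \cap \GL_m(\Q(\G))$, so $\partial(\zeta_S^T) = \partial([\alpha])$, and by the localisation sequence \eqref{Witte} one gets $\zeta_S^T = [\alpha]\cdot[\beta]$ for some $\beta \in \GL_n(\Lambda(\G))$. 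Vaserstein's isomorphism $K_1 \simeq (\;\cdot^\times)^{\mathrm{ab}}$ via the Dieudonné determinant then reduces the problem to showing $\det\alpha$ has a representative in $\mathfrak M(\G) \cap \Q(\G)^\times$, and this is handled Wedderburn-componentwise by the dimension-reduction result \cref{2.13}, which in turn rests on the skew Weierstra\ss{} preparation theorem and Jacobson's elementary reduction theorem for the noncommutative PID $\Sigmachi[\pi_\sD^{-1}]$. None of this appears in your outline. In particular, you never produce the crucial matrix $\alpha$ (which comes from the \emph{algebraic} side of the main conjecture, not from the analytic interpolation properties), so there is simply nothing for the reduced norm to act on. The classical Wiles/Ritter--Weiss integrality of $G_{\chi,S}$ is compatible with the conjecture but does not imply it.
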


\subsection*{Main result}
In this paper, we prove the following maximal order variant of the equivariant $p$-adic Artin conjecture:
\begin{theorem*}
	Assume the equivariant Iwasawa main conjecture (\cref{EIMCu}). Then the equivariant $p$-adic Artin conjecture holds for all maximal $\Lambda(\Gamma_0)$-orders in $\Q(\G)$ containing $\Lambda(\G)$.
\end{theorem*}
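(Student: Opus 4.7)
The plan is to reduce to the classical non-equivariant Greenberg $p$-adic Artin conjecture by a Wedderburn-component-wise analysis. By \cref{EIMCu}, there is a class $\zeta \in K_1(\Q(\G))$ with $\nr(\zeta) = \Phi_S^T$; the task is to upgrade $\zeta$ to an element in the image of $\mathfrak M \cap \Q(\G)^\times$.

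Using the description of $\Q(\G)$ from the author's earlier work, write $\Q(\G) = \prod_\eta A_\eta$ with $A_\eta \cong M_{n_\eta}(D_\eta)$ a simple algebra and $F_\eta \colonequals \cent(D_\eta)$. Since $\mathfrak M$ is a maximal $\Lambda(\Gamma_0)$-order, it decomposes compatibly as $\mathfrak M = \prod_\eta \mathfrak M_\eta \cong \prod_\eta M_{n_\eta}(\Delta_\eta)$, with each $\Delta_\eta \subseteq D_\eta$ a maximal $\Lambda(\Gamma_0)$-order. The reduced norm, the centre, and the map $\mathfrak M \cap \Q(\G)^\times \to K_1(\Q(\G))$ all respect this product, so it is enough to treat one component $\eta$ at a time.

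Fix $\eta$. By Morita equivalence, $K_1(A_\eta) = D_\eta^\times / [D_\eta^\times, D_\eta^\times]$, and the EIMC provides a representative $d_\eta \in D_\eta^\times$ with $\nr_{D_\eta/F_\eta}(d_\eta) = \Phi_S^T(\eta)$, the $\eta$-component of the smoothed $L$-function. If the representative can be chosen inside $\Delta_\eta$, then $\diag(d_\eta, 1, \ldots, 1) \in \mathfrak M_\eta$ is invertible in $A_\eta$ (because $d_\eta \neq 0$ in the skew field $D_\eta$), and it exhibits $\Phi_S^T(\eta)$ as a reduced norm from $\mathfrak M_\eta \cap A_\eta^\times$. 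Matching $\Phi_S^T(\eta)$ with smoothed classical $p$-adic $L$-series $G_{\chi,S}(X)$ attached to the characters $\chi$ in the Galois orbit indexing $\eta$, the required integrality follows from Greenberg's theorem: the smoothing by $T$ removes the sole pole coming from the trivial character, and the remaining expression is integral by Deligne--Ribet--Cassou-Nogu\`es (extended by Greenberg via Brauer induction).

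The main obstacle is the precise matching between the Wedderburn-component description of $\Phi_S^T$ on the one hand, and the reduced-norm image of $\Delta_\eta \setminus \{0\}$ in $F_\eta$ on the other. For a maximal order in the skew field $D_\eta$, this image is locally determined by the Hasse invariants of $D_\eta$ at each prime of $F_\eta$; one must verify that Greenberg's character-wise integrality is in fact compatible with these local conditions. It is here that the flexibility of choosing a nonempty smoothing set $T$ is essential, both for neutralising the trivial character and for adjusting Euler factors at primes where $D_\eta$ ramifies.
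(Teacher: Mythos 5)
Your plan correctly identifies the Wedderburn-componentwise strategy and even flags the crucial difficulty at the end, but it does not resolve it, and the route you sketch cannot resolve it: Greenberg's characterwise $p$-adic Artin conjecture only tells you that each component $\Phi_S^T(\chi)$ is an integral power series, i.e. that $\Phi_S^T$ lies in $\cent(\mathfrak M)$. This is the content of the paper's unconditional \cref{cor:Phi-M}, but it is strictly weaker than \cref{integrality-Phi}, which demands that $\Phi_S^T$ be a \emph{reduced norm} of an actual element of $\mathfrak M\cap\Q(\G)^\times$. When the skew field $D_\chi$ has a nontrivial Schur index, the reduced norm $\Sigma_\chi\setminus\{0\}\to\OO_{\cent(D_\chi)}$ is far from surjective, so ``the $\chi$-component is an integral power series'' does not produce a $d_\chi\in\Delta_\chi$ with $\nr(d_\chi)=\Phi_S^T(\chi)$. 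You acknowledge this obstacle but then only assert that ``one must verify'' compatibility with the local Hasse invariants, and it is exactly this verification that constitutes the theorem.

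The second, related gap is that you only invoke the \emph{analytic} half of the EIMC, namely $\nr(\zeta)=\Phi_S^T$. The paper's proof of \cref{thm:integrality} essentially hinges on the \emph{algebraic} half, $\partial(\zeta_S^T)=[Y_S^T]$. Because $Y_S^T$ (after Johnston--Nickel) admits a free resolution $\Lambda(\G)^m\xrightarrow{\alpha}\Lambda(\G)^m\to Y_S^T\to 0$ with $\alpha\in M_m(\Lambda(\G))\cap\GL_m(\Q(\G))$, one has $\partial([\alpha])=\partial(\zeta_S^T)$, and the localisation sequence \eqref{Witte} together with Vaserstein's isomorphism \eqref{eq:Dieudonne-det-iso} reduce the problem to showing that the Dieudonné determinant of the \emph{integral} $m\times m$ matrix $\alpha$ agrees with that of a $1\times 1$ (or $n_\chi\times n_\chi$) matrix over the maximal order $\Sigma_\chi$. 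That step is the dimension-reduction result \cref{2.13}, proved via the noncommutative Weierstra\ss\ theory and the PID property of $\Sigma_\chi[\pi_\sD^{-1}]$; it has no analogue in your argument. Without passing through $\alpha$ and the dimension reduction, you have no source of integral representatives. Finally, your proposal silently works with a single maximal order at a time; the paper's \cref{cor:integrality} handles arbitrary maximal orders by using that they are all conjugate to the standard one and applying dimension reduction once more to the conjugating matrices.
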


The proof of this statement rests upon three pillars: (i) a dimension reduction result, (ii) a description of the Wedderburn decomposition of $\Q(\G)$, and (iii) a smoothed main conjecture. We briefly discuss these ingredients here.

The dimension reduction result goes back to Nichifor and Palvannan \cite{NichiforPalvannan}. They studied integrality of non-smoothed equivariant $p$-adic Artin $L$-functions attached to extensions cut out by mod $p$ characters, with the assumption that $\G$ is a direct product of $H$ and $\Gamma$. A key step in the proof is a dimension reduction statement involving Dieudonné determinants of matrices and employing Venjakob's noncommutative Weierstraß theory \cite{VenjakobWPT}. Our dimension reduction result is an adaptation of their method to the semidirect product case.

Addressing the equivariant $p$-adic Artin conjecture, in particular the reduced norm map, necessitates studying the Wedderburn decomposition of the semisimple artinian ring $\Q(\G)$. If one assumes $\G$ to be a direct product, the representation theory of $\Q(\G)$ is rather straightforward: in this case, the Wedderburn decomposition of $\Q(\G)$ is directly determined by that of $\QQ_p[H]$, and the maximal order in Nichifor--Palvannan's theorem is induced by the unique maximal orders in each of the skew fields in the Wedderburn decomposition.
However, when $\G$ is a semidirect product, the difference between Wedderburn decompositions of $\QQ_p[H]$ and $\Q(\G)$ becomes substantial: for instance, when $\G$ is pro-$p$, all Schur indices for $\QQ_p[H]$ are $1$ due to a theorem of Schilling \cite[(74.15)]{CRII}, whereas $\Q(\G)$ can have nontrivial Schur indices. A general description of the Wedderburn decomposition of $\Q(\G)$ has been carried out in \cite{W}.

In recent work of Johnston and Nickel \cite{NABS}, an $(S,T)$-modified Iwasawa module $Y_S^T$ was introduced. The module $Y_S^T$ admits a free resolution of length one, and one may formulate a smoothed main conjecture relating the class of $Y_S^T$ in $K_0(\Lambda(\G),\Q(\G))$ to a $T$-smoothed version $\Phi_S^T$ of Ritter--Weiss's equivariant $p$-adic Artin $L$-function, postulating the existence of a unique element $\zeta_S^T\in K_1(\Q(\G))$ such that $\nr(\zeta_S^T)=\Phi_S^T$ and $\zeta_S^T$ is mapped to the class of $Y_S^T$ under the connecting homomorphism of relative $K$-theory; see \cref{sEIMCu}. Under the assumption of the main conjecture, the statement of the equivariant $p$-adic Artin conjecture can also be reformulated in terms of $\zeta_S^T$ (\cref{integrality-zeta}).
The algebraic object in the main conjecture poses an important difference between the present work and that of Nichifor--Palvannan: indeed, the Iwasawa module they consider is the Pontryagin dual of a certain non-smoothed Selmer group, and a cohomological criterion due to Greenberg shows that this module only admits a free resolution of length $1$ away from the trivial character. No such restriction applies to the Johnston--Nickel module $Y_S^T$.

The equivariant Iwasawa main conjecture has been proven in the $\mu=0$ case by Ritter--Weiss \cite{RW-MC} and Kakde \cite{Kakde}, as well as in the case when $\G$ has an abelian $p$-Sylow subgroup by Johnston--Nickel \cite{UAEIMC}. In these cases, we obtain the following result:
\begin{corollary*}
	Suppose that either $L^+_\infty/K$ satisfies the $\mu=0$ hypothesis, or that $\G$ has an abelian $p$-Sylow subgroup. Then the equivariant $p$-adic Artin conjecture holds for every maximal $\Lambda(\Gamma_0)$-order in $\Q(\G)$.
\end{corollary*}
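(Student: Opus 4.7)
The plan is to reduce the corollary directly to the main theorem, which establishes the equivariant $p$-adic Artin conjecture for all maximal $\Lambda(\Gamma_0)$-orders in $\Q(\G)$ under the sole assumption of the equivariant Iwasawa main conjecture. Thus it suffices to verify EIMC in each of the two stated cases.

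In the first case, I would invoke the independent works of Ritter--Weiss \cite{RW-MC} and Kakde \cite{Kakde}, who establish EIMC for a general one-dimensional admissible $p$-adic Lie extension under the $\mu=0$ hypothesis on $L_\infty^+/K$. In the second case, when $\G$ has an abelian $p$-Sylow subgroup, I would cite the unconditional EIMC of Johnston--Nickel \cite{UAEIMC}. In either scenario the hypothesis of the main theorem is met, and the conclusion follows immediately.

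The only subtle point to address is that the main theorem is formulated via the smoothed main conjecture (\cref{sEIMCu}), while the references cited above prove the unsmoothed EIMC (\cref{EIMCu}). I expect this to be the closest thing to an obstacle, but it is a purely formal one: passage between the two versions amounts to multiplying the zeta element by the modified Euler factors at primes in $T$, which yield units in $K_1(\Lambda(\G))$ and are compatible with the reduced norm. This comparison is already incorporated in the formulation of the smoothed main conjecture, so no additional input is needed beyond routine bookkeeping in relative $K$-theory. With this verified, the corollary follows at once from the main theorem.
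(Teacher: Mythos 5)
Your proposal is essentially correct and takes the same route as the paper: reduce to the main theorem, verify EIMC via Ritter--Weiss/Kakde in the $\mu=0$ case and via Johnston--Nickel in the abelian $p$-Sylow case, and pass from the unsmoothed EIMC to the smoothed version \cref{sEIMCu} on which \cref{thm:integrality} and \cref{cor:integrality} actually rest. The paper's \cref{EIMCu-implies-sEIMCu} is exactly the bridge you allude to, and it is indeed "routine bookkeeping in relative $K$-theory."

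One small but worth-flagging imprecision: you claim that the smoothing factors $(1-\phi_{w_\infty})$ for $v\in T$ "yield units in $K_1(\Lambda(\G))$." That is not quite right. These elements lie in $\Lambda(\G)$, but they are only invertible in $\Q(\G)$, not in $\Lambda(\G)$; they therefore give classes in $K_1(\Q(\G))$, and indeed have nontrivial boundary under $\partial$ (namely the classes of the induced modules of $\ZZ_p$ at decomposition groups, cf.\ \eqref{eq:Lemma-8.4-plus}). If they were in $K_1(\Lambda(\G))$, the two boundary equalities in \cref{EIMCu} and \cref{sEIMCu} would coincide trivially, which is not the case; the actual comparison requires matching these boundary contributions against the difference between $[C_S^\bullet]$ and $[Y_S^T]$ as in \eqref{eq:C-bullet-rep}. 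What is relevant for integrality is rather that $(1-\phi_{w_\infty})\in\Lambda(\G)\subseteq\mathfrak{M}$, so the smoothing factors pose no obstruction to the image being in $\mathfrak{M}\cap\Q(\G)^\times$. With that clarification the argument goes through exactly as you describe.
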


\subsection*{Special case: \texorpdfstring{$p$}{p}-abelian extensions}
The equivariant $p$-adic Artin conjecture is stated for all $\Lambda(\Gamma_0)$-orders $\mathfrak M$ in $\Q(\G)$ containing $\Lambda(\G)$. The smaller $\mathfrak M$ is, the stronger this statement becomes, the extreme being $\mathfrak M=\Lambda(\G)$. While proving this in general seems to be out of reach at the present juncture, it can be proven in the so-called $p$-abelian case, in which it is an easy corollary of a theorem of Johnston and Nickel \cite{UAEIMC}.

\begin{corollary*}
	If $p$ does not divide the order of the commutator subgroup of $\G$, then $\Phi_S^T\in\cent(\Lambda( \G))$. Moreover, {the equivariant $p$-adic Artin conjecture} holds with $\mathfrak M=\Lambda( \G)$.
\end{corollary*}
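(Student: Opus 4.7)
The plan is to deduce the corollary from the Main Theorem by verifying that, under the $p$-abelian hypothesis, $\Lambda(\G)$ itself is already a maximal $\Lambda(\Gamma_0)$-order in $\Q(\G)$; once this is known, one may take $\mathfrak M = \Lambda(\G)$ in the Main Theorem.

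First I would note that $p\nmid |[\G, \G]|$ implies that $\G$ admits an abelian pro-$p$ Sylow subgroup: any pro-$p$ subgroup maps injectively into $\G^{\ab}=\G/[\G,\G]$, because its kernel would be a pro-$p$ subgroup of a finite group of order coprime to $p$, hence trivial; and $\G^{\ab}$ is abelian. Thus the EIMC is available via Johnston--Nickel \cite{UAEIMC}, so the hypothesis of the Main Theorem is satisfied in our case.

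For the maximality claim, set $G'=[\G,\G]$, a finite subgroup of $H$ of order prime to $p$. The group ring $\ZZ_p[G']$ is a maximal $\ZZ_p$-order in $\QQ_p[G']$ by a Maschke-type argument. After base change to a sufficiently large unramified extension $\OO$ of $\ZZ_p$, the primitive central idempotents of $\OO[G']$ decompose $\Lambda(\G)\otimes_{\ZZ_p}\OO$ into blocks, each of which is a matrix algebra over a commutative Iwasawa algebra of the form $\OO'\llb T\rrb$. Such commutative Iwasawa algebras are regular and hence maximal orders in their total rings of quotients; matrix algebras over maximal orders are themselves maximal; and maximality descends along the faithfully flat base change $\OO/\ZZ_p$. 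The detailed analysis of the Wedderburn decomposition of $\Q(\G)$ carried out in \cite{W} should supply the necessary structural input.

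With both ingredients in place, the Main Theorem applied with $\mathfrak M=\Lambda(\G)$ furnishes an element $u\in\Lambda(\G)\cap\Q(\G)^\times$ with $\nr(u)=\Phi_S^T$. Since the reduced norm is a polynomial with integral coefficients in the matrix entries, $\nr(u)\in\cent(\Lambda(\G))$, yielding both assertions of the corollary simultaneously. The main anticipated difficulty is the maximal order claim itself in the semidirect product setting, where one must carefully track how the $\Gamma$-action on $H$ interacts with the idempotents of $\OO[G']$; the results of \cite{W} are tailored for precisely such manipulations.
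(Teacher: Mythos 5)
Your reduction to the Main Theorem via a maximality claim has a fatal gap: it is \emph{false} in general that $\Lambda(\G)$ is a maximal $\Lambda(\Gamma_0)$-order in $\Q(\G)$ when $\G$ is $p$-abelian. Consider $\G = \ZZ/p\ZZ \times \Gamma$, which is abelian and in particular $p$-abelian with trivial commutator subgroup. Here $\Lambda(\G) = \ZZ_p[\ZZ/p\ZZ]\llb\Gamma\rrb$, and $\QQ_p[\ZZ/p\ZZ]\simeq \QQ_p\oplus\QQ_p(\zeta_p)$, but $\ZZ_p[\ZZ/p\ZZ]$ is a (non-regular, non--integrally-closed) local ring strictly contained in the maximal order $\ZZ_p\oplus\ZZ_p[\zeta_p]$. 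Consequently $\Lambda(\G)\subsetneq \ZZ_p\llb\Gamma\rrb\oplus\ZZ_p[\zeta_p]\llb\Gamma\rrb$, so $\Lambda(\G)$ is not maximal, and you cannot take $\mathfrak M = \Lambda(\G)$ in the Main Theorem. Your intermediate claim that the blocks are of the form $M_{n}(\OO'\llb T\rrb)$ with $\OO'\llb T\rrb$ regular also fails in this example: the single block is $\ZZ_p[\ZZ/p\ZZ]\llb\Gamma\rrb$, which is not regular.

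The actual mechanism behind the corollary is different: the paper invokes the structural result $\Lambda(\G)\simeq\bigoplus_j M_{n_j}(R_j)$ with $R_j$ complete commutative \emph{local} (not necessarily integrally closed) rings, which holds if and only if $\G$ is $p$-abelian. On each block the reduced norm is the ordinary commutative determinant $M_{n_j}(R_j)\to R_j$, so it lands in $\cent(\Lambda(\G))$ regardless of whether the $R_j$ are maximal orders. The ``moreover'' part is then proved by an explicit construction: for any $z\in\cent(\Lambda(\G))$ one exhibits a block-diagonal preimage $\tilde z$ with $j$th block $\diag(z_j,1,\dots,1)$, showing the reduced norm is surjective onto $\cent(\Lambda(\G))$ from $\Lambda(\G)\cap\Q(\G)^\times$. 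Your approach cannot supply such a preimage, because without maximality the cited Main Theorem simply does not apply to $\mathfrak M=\Lambda(\G)$. The observations you make at the start (that $p$-abelian implies abelian $p$-Sylow, hence EIMC is available) are correct and do appear in the paper, but they are not the crux.
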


\subsection*{Outline}
This paper is structured as follows. \Cref{sec:prelims} recalls the algebraic background used in the sequel. \Cref{sec:smoothed-eq-p-adic-Artin-L} is of an analytic nature: we recall the definitions of characterwise and equivariant $p$-adic Artin $L$-functions, and define a smoothed version. We formulate the equivariant $p$-adic Artin conjecture in \cref{sec:conjectures}. In \cref{sec:MC}, we recall the relevant algebraic object, and state a smoothed equivariant main conjecture. We use this in \cref{sec:generalities-on-integrality} to study the behaviour of the equivariant $p$-adic Artin conjecture under change of the input data. \Cref{sec:dimension-reduction} is a generalisation of Nichifor--Palvannan's dimension reduction result. Finally in \cref{sec:known-results}, we prove the equivariant $p$-adic Artin conjecture in the cases mentioned above.

The paper contains results obtained in Chapters 1 and 4 of the author's doctoral thesis \cite{thesis}.

\subsection*{Funding} The results in the aforementioned thesis were obtained while the author was a member of the Research Training Group 2553 at the Essen Seminar for Algebraic Geometry and Arithmetic (ESAGA), funded by the Deutsche Forschungsgemeinschaft (DFG, project no.~412744520).

\subsection*{Acknowledgements} The author wishes to extend his gratitude to Bharathwaj Palvannan for hosting him at IISc Bengaluru in November 2022, and for many helpful discussions. Furthermore, the author thanks Andreas Nickel for his guidance and support throughout his doctoral studies, as well as for his comments on a draft version of this article.

\subsection*{Notation and conventions} The letter $p$ always stands for an odd prime number. We fix an algebraic closure $\QQ_p^\al$ of $\QQ_p$.

The word ``ring'' is short for ``not necessarily commutative ring with unity''. A domain is a ring with no zero divisors; the term ``integral domain'' is reserved for commutative domains. A principal ideal domain (PID) is a domain such that all left ideals and all right ideals are principal; in particular PIDs are not necessarily commutative.  If $R$ is a ring, $\Quot(R)$ stands for the total ring of quotients of $R$, which is obtained from $R$ by inverting all central regular elements; when $R$ is an integral domain, this is the field of fractions, and to emphasise this, we use the notation $\Frac(R)$ instead.

For a ring $R$, the ring of $n\times n$ matrices is $M_n(R)$. The $n\times n$ identity matrix is $\mathbf 1_n$, {whereas the trivial character is $\one$}.
The centre of a group $G$ resp. a ring $R$ is denoted by $\cent(G)$ resp. $\cent(R)$.
If $S$ is a finite set, then $\# S$ denotes its cardinality.
For $n\ge1$, $\mu_n$ stands for the group of $n$th roots of unity. For a field $F$, we write $\mu(F)$ for the group of roots of unity in $F$.

In our notation, we make a clear distinction between rings of power series and completed group algebras: we use double square brackets for the former and blackboard square brackets for the latter. E.g. $\ZZ_p[[X]]$ is a ring of power series, and $\ZZ_p\llb \Gamma\rrb$ is a completed group algebra. If $\G$ is a profinite group, then we write $\Irr(\G)$ for the set of $\QQ_p^\al$-valued irreducible characters of $\G$ with open kernel.

We will abuse notation by writing $\oplus$ for a direct product of rings, even though this is not a coproduct in the category of rings.

\section{Algebraic preliminaries} \label{sec:prelims}
This section consists of a recollection of algebraic concepts used in subsequent sections.
\subsection{Iwasawa algebras, characters and idempotents} \label{sec:Iwasawa-prelim}
Let $\G$ be a profinite group isomorphic to $H\rtimes\Gamma$, where $H$ is a finite group and $\Gamma\simeq\ZZ_p$. The Iwasawa algebra of $\G$ over $\ZZ_p$ is
$\Lambda(\G)\colonequals \ZZ_p\llb \G\rrb$.
Let $\Q(\G)\colonequals\Quot(\ZZ_p\llb\G\rrb)$ denote its total ring of quotients. We will also consider $\Q^\al(\G)\colonequals \QQ_p^\al\otimes_{\QQ_p}\Q(\G)$. These algebras are defined analogously for $\G$ an arbitrary profinite group. The ring $\Q(\G)$ is semisimple artinian, as shown in the proof of \cite[Proposition~5(1)]{TEIT-II}. On the other hand, the Iwasawa algebra $\Lambda(\G)$ is never semisimple.

Let $\Gamma_0\colonequals\Gamma^{p^{n_0}}$ where $n_0$ is chosen such that $\Gamma_0\subseteq\G$ is central. This is the case when $n_0$ is large enough: indeed, since $\G$ is a semidirect product, failure of $\Gamma$ to be central comes from the homomorphism $\phi:\Gamma\to\Aut(H)$ defining conjugation by elements of $\Gamma$ not being trivial. However, since $H$ is finite, so is $\Aut(H)$, wherefore $\phi$ has open kernel in $\Gamma$. All such open subgroups are of the form $\Gamma^{p^n}$ for some $n\ge0$. Therefore if $\ker(\phi)=\Gamma^{p^n}$ then $\Gamma^{p^{n_0}}$ is central in $\G$ whenever $n_0\ge n$. Fix a topological generator $\gamma$ of $\Gamma$, and let $\gamma_0\colonequals\gamma^{p^{n_0}}$.

As $\Q(\G)$ is semisimple, it can be studied further by investigating its Wedderburn decomposition, which leads to considering characters of $\G$ and $H$. Let $\Irr(\G)$ denote the set of absolutely irreducible $\QQ_p^\al$-valued characters of $\G$ with open kernel, and let $\chi\in\Irr(\G)$. Let $\eta\mid\res^\G_H\chi$ be an irreducible constituent of the restriction of $\chi$ to $H$.

Following \cite[\S1]{NickelConductor}, let us define the fields
\begin{align}
	\QQ_{p,\chi} &\colonequals \QQ_p(\chi(h): h\in H) \label{eq:Qpchi};\\
	\QQ_p(\eta) &\colonequals \QQ_p(\eta(h): h\in H) \label{eq:Qpeta}.
\end{align}
Write $w_\chi\colonequals(\G:\G_\eta)$ for the index of the stabiliser $\G_\eta$ of $\eta$ in $\G$. This is a power of $p$ since $H$ stabilises $\eta$. Moreover, neither $\QQ_p(\eta)$ nor $w_\chi$ depend on the choice of the irreducible constituent $\eta$.
The field $\QQ_p(\eta)$ is an extension of $\QQ_{p,\chi}$.
Furthermore, the field $\QQ_p(\eta)$, and thus also the subfield $\QQ_{p,\chi}$, is abelian over $\QQ_p$, as it is contained in some cyclotomic extension of $\QQ_p$. For $g\in \G$ and $\sigma\in\Gal(\QQ_p(\eta)/\QQ_p)$, the characters ${}^g\eta$ resp. ${}^\sigma\eta$ of $H$ are given by {${}^g\eta(h)=\eta(g^{-1}hg)$} resp. ${}^\sigma\eta(h)=\sigma(\eta(h))$.

\subsection{Skew power series rings} \label{sec:skew-power-series-rings}
Let $R$ be a noetherian pseudocompact ring, that is, it is a complete Hausdorff noetherian topological ring with a fundamental system $(\mathscr L_i)_{i\in I}$ of open neighbourhoods of zero such that $\mathscr L_i\subseteq  R$ is a left ideal and each $ R/\mathscr L_i$ is a finite length $ R$-module. 
Let $\sigma\in\End R$ be a ring endomorphism, and let $\delta: R\to  R$ be a $\sigma$-derivation, that is, a homomorphism of additive groups such that for all $r,s\in R$, $\delta(rs)=\delta(r)s+\sigma(r)\delta(s)$.
Suppose that $\delta$ is $\sigma$-nilpotent. We refrain from giving the definition of $\sigma$-nilpotency in general. In the case when $\sigma$ and $\delta$ commute, which is the only case we shall encounter in this work, this is equivalent to $\delta$ being topologically nilpotent, that is, for all $n\ge1$ there {being} an $m\ge1$ such that for all $k\ge m$, there is an inclusion $\delta^k(R)\subseteq (\rad  R)^n$, where $\rad R$ is the Jacobson radical of $ R$.

For such a ring $R$, endomorphism $\sigma$, and derivation $\delta$, one can define the formal skew power series ring $ R[[X;\sigma,\delta]]$ as the ring with underlying additive group $ R[[X]]$ and multiplication defined by $Xr=\sigma(r)X+\delta(r)$ for $r\in R$. Nilpotence of $\delta$ is needed to ensure that multiplication is well-defined, that is, the coefficients of any product converge in $ R$. We refer to \cite[\S\S0--1]{SchneiderVenjakob} for details. In the sequel, we drop the adjective ``formal''.

Let $R$ be a local ring with maximal ideal $\mm=\rad R$ such that $ R$ is separated and complete with respect to the $\mm$-adic topology. Then skew power series over $ R$ possess a Weierstraß theory, as exhibited in \cite[\S3]{VenjakobWPT}. For a skew power series $f(X)=\sum_{i=0}^\infty a_i X^i\in  R[[X;\sigma,\delta]]$, let 
\begin{equation*}\label{eq:ordred}
	\ordred(f)\colonequals\inf\{i:a_i\in R^\times\}\in \ZZ_{\ge0}\cup\{\infty\}
\end{equation*}
be the lowest degree term with an invertible coefficient, called the reduced order of $f$. Then the following hold:

\begin{theorem}[Weierstraß division theorem] \label{WDT}
	If $f\in R[[X;\sigma,\delta]]$ has finite reduced order, then
	\[ R[[X;\sigma,\delta]]= R[[X;\sigma,\delta]] f\oplus\bigoplus_{i=0}^{\ordred f-1}  R[[X;\sigma,\delta]] X^i.\]
\end{theorem}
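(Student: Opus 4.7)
The plan is to mimic the classical proof of the Weierstrass division theorem, replacing complex-analytic estimates with convergence in the $\mm$-adic topology on $\sR[[X;\sigma,\delta]]$. Set $d \colonequals \ordred(f)$. Every $g = \sum_j c_j X^j \in \sR[[X;\sigma,\delta]]$ admits a unique $\sR$-linear decomposition $g = \pi(g) + \tau(g) \cdot X^d$, where $\pi(g) \colonequals \sum_{j<d} c_j X^j$ collects the low-order terms and $\tau(g) \colonequals \sum_{j\ge 0} c_{j+d} X^j$ is the right shift. The hypothesis $\ordred(f) = d$ says precisely that $\pi(f)$ has all coefficients in $\mm$, while $\tau(f)$ is a unit of $\sR[[X;\sigma,\delta]]$: its constant term $a_d$ is a unit of $\sR$, and units in a skew power series ring over a complete local base are exactly those elements with invertible constant term.

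For existence of a decomposition $g = qf + r$ with $r$ a polynomial of degree $<d$, I would build $q$ by iteration. Setting $q_0 \colonequals 0$ and $g_0 \colonequals g$, define $q_{n+1} \colonequals q_n + \tau(g_n) \cdot \tau(f)^{-1}$ and $g_{n+1} \colonequals g - q_{n+1} f$. A short computation using $f = \pi(f) + \tau(f) X^d$ gives $g_{n+1} = \pi(g_n) - \tau(g_n)\tau(f)^{-1}\pi(f)$; since $\pi(f)$ has coefficients in $\mm$, the new tail $\tau(g_{n+1})$ lies deeper in $\mm$ at each step, and iterating yields $\tau(g_n) \in \mm^n \cdot \sR[[X;\sigma,\delta]]$. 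By $\mm$-adic completeness the telescoping sum $q \colonequals \lim_n q_n$ converges, and by construction $r \colonequals g - qf$ satisfies $\tau(r) = 0$, hence $r \in \bigoplus_{i<d} \sR \cdot X^i$.

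For uniqueness, I would show that the two summands intersect trivially. If $qf$ lies in $\bigoplus_{i<d} \sR \cdot X^i$ with $q\neq 0$, set $j_0 \colonequals \ordred(q)$, so that the coefficient $b_{j_0}$ of $X^{j_0}$ in $q$ is a unit in $\sR$. Expanding $qf$ using $Xr = \sigma(r)X + \delta(r)$ and reducing modulo $\mm \cdot \sR[[X;\sigma,\delta]]$, the $X^{j_0+d}$-coefficient is congruent to $b_{j_0} \cdot \sigma^{j_0}(a_d)$, a unit in $\sR/\mm$; this contradicts $qf$ having no terms of degree $\ge d$. The main obstacle is the bookkeeping in the noncommutative setting: neither $\pi$ nor $\tau$ commutes with multiplication, and the relation $Xr = \sigma(r)X + \delta(r)$ generates lower-order correction terms at each step. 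The topological nilpotence of $\delta$ (equivalent to $\sigma$-nilpotence, since $\sigma$ and $\delta$ commute here) is exactly what keeps these corrections compatible with the $\mm$-adic filtration, so that the iterative scheme genuinely converges.
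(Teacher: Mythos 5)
The paper does not prove this theorem; it cites \cite[\S3]{VenjakobWPT}, where (as in your sketch) the division is established by successive approximation, so your overall approach is the right one. The existence half is essentially sound: the recursion $g_{n+1}=\pi(g_n)-\tau(g_n)\tau(f)^{-1}\pi(f)$ is correct, and it contracts along the $\mm$-adic filtration once one knows $\sigma(\mm)\subseteq\mm$ and $\delta(\sR)\subseteq\mm$. Do be careful about what guarantees the latter inclusion: it is a standing hypothesis in Venjakob's treatment, not a formal consequence of the $\sigma$-nilpotence of $\delta$, which only constrains \emph{iterates} of $\delta$. Your closing appeal to topological nilpotence is therefore slightly misattributed, although the inclusion does hold for the rings $\Sigmachi$ actually used in the paper.

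The genuine gap is in the uniqueness step. From $q\neq 0$ you set $j_0\colonequals\ordred(q)$ and use that $b_{j_0}$ is a unit; but $\ordred$ as defined here is the least index with a \emph{unit} coefficient, and for nonzero $q$ it can be $\infty$ — e.g.\ $q=p$ in $\ZZ_p[[X]]$. In that case your leading-term computation modulo $\mm$ sees nothing. To close this you need to run the same computation up the $\mm$-adic filtration: show by induction on $k\ge 0$ that if $q\in\mm^k\sR[[X;\sigma,\delta]]$ and $qf$ has $X$-degree $<d$, then in fact $q\in\mm^{k+1}\sR[[X;\sigma,\delta]]$. The inductive step is precisely your calculation, carried out in $\mm^k/\mm^{k+1}$ instead of $\sR/\mm$: take $j_0$ to be the least index whose coefficient lies outside $\mm^{k+1}$, note that the $\delta$-corrections drop into $\mm^{k+1}$ because $\delta(\sR)\subseteq\mm$, and use that right multiplication by the unit $\sigma^{j_0}(a_d)$ is injective on the $\sR/\mm$-module $\mm^k/\mm^{k+1}$ since $\sR/\mm$ is a skew field. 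Concluding $q\in\bigcap_k\mm^k\sR[[X;\sigma,\delta]]=0$ then uses the Hausdorffness of $\sR$, which in your sketch is invoked only for the convergence of the existence iteration, whereas it is just as essential for uniqueness.
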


\begin{theorem}[Weierstraß preparation theorem] \label{WPT}
	If $f\in R[[X;\sigma,\delta]]$ has finite reduced order, then there is a unique unit $\epsilon\in R[[X;\sigma,\delta]]^\times$ and a unique distinguished skew polynomial $F\in  R[X;\sigma,\delta]$ such that $f=\epsilon F$.
\end{theorem}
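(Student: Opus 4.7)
The plan is to follow the classical strategy: apply the Weierstraß division theorem to the monomial $X^n$ where $n\colonequals\ordred(f)$, and extract $\epsilon$ and $F$ from the resulting quotient and remainder. By \cref{WDT}, there exist $q\in\sR[[X;\sigma,\delta]]$ and a polynomial $r=\sum_{i=0}^{n-1}r_iX^i$ of degree less than $n$ such that $X^n=qf+r$. I would then set $F\colonequals X^n-r$ and $\epsilon\colonequals q^{-1}$, yielding $f=\epsilon F$ provided that $q$ is invertible and $F$ is distinguished.

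Both of these remaining facts would be verified by reducing modulo the maximal ideal $\mm$. The endomorphism $\sigma$ and the $\sigma$-nilpotent derivation $\delta$ descend to the residue skew field $\bar\sR\colonequals\sR/\mm$, inducing a surjection $\sR[[X;\sigma,\delta]]\twoheadrightarrow\bar\sR[[X;\bar\sigma,\bar\delta]]$. By definition of the reduced order, $\bar f=\bar u\cdot X^n$ where $\bar u\colonequals\sum_{j\ge0}\bar a_{n+j}X^j$, and $\bar u$ is a unit in $\bar\sR[[X;\bar\sigma,\bar\delta]]$ since its constant term $\bar a_n$ is nonzero in the skew field $\bar\sR$. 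Reducing $X^n=qf+r$ gives the decomposition $X^n=\bar q\bar f+\bar r$ in the residue ring, which must agree with the trivial decomposition $X^n=\bar u^{-1}\bar f+0$ by the uniqueness of the direct sum in \cref{WDT}. Hence $\bar r=0$, so every $r_i$ lies in $\mm$ and $F=X^n-r$ is distinguished; and $\bar q=\bar u^{-1}$ is a unit, whose constant term is a unit of $\bar\sR$, so the constant term of $q$ is a unit of $\sR$, whence $q\in\sR[[X;\sigma,\delta]]^\times$ by the standard characterisation of units in skew power series rings.

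For uniqueness, suppose $f=\epsilon_1F_1=\epsilon_2F_2$ with $F_i$ distinguished of degree $n_i$ and $\epsilon_i$ units. Reduction modulo $\mm$ shows $\ordred(f)=n_1=n_2\eqqcolon n$. Setting $u\colonequals\epsilon_2^{-1}\epsilon_1$, one has $uF_1=F_2$, and since $F_2-F_1$ is a polynomial of degree $<n$ (both $F_i$ being monic of degree $n$), the two equalities $F_2=uF_1+0=1\cdot F_1+(F_2-F_1)$ are both valid decompositions according to the direct sum in \cref{WDT} with divisor $F_1$. Uniqueness then forces $u=1$ and $F_1=F_2$, hence also $\epsilon_1=\epsilon_2$. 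The main technical point, which I would extract from Venjakob's original paper, is twofold: that $\sigma$ and $\delta$ really do descend to $\bar\sR$ (requiring $\sigma(\mm),\delta(\mm)\subseteq\mm$, which uses $\sigma$-nilpotency), and that units in $\sR[[X;\sigma,\delta]]$ are characterised by their constant term (which rests on $\mm+X\sR[[X;\sigma,\delta]]$ being contained in the Jacobson radical of the skew power series ring).
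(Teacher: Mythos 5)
The paper does not give its own proof of \cref{WPT}; both \cref{WDT} and \cref{WPT} are taken from \cite[\S3]{VenjakobWPT} and are stated as black boxes. There is therefore nothing internal to compare against, and I can only assess your argument on its merits. It is the classical derivation of preparation from division, and the outline is correct: apply \cref{WDT} to $X^n$ with $n=\ordred(f)$ to get $X^n=qf+r$ with $r$ a polynomial of degree $<n$; set $F=X^n-r$ and $\epsilon=q^{-1}$; reduce mod $\mm$ to see that $\bar r=0$ (so $F$ is distinguished) and that $q_0\in\sR^\times$ (so $q$ is a unit); the uniqueness argument via comparing the two decompositions of $F_2$ against the divisor $F_1$ is also exactly right. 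Two small remarks. First, note that you are silently reading \cref{WDT} as asserting $\sR[[X;\sigma,\delta]]=\sR[[X;\sigma,\delta]]f\oplus\bigoplus_{i=0}^{\ordred f-1}\sR X^i$; that is the intended statement (and the one used later in the proof of \cref{Sigmachi-ncPID}), but as printed the summands are written with a spurious $\sR[[X;\sigma,\delta]]$ factor. Second, the justification that $\sigma$ and $\delta$ descend to $\bar\sR$ is not quite attributable to $\sigma$-nilpotency, which is a hypothesis on $\delta$ alone: $\sigma(\mm)\subseteq\mm$ is part of Venjakob's standing assumptions (in the setting of this paper $\sigma=\tau$ is an automorphism of a local ring, so $\sigma(\mm)=\mm$), and then $\delta(\mm)=(\tau-\id)(\mm)\subseteq\mm$ follows. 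The nilpotency of $\delta$ is instead what guarantees that $\bar\delta$ is nilpotent on the residue skew field, so that the skew power series ring $\bar\sR[[X;\bar\sigma,\bar\delta]]$ is defined and \cref{WDT} can be applied there; you do in fact use this when you invoke uniqueness of the decomposition of $X^n$ over the residue ring. With these points straightened out, the proof is complete and correct.
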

Here a monic skew polynomial being distinguished means that all coefficients but the leading one are contained in the maximal ideal $\mm$.

\subsection{Algebraic \texorpdfstring{$K$}{K}-theory}
We recall the relevant $K$-groups used in equivariant Iwasawa theory, mostly following the exposition in \cite[\S4.1]{hybrid}.
Let $ R$ be a noetherian integral domain, $ A$ a finite dimensional algebra over $\Frac( R)$, and $\AAA$ an $ R$-order in $ A$. Later we will set these to be $ R\colonequals\Lambda(\Gamma_0)$, $ A\colonequals\Q(\G)$ and $\AAA\colonequals\Lambda(\G)$.

The groups $K_0(\AAA)$ resp. $K_0( A)$ are the Grothendieck groups of the abelian categories $\PPP(\AAA)$ resp. $\PPP( A)$ of finitely generated projective left $\AAA$- resp. $ A$-modules; see \cite[\S1]{Sujatha} for details. We let $K_1(\AAA)$ resp. $K_1( A)$ denote the Whitehead groups of the rings $\AAA$ resp. $ A$, as defined e.g. in {\cite[\S40]{CRII}}.

Let $\CCCb(\PPP(\AAA))$  be the category of bounded complexes of finitely generated projective $\AAA$-modules, and let $\CCCbt(\PPP(\AAA))$ be the full subcategory of such complexes with $ R$-torsion cohomology groups. We define the relative $K_0$-group $K_0(\AAA, A)$ to be the abelian group generated by the objects of $\CCCbt(\PPP(\AAA))$, with the relations being $[C^\bullet]=0$ for $C^\bullet$ acyclic, and $[C^\bullet]=[C'^\bullet]+[C''^\bullet]$ for short exact sequences $0\to C'^\bullet\to C^\bullet\to C''^\bullet\to0$ in $\CCCbt(\PPP(\AAA))$. In fact, $\CCCbt(\PPP(\AAA))$ is a Waldhausen category, and $K_0(\AAA, A)=K_0(\CCCbt(\PPP(\AAA)))$; see \cite[II, Definition~9.1.2 and p.~185]{Kbook}.

An equivalent definition of the relative $K_0$-group is to let $K_0(\AAA, A)$ be the abelian group generated by triples $(P,\phi,Q)$ where $P$ and $Q$ are finitely generated projective $\AAA$-modules and $\phi: A\otimes_\AAA P\xrightarrow\sim A\otimes_\AAA Q$ is an isomorphism of $ A$-modules. The relations are
\begin{align*}
	(P,\phi,Q)+(Q,\psi,R)&=(P,\psi\phi,R), \\
	(P',\phi',Q')+(P'',\phi'',Q'')&=(P,\phi,Q),
\end{align*}
where $P'\hookrightarrow P\twoheadrightarrow P''$ and $Q'\hookrightarrow Q\twoheadrightarrow Q''$ are exact sequences of $\AAA$-modules, and the diagram
\[
\begin{tikzcd}
	 A\otimes_\AAA P' \arrow[d, "\sim", "\phi'"'] \arrow[r] &  A\otimes_\AAA P \arrow[d, "\sim", "\phi"'] \arrow[r] &  A\otimes_\AAA P'' \arrow[d, "\sim", "\phi''"'] \\
	 A\otimes_\AAA Q' \arrow[r]                   &  A\otimes_\AAA Q \arrow[r]                   &  A\otimes_\AAA Q''                  
\end{tikzcd}
\]
is commutative. See \cite[II, Definition~2.10]{Kbook} for more on this. An isomorphism between this group and the one defined above is given by
\begin{align} \label{eq:K0-rel-K0-iso}
	[(P,\phi,Q)] &\mapsto \Bigcomplex{P}{-2}{\phi}{Q}{-1}
\end{align}
on triples $(P,\phi,Q)$ for which $\phi$ maps $P$ to $Q$.
Here the complex on the right is concentrated in degrees $-2$ and $-1$.
For further details and other definitions, we refer the reader to \cite[\S2]{Sujatha}.

The aforementioned $K$-groups are connected by the localisation exact sequence of $K$-theory {\cite[(40.9)]{CRII}}:
\begin{equation}\label{eq:localisation-K}
	K_1(\AAA)\to K_1( A)\xrightarrow{\partial}K_0(\AAA, A)\to K_0(\AAA)\to K_0( A).
\end{equation}
The first resp. last arrows are induced by the inclusion $\AAA\to A$ using functoriality of Whitehead resp. Grothendieck groups. The connecting homomorphism is given as follows: if $M\in \GL_n( A)$, and $\widetilde M\in \End(\AAA^n)$ such that $1_ A\otimes \widetilde M=M$, then
\begin{equation} \label{eq:def-partial}
	\partial([M])\colonequals\Bigcomplex{\AAA^n}{-2}{\widetilde M}{\AAA^n}{-1},
\end{equation}
where the complex is in degrees $-2$ and $-1$. Matrices $M$ for which such an $\widetilde{M}$ exists generate $K_1(A)$: {indeed, for any  $X\in\GL_n(A)$, there is a $0\ne d\in R$ such that $dX\in M_n(\mathfrak A)\cap\GL_n(A)$ by clearing denominators; then $[X]=[dX]\cdot [d^{-1}\mathbf 1_n]$, with $dX\in M_n(\mathfrak A)$ and $d^{-1}\mathbf 1_n$ being the inverse of an element in $M_n(\mathfrak A)$.}
Finally, the third arrow {in \eqref{eq:localisation-K}} sends a complex $[P\xrightarrow{\phi} Q]$ concentrated in degrees $-2$ and $-1$ to $[P]-[Q]$.

In the case $ R\colonequals\Lambda(\Gamma_0)$, $ A\colonequals\Q(\G)$ and $\AAA\colonequals\Lambda(\G)$, the connecting homomorphism $\partial$ is surjective by \cite[Corollary~3.8]{Witte}; see \cite[\S4.1]{hybrid} for further comments on this. Therefore the localisation sequence takes the following form:
\begin{equation} \label{Witte}
	K_1(\Lambda(\G))\to K_1(\Q(\G))\xrightarrowdbl\partial K_0(\Lambda(\G),\Q(\G))\hookrightarrow K_0(\Lambda(\G))\to K_0(\Q(\G)).
\end{equation}

Finally, we recall some derived categorical language, which will make the formulation of some statements easier. We let $\DC(\AAA)$ denote the derived category of $\AAA$-modules. A complex of $\AAA$-modules is perfect if, considered as an element of $\DC(\AAA)$, it is isomorphic to a complex in $\CCCb(\PPP(\AAA))$. The full subcategory of perfect complexes with $ R$-torsion cohomology modules will be denoted by $\DCpt(\AAA)$. With each object of $\DCpt(\AAA)$, one can naturally associate an element in $K_0(\AAA, A)$. In the notation $\DCpt(\AAA)$, the domain $ R$ is suppressed; in this work, we will always have $ R\colonequals\Lambda(\Gamma_0)$ and $\AAA\colonequals\Lambda(\G)$, so this should not lead to confusion.

\subsection{Maximal orders}
We will use the fact that maximal orders behave well with respect to taking matrix rings or direct sums:
\begin{proposition}[{\cite[Theorem~8.7]{MO}}] \label{MO8.7}
	Let $ R$ be a noetherian integrally closed integral domain with field of fractions $ F$. Then if $\Delta$ is a maximal $ R$-order in a separable $ F$-algebra $ A$, then the matrix ring $M_n(\Delta)$ is a maximal $ R$-order in $M_n( A)$ for all $n\ge1$.
\end{proposition}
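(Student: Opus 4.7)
The plan is to exploit the standard correspondence between $\sR$-orders in $M_n(\sA)$ containing $M_n(\Delta)$ and $\sR$-orders in $\sA$ containing $\Delta$, implemented via the matrix units $E_{ij}$.

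First, I would verify that $M_n(\Delta)$ is itself an $\sR$-order in $M_n(\sA)$: it is a finitely generated $\sR$-module (a direct sum of $n^2$ copies of $\Delta$), contains the identity matrix, and satisfies $\sF\cdot M_n(\Delta)=M_n(\sA)$. Separability of $M_n(\sA)$ over $\sF$ is inherited from that of $\sA$, so the notion of $\sR$-order in $M_n(\sA)$ is well-posed.

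Now let $\Lambda$ be any $\sR$-order in $M_n(\sA)$ with $M_n(\Delta)\subseteq\Lambda$; the claim is that $\Lambda = M_n(\Delta)$. Define
\[
\Delta' \colonequals \{a\in\sA : aE_{11}\in\Lambda\}.
\]
Closure of $\Delta'$ under addition is clear, and closure under multiplication is a consequence of $(aE_{11})(bE_{11}) = ab\,E_{11}$, which uses $E_{11}^2=E_{11}$. The inclusion $\Delta\cdot E_{11}\subseteq M_n(\Delta)\subseteq\Lambda$ gives $\Delta\subseteq\Delta'$, so $\Delta'$ already contains an $\sF$-basis of $\sA$. The map $a\mapsto aE_{11}$ identifies $\Delta'$ with an $\sR$-submodule of the finitely generated $\sR$-module $\Lambda$; noetherianity of $\sR$ then forces $\Delta'$ to be finitely generated over $\sR$ as well. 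Hence $\Delta'$ is an $\sR$-order in $\sA$ containing $\Delta$, and the maximality of $\Delta$ yields $\Delta' = \Delta$.

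Finally, given any $M=(m_{ij})\in\Lambda$, observe that $E_{1i}, E_{j1}\in M_n(\Delta)\subseteq\Lambda$, so the product $E_{1i}ME_{j1} = m_{ij}E_{11}$ lies in $\Lambda$; thus $m_{ij}\in\Delta' = \Delta$, and $M\in M_n(\Delta)$. The main subtle point is verifying that $\Delta'$ is indeed an $\sR$-order, in particular finitely generated over $\sR$; this is handled cleanly by the noetherianity hypothesis. The integrally-closed hypothesis, while not directly invoked in this matrix-unit argument, is what guarantees that the general theory of maximal orders behaves well in the ambient setting.
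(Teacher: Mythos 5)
Your argument is correct, and since the paper cites this as Theorem 8.7 of Reiner's \emph{Maximal Orders} without reproducing a proof, there is no in-paper argument to compare against; the matrix-unit device you use (passing to $\Delta' = E_{11}\Lambda E_{11}$, identified with a subring of $\sA$, and then recovering $\Lambda = M_n(\Delta')$ via $E_{1i}ME_{j1}=m_{ij}E_{11}$) is essentially the standard proof found in Reiner. Your observation that noetherianity of $\sR$ is what makes $\Delta'$ finitely generated, while the integrally-closed hypothesis is background rather than load-bearing here, is accurate.
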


\begin{proposition}[{\cite[Theorem~10.5(ii)]{MO}}]\label{MO10.5ii}
	Suppose that a separable $ F$-algebra $ A$ has decomposition
	\( A=\bigoplus_{i=1}^n  A_i\)
	into simple components. Then if for all $i=1,\ldots,n$, the ring $\Delta_i$ is a maximal $ R$-order in $ A_i$, then $\bigoplus_{i=1}^n \Delta_i$ is a maximal $ R$-order in $ A$.
\end{proposition}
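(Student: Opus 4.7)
The plan is to verify first that $\Delta\colonequals\bigoplus_{i=1}^n\Delta_i$ is at all an $\sR$-order in $\sA$, and then to promote maximality component by component using the central idempotent decomposition. That $\Delta$ is an $\sR$-order is routine: it is a subring of $\sA$ (with identity $\sum_i 1_{\sA_i}=1_\sA$), it is finitely generated as an $\sR$-module because each $\Delta_i$ is, and $\sF\cdot\Delta=\bigoplus_i\sF\cdot\Delta_i=\bigoplus_i\sA_i=\sA$.

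For maximality, I would take any $\sR$-order $\Delta'$ in $\sA$ with $\Delta\subseteq\Delta'$ and aim to show $\Delta'=\Delta$. Let $e_i\in\sA$ denote the central primitive idempotent cutting out $\sA_i$, i.e.\ the unit of $\sA_i$. Because $\Delta_i$ is an $\sR$-order in $\sA_i$ it contains $1_{\sA_i}=e_i$, hence $e_i\in\Delta\subseteq\Delta'$. Since the $e_i$ are mutually orthogonal central idempotents summing to $1_\sA$, they induce a ring decomposition $\Delta'=\bigoplus_{i=1}^n e_i\Delta'$, with each summand $e_i\Delta'$ a subring of $\sA_i$ containing $1_{\sA_i}$.

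It remains to argue that each $e_i\Delta'$ is an $\sR$-order in $\sA_i$: it is finitely generated over $\sR$ as a direct summand of the finitely generated $\sR$-module $\Delta'$, and $\sF\cdot e_i\Delta'\supseteq \sF\cdot\Delta_i=\sA_i$. Maximality of $\Delta_i$ in $\sA_i$, applied to the inclusion $\Delta_i\subseteq e_i\Delta'$, then gives $e_i\Delta'=\Delta_i$ for every $i$, and summing over $i$ yields $\Delta'=\Delta$.

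I do not expect any real obstacle; the only point that needs a moment's care is that the central idempotents $e_i$ actually lie in $\Delta$, which is where the hypothesis that each $\Delta_i$ is a bona fide $\sR$-order in $\sA_i$ (and in particular contains the identity of $\sA_i$) is used. Everything else is formal manipulation with direct-sum decompositions and the definition of an $\sR$-order.
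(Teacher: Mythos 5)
Your argument is correct, and it is the standard proof of this fact. Note, however, that the paper does not prove \cref{MO10.5ii} at all: it is quoted as a black box from Reiner's \emph{Maximal Orders} (the citation \cite[Theorem~10.5(ii)]{MO}), so there is no in-paper proof to compare against. Your reasoning --- show $\Delta$ is an order, observe that the central primitive idempotents $e_i$ already lie in $\Delta$ because $\Delta_i$ contains $1_{\sA_i}$, split any overorder $\Delta'$ as $\bigoplus_i e_i\Delta'$, check each $e_i\Delta'$ is an order in $\sA_i$ containing $\Delta_i$, and invoke maximality componentwise --- is exactly the argument one finds in Reiner, and every step is sound. The one place worth flagging as genuinely load-bearing is precisely the point you flagged yourself: that $e_i\in\Delta$, which is what turns the idempotent decomposition of $\sA$ into a decomposition of $\Delta'$ rather than merely of $\sA$; your proof handles it correctly.
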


\subsection{Skew fields over local fields} \label{sec:skew-fields-over-local-fields}
We begin with some generalities on cyclic algebras; see \cite[\S30]{MO} for details. Let $E/ F$ be a finite cyclic Galois extension of fields of degree $s$, with $\varsigma$ a generator of $\Gal(E/ F)$. Let $ a\in  F^\times$ be a unit. Then the cyclic algebra $ A\colonequals(E/ F,\varsigma, a)$ is defined to be the $ F$-algebra
\begin{equation*}
	(E/ F,\varsigma, a)\colonequals \bigoplus_{i=0}^{s-1} E \alpha^i,
\end{equation*}
where $\alpha$ is a formal $s$th root of $ a$, and $\alpha$ obeys the multiplication rule $\alpha x \alpha^{-1} =\varsigma(x)$ for all $x\in E$. 

The theory of skew fields over local fields was originally laid out by Hasse in \cite{Hasse}, a presentation of which is available in \cite[Chapter 3]{MO}. We provide a brief review of the results.

Let $ R$ be a complete commutative discrete valuation ring, $ F=\Frac R$ its field of fractions. Let $ D$ be a skew field with $\cent( D)= F$ and index $s$.
By extending the valuation from $ F$ to $ D$, one can show that $ D$ contains a unique maximal $ R$-order, which is the integral closure of $ R$ in $ D$.

Under the additional assumption that the residue field of $ R$ is finite, the skew field $ D$ can be described explicitly: see \cite[\S14]{MO}. In this case, $ F$ is a ($1$-dimensional) local field. Write $q$ for the order of the residue field of $ R$. Let $ F(\omega)$ be the cyclotomic field extension of $ F$ obtained by adjoining a primitive $(q^s-1)$st root of unity $\omega$. Then $ F(\omega)$ is a maximal subfield of $ D$, called the inertia field (unique up to conjugacy in $ D$).

Fix a uniformiser $\pi\in F$. Then there exists a uniformiser $\pi_{ D}$ in the maximal order such that $\pi_{ D}^s=\pi$, and there exists $1\le  r\le s$ coprime to $s$ such that $\pi_{ D}\omega\pi_{ D}^{-1}=\omega^{ q^ r}$. The fraction $ r/s\in\QQ/\ZZ$ is called the Hasse invariant of $ D$. Let $\sigma\in\Gal( F(\omega)/ F)$ be the automorphism defined by $\sigma(\omega)=\omega^{ q^ r}$; this is a power of the Frobenius and a generator of the Galois group $\Gal( F(\omega)/ F)$. The maximal $\ZZ_p$-order in $ D$ is generated by $\pi_{ D}$ and $\omega$ as an $ R$-algebra. In other words, $ D$ is the cyclic algebra
\begin{equation*}
	 D=\big(  F(\omega) /  F, \sigma, \pi_ F \big)=\bigoplus_{i=0}^{s-1}  F(\omega) \pi_{ D}^i.
\end{equation*}

Since $ F(\omega)$ is a maximal subfield of $ D$, it is also a splitting field. A splitting isomorphism can be described explicitly as follows.
\begin{align}
	 F(\omega)\otimes_ F D&\xrightarrow{\sim}M_s( F(\omega)) \label{eq:Reiner-splitting} \\
	x\otimes 1&\mapsto x\mathbf 1_s \notag \\
	1\otimes x&\mapsto \diag\left(x, \sigma(x),\ldots,\sigma^{s-1}(x)\right) \notag \\
	1\otimes \pi_ D &\mapsto \begin{psmallmatrix}
		0&1 \\ &&1 \\ \vdots&&&\ddots \\ &&&&1 \\ \pi &&& \dots & 0 \\
	\end{psmallmatrix} \notag
\end{align}
Here $x\in  F(\omega)$. We let $\phi$ denote the composite map
\begin{align}
	\phi:  D \to  F(\omega)\otimes_ F D&\xrightarrow{\sim}M_s( F(\omega)), \label{eq:phi-def}
\end{align}
where the first arrow is $x\mapsto 1\otimes x$ and the second one is \eqref{eq:Reiner-splitting}. Then $\phi$ maps $ D$ isomorphically onto its image (as rings).

\subsection{Reduced norms and the Dieudonné determinant} \label{sec:Nrd-det}
We recall definitions of the reduced norm map, the Dieudonné determinant, and some of their key properties. Standard references on reduced norms include \cite[§7D]{CR} and \cite[§9]{MO}. Caveat: while our definition of the reduced norm on a central simple algebra agrees with that of \cite[\S9a]{MO}, its extension to semisimple algebras differs from that in {\cite[§7D]{CR} and \cite[\S9b]{MO}}.
For the Dieudonné determinant, see \cite[III\S1]{Kbook} and \cite[165--166]{CR}. {An overview} with a view towards Iwasawa algebras is provided in \cite[pp.~616--617]{NichiforPalvannan}; we shall follow the exposition there.

Let $ F$ be a field. Let $ A$ be a central simple $ F$-algebra. Let $E/ F$ be a splitting field for $ A$, meaning that there is an isomorphism
\[\phi: E\otimes_ F A \xrightarrow{\sim}M_{n}( E)\]
of $ E$-algebras. A splitting field always exists by \cite[Proposition~7.25]{CR}.
The reduced characteristic polynomial of $a$ is defined to be
\[\rcp_{ A/ F}(a)\colonequals \cp(\phi(1\otimes a)),\]
where $\cp(\phi(1\otimes a))$ is the characteristic polynomial of the matrix $\phi(1\otimes a)$ over the field $ E$. The reduced characteristic polynomial does not depend on the choice of the splitting field $ E$, and its coefficients lie in $ F$.

The reduced norm, which can be seen as a noncommutative generalisation of the determinant, is defined as follows:
\[\nr_{ A/ F}(a)\colonequals(-1)^{\deg\rcp_{ A/ F}(a)}\rcp_{ A/ F}(a)(0).\]
For every $n\ge1$, the matrix ring $M_n( A)$ is also a central simple $ F$-algebra, and the above definition provides maps $\nr_{M_n( A)/ F}$. On invertible matrices, these take values in $ F^\times$, and they induce a reduced norm map on the colimit {$\GL(A)$}, which factors through the abelianisation {$K_1(A)$}, {thereby} defining a map $\nr: K_1( A)\to F^\times$. The kernel of this map is denoted by $\SK_1( A)\colonequals\ker(\nr)$.

The following result shows that elements in orders have integral reduced norms.
\begin{proposition}[{\cite[Theorem~10.1]{MO}}] \label{MO10.1}
	Let $ R$ be a noetherian integrally closed integral domain, $ F=\Frac( R)$ its field of fractions, $ A$ a central simple $ F$-algebra, and $\Delta$ an $ R$-order in $ A$. Then the reduced characteristic polynomial of an element in $\Delta$ has coefficients in $ R$.
\end{proposition}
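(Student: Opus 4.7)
The strategy is classical: translate integrality of $a\in\Delta$ over $\sR$ into integrality of the coefficients of $\rcp_{\sA/\sF}(a)$ over $\sR$, then invoke the integral closedness of $\sR$ in $\sF$ to deduce that these coefficients actually lie in $\sR$.

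First, I would observe that since $\Delta$ is an $\sR$-order in $\sA$, it is by definition a finitely generated $\sR$-module. As $\sR$ is noetherian, the subring $\sR[a]\subseteq\Delta$ is also a finitely generated $\sR$-module, whence $a$ is integral over $\sR$. Thus there exists a monic polynomial $f(X)\in\sR[X]$ with $f(a)=0$.

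Next, pick a splitting field $\sE/\sF$ with $\phi:\sE\otimes_\sF\sA\xrightarrow{\sim} M_n(\sE)$, and set $M\colonequals\phi(1\otimes a)\in M_n(\sE)$. The composite $\sA\to\sE\otimes_\sF\sA\xrightarrow{\phi} M_n(\sE)$ is a ring homomorphism, so $f(M)=0$ as well. Passing to an algebraic closure $\overline{\sE}$ of $\sE$, every eigenvalue of $M$ is a root of $f$, and hence is integral over $\sR$. By definition, $\rcp_{\sA/\sF}(a)=\cp(M)$, and its coefficients are, up to sign, the elementary symmetric functions in the eigenvalues of $M$. Being polynomial expressions in elements integral over $\sR$, these coefficients are themselves integral over $\sR$.

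Finally, as noted just before the proposition, the coefficients of $\rcp_{\sA/\sF}(a)$ lie in $\sF$ (this is the content of the standard fact that the reduced characteristic polynomial is independent of the splitting field and descends to $\sF$). Combining this with integrality over $\sR$ and using that $\sR$ is integrally closed in $\sF$, we conclude that all coefficients lie in $\sR$, as required. The only delicate point is the appeal to the descent of $\rcp_{\sA/\sF}(a)$ to $\sF$, but this is part of the foundational theory already invoked in the definition and requires no new argument here; the remainder of the proof is a standard integrality chase.
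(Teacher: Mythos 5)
Note first that the paper does not prove this statement: it simply cites \cite[Theorem~10.1]{MO} (Reiner's \emph{Maximal Orders}). Your proof is correct and is essentially the standard argument. The chain you give --- $\Delta$ finitely generated over a noetherian $\sR$ forces $\sR[a]$ to be finitely generated, hence $a$ integral; the eigenvalues of $\phi(1\otimes a)$ in $\overline{\sE}$ are roots of a monic polynomial in $\sR[X]$, hence integral; elementary symmetric functions of integral elements are integral; finally integral closedness of $\sR$ plus the fact that $\rcp_{\sA/\sF}(a)$ has coefficients in $\sF$ pins those coefficients down in $\sR$ --- is exactly the classical integrality chase. Reiner's own version phrases the middle step slightly differently, via the minimal polynomial of $a$ over $\sF$ and a Gauss-lemma argument, together with the fact that $\rcp_{\sA/\sF}(a)$ has the same irreducible factors as that minimal polynomial; but this is a cosmetic difference, and your eigenvalue route is equally valid and arguably more direct. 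No gap.
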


The notion of a reduced norm extends to arbitrary semisimple $ F$-algebras as follows: if $ A=\bigoplus_{i=1}^n  A_i$, where each $ A_i$ is a central simple algebra with centre $ F_i$, then there is a reduced norm map $\nr_{ A_i/ F_i}$ in each component, and their product defines a homomorphism
\[\nr_{ A/\cent( A)}\colonequals\bigoplus_{i=1}^n \nr_{ A_i/ F_i}:  A\to \cent( A)=\bigoplus_{i=1}^n  F_i.\]

There is another generalisation of the usual determinant: the Dieudonné determinant. Let $ R$ be a semilocal ring (that is, $ R/\rad  R$ is semisimple artinian).
Define the subgroup $W( R)\le  R^\times$ as
\[W( R)\colonequals \{(1+rs)(1+sr)^{-1}: (1+rs)\in  R^\times\}.\]
Then the Dieudonné determinant $\det:\GL_n( R)\to  R^\times/W( R)$ is the homomorphism characterised by the following two properties: 
\begin{enumerate}
	\item Elementary matrices have Dieudonné determinant $1$. An elementary matrix is a matrix which differs from the identity matrix by a single off-diagonal entry.
	\item If $r\in R$ and $M\in \GL_n( R)$ then $\det(\diag(r,1,\ldots,1) M)=r \det(M)$.
\end{enumerate}
In particular, the Dieudonné determinant of a(n upper or lower) triangular matrix is the product of the elements in its diagonal. This makes sense since the Dieudonné determinant is only defined up to $W( R)$, which contains the commutator subgroup of $ R^\times$. (This is not obvious; consult either \cite[Theorem~3.7]{Brenner} or \cite[III, Lemma~1.3.3 and Exercise~1.1]{Kbook} for a proof.)

We indicate how to compute $\det A$ for $A\in M_n( R)$. 
We say that an $n\times n$ matrix is a permutation matrix if it is obtained from the identity matrix $\mathbf 1_n$ by swapping two of its columns. This is the terminology used in \cite{NichiforPalvannan}; some authors also call a product of such matrices a permutation matrix.
Suppose that we can write $A=UBV$ where $B$ is diagonal, and $U$ and $V$ are products of elementary, permutation and scalar matrices. In this case it is said that $A$ admits a diagonal reduction via elementary operations.
As explained above, we can compute the Dieudonné determinant of any diagonal matrix, and the same goes for elementary and scalar matrices. Permutation matrices have order at most $2$, hence their Dieudonné determinants are second roots of unity---in fact, the Dieudonné determinant of a permutation matrix is always $-1$, see \cite[pp.~35--36]{Dieudonne}.

Jacobson's elementary reduction theorem provides an important class of rings for which there is such  a decomposition $A=UBV$. We say that $ R$ is a PID if all left ideals and all right ideals are principal, see \cite[Chapter~3, \S1]{Jacobson}. Then the elementary reduction theorem states the following:
\begin{theorem}[{\cite[Chapter~3, Theorem~16]{Jacobson}}] \label{thm:Jacobson-reduction}
	If $ R$ is a PID, then every square matrix over $ R$ admits a diagonal reduction via elementary operations.
\end{theorem}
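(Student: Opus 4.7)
The proof is by induction on the size $n$ of the matrix; the case $n=1$ is trivial. For the inductive step, the strategy is to use the allowed operations to bring $A$ into block-diagonal form $\begin{psmallmatrix} d & 0 \\ 0 & A'\end{psmallmatrix}$, after which the inductive hypothesis applies to the $(n-1)\times(n-1)$ block $A'$.

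The technical heart of the argument is a pairwise reduction lemma: for any two entries $a,b\in\sR$, one constructs a $2\times 2$ matrix in $\GL_2(\sR)$---expressible as a product of elementary, permutation, and scalar matrices---that sends $(a,b)^\top$ to $(d,0)^\top$, where $\sR d = \sR a + \sR b$ is the principal left ideal generated by $a,b$ (principal since $\sR$ is a left PID). Writing $d=pa+qb$ and $a=a'd$, $b=b'd$, the domain property of $\sR$ forces $pa'+qb'=1$. In the commutative case the Bezout matrix $\begin{psmallmatrix} p & q \\ -b' & a'\end{psmallmatrix}$ does the job, but in the noncommutative setting the identity $a'b=b'a$ needed for the second coordinate to vanish fails in general. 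One instead invokes the left Ore condition (which holds in every noncommutative PID) to find $s,t\in\sR$, not both zero, with $sa'=tb'$; then $sa=tb$, and $\begin{psmallmatrix} p & q \\ s & -t\end{psmallmatrix}(a,b)^\top=(d,0)^\top$. A careful choice of $s,t$ makes this matrix invertible, and a standard decomposition for $\GL_2$ over a PID expresses it as a product of the allowed elementary factors.

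Applying the pairwise reduction repeatedly clears the first column of $A$, leaving only the top entry nonzero. A symmetric argument using right ideals and the right Ore condition reduces the first row via column operations. However, these column operations may reintroduce nonzero entries in the first column whenever the new $(1,1)$ entry fails to left-divide some first-row entry. One therefore iterates, alternating between first-column and first-row clearing. Each cycle strictly enlarges either the principal left ideal generated by the first-column entries or the principal right ideal generated by the first-row entries; since $\sR$ is a PID and hence Noetherian, the ascending chain condition forces termination after finitely many steps. Once the iteration stabilises, the first row and column are zero outside position $(1,1)$, and the inductive hypothesis applied to the lower-right block completes the proof.

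The main obstacle is the pairwise reduction step: the failure of the commutative Bezout identity must be repaired via the Ore conditions, and the resulting $\GL_2(\sR)$ element must then be exhibited explicitly as a product of elementary, permutation, and scalar matrices. This is the portion of the argument where the two-sided PID hypothesis on $\sR$ is genuinely used, rather than merely one-sided principality.
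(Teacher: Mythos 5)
The paper does not prove this statement; it cites it from Jacobson without further argument, so there is no in-paper proof to compare against. Judged on its own, your outline (induction on $n$, a $2\times2$ pairwise reduction, ACC termination) has the right overall shape, but it waves away the one step where the result is actually difficult. The offending sentence is ``a standard decomposition for $\GL_2$ over a PID expresses it as a product of the allowed elementary factors.'' This is not standard and, taken at face value, is false in general: Cohn showed that $\ZZ[(1+\sqrt{-19})/2]$ is a commutative PID over which $\GL_2$ is \emph{not} generated by elementary and diagonal matrices (a fortiori not by elementary, permutation and scalar matrices), so the $2\times2$ completion of a unimodular row need not factor into the allowed elementary operations over an arbitrary PID. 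Since diagonalizing a $2\times2$ matrix by elementary operations already contains the whole difficulty of Jacobson's theorem, invoking such a ``standard decomposition'' is circular. Whatever makes the theorem true, it must use more than two-sided principality as such---the noncommutative or local structure has to enter (in the paper's actual application the ring is $\Sigmachi[\pi_\sD^{-1}]$, a localization of a local ring carrying a Weierstrass theory, which is much stronger than an abstract PID)---and your sketch does not isolate that input.

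A secondary gap concerns the choice of $(s,t)$. The left Ore condition merely produces \emph{some} nonzero pair with $sa'=tb'$; for $\begin{psmallmatrix}p&q\\s&-t\end{psmallmatrix}$ to lie in $\GL_2(\sR)$ you need $(s,t)$ to be a generator of the kernel of the left-linear map $(x,y)\mapsto xa'-yb'$, and you need to know that this kernel is free of rank one. That follows from the theorem that submodules of free modules over a left PID are free, together with a rank count over the skew field of fractions, but it is not a formal consequence of the Ore condition and is not mentioned in your proposal. Finally, your ACC termination argument is plausible in spirit but should track the left/right divisibility chain of the $(1,1)$-entry rather than the ideals generated by the entire first row or column, since a row-clearing pass can leave the latter unchanged while still repopulating the column.
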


{The Dieudonné determinant is related to reduced norms as follows.}
If $ R= D$ is a skew field, then the Dieudonné determinant is a homomorphism $\det: \GL_n( D)\to ( D^\times)^\ab$, and the reduced norm factors over it, see \cite[(7.42)]{CR}:
\begin{equation} \label{eq:nr-det}
	\begin{tikzcd}
		\GL_n( D) \arrow[rr, "\nr_{M_n( D)/\cent( D)}"] \arrow[rd, "\det"'] & & \cent( D)^\times \\ & ( D^\times)^\ab \arrow[ru, "\nr_{ D/\cent( D)}"'] &         
	\end{tikzcd}
\end{equation}

{The Dieudonné determinant naturally defines} a homomorphism $K_1( R)\to  R^\times/W( R)$.
Due to results of Vaserstein, there is an isomorphism $ R^\times/W( R)\simeq ( R^\times)^\ab$ whenever $ R$ is a local ring or $2\in R^\times$, see \cite[III, Example~1.3.7]{Kbook}. Moreover, \cite[Theorem~2]{Vaserstein} states that in this case, the Dieudonné determinant provides an isomorphism
\begin{equation} \label{eq:Dieudonne-det-iso}
	K_1( R)\xrightarrow{\sim}( R^\times)^\ab.
\end{equation}
The ring $\Q(\G)$ is semisimple and therefore semilocal. The Iwasawa algebra $\Lambda(\G)$ is finitely generated as a module over the commutative local ring $\Lambda(\Gamma_0)$, hence it is semilocal, see \cite[Proposition~5.28(ii)]{CR}. Therefore the Dieudonné determinant is defined for both $\Q(\G)$ and $\Lambda(\G)$.

We will sometimes talk about reduced norms of elements in $\Lambda(\G)$. In this case, the map is understood to be the reduced norm map $\nr_{\Q(\G)/\cent(\Q(\G))}$, as the ring $\Lambda(\G)$ is not necessarily semisimple but it is contained in the semisimple ring $\Q(\G)$. In particular, the reduced norm of such an element need not lie in $\Lambda(\G)$, but see \cref{p-abelian-integrality} for a special case in which this is true.

\section{Smoothed equivariant \texorpdfstring{$p$}{p}-adic Artin \texorpdfstring{$L$}{L}-functions} \label{sec:smoothed-eq-p-adic-Artin-L}
In this section, we work on the analytic side. We first recall the definitions of characterwise resp. equivariant $p$-adic Artin $L$-functions, then introduce a smoothed version of the latter, and investigate its basic properties.

\subsection{Characterwise \texorpdfstring{$p$}{p}-adic Artin \texorpdfstring{$L$}{L}-functions}
Let $p$ an odd rational prime, and let $K$ be a number field.
Let $K_\infty$ be the cyclotomic $\ZZ_p$-extension of $K$. Let $\LL/K$ be a Galois extension such that $\LL$ is totally real, contains $K_\infty$, and such that the degree $(\LL:K_\infty)$ is finite. In other words, $\LL/K$ is an admissible one-dimensional $p$-adic Lie extension. The Galois group $\G\colonequals\Gal(\LL/K)$ is isomorphic to the semidirect product of the profinite group $\Gamma\colonequals\Gal(K_\infty/K)\simeq\ZZ_p$ and the finite group $H\colonequals\Gal(\LL/K_\infty)$, with $H$ being a normal subgroup: $\G\simeq H\rtimes\Gamma$. Finally, let $S$ be a finite set of places of $K$ containing all infinite places.

For all places $v\notin S$ of $K$, fix a place $w_\infty$ of $\LL$ above $v$. Let $I_{w_\infty}$ be the associated inertia group, and let $\phi_{w_\infty}\in \G/I_{w_\infty}$ denote the Frobenius at $w_\infty$. Let $\mathfrak N(v)$ denote the absolute norm of $v$.

\begin{definition}[{\cite[Definition~VII.10.1]{NeukirchANT}}]
	Let $\chi$ be an irreducible complex character of $\G$ with open kernel (i.e. $\G/\ker\chi$ is finite). Define the $S$-truncated Artin $L$-function as
	\[L_S(\chi,s)\colonequals \prod_{v\notin S} \det \left( 1-\phi_{w_\infty} \mathfrak N(v)^{-s} \middle| V_\chi^{I_{w_\infty}}\right)^{-1},\]
	where $V_\chi$ is the representation affording $\chi$, that is, $V_\chi$ is a simple $\CC[\G/\ker\chi]$-module with character $\chi$; in other words, $V_\chi$ is a $\CC$-vector space, $\rho_\chi: \G\twoheadrightarrow\G/\ker\chi\to \Aut V_\chi$ an Artin representation, and $\chi=\Tr\rho_\chi$. Then $V_\chi^{I_{w_\infty}}$ denotes the subspace fixed by $I_{w_\infty}$. 
\end{definition}
The definition is independent of the choices for the places $w_\infty$: changing $w_\infty$ leads to replacing $\phi_{w_\infty}$ by a conjugate, and the determinant is conjugation invariant. The function $L_S(\chi,s)$ is absolutely convergent for $\Re s>1$, and admits a meromorphic conti\-nuation to the complex plane.

Let $\chi_\cyc:\Gal(K(\mu_{p^\infty})/K)\to \ZZ_p^\times$ be the cyclotomic character. The decomposition $\ZZ_p^\times\simeq \mu_{p-1}\times(1+p\ZZ_p)$ gives rise to characters
\begin{align*}
	\omega: \Gal(K(\mu_{p^\infty})/K) &\to \Gal(K(\zeta_p)/K)\to \mu_{p-1} ;\\
	\kappa: \Gal(K(\mu_{p^\infty})/K) & \to \Gal(K_\infty/K)\to 1+p\ZZ_p .
\end{align*}
Here $\omega$ is the Teichmüller character.

The construction of $p$-adic Artin $L$-functions for linear characters was given by Pi.~Cassou-Nogu\`es resp. Deligne--Ribet in \cite{Cassou1979} resp. \cite{DeligneRibet}. From now on, we assume that $S$ contains all places ramifying in $\LL/K$ (so in particular, all $p$-adic places) as well as all infinite places. Moreover, we fix an isomorphism $\iota: \CC\xrightarrow{\sim}\CC_p$ of fields.
\begin{theorem} \label{thm:CNDR}
	For each linear character $\chi: \G\to\CC_p^\times$ with open kernel, there exists a unique continuous function (called the $S$-truncated $p$-adic Artin $L$-function associated with $\chi$)
	\begin{align*}
		L_{p,S}(\chi,-)&:\ZZ_p\to\CC_p  \phantom{\to\CC_p\to} \text{for $\chi\ne\one$};\\
		L_{p,S}(\one,-)&:\ZZ_p-\{1\}\to\CC_p
	\end{align*}
	satisfying the interpolation property
	\[L_{p,S}(\chi,1-n)= \iota \left( L_S\left(\iota^{-1}\circ \chi\omega^{-n},1-n\right)\right) \quad \forall n\ge1.\]
	Moreover, there exist power series $G_{\chi,S}(X)\in \OO_{\QQ_p(\chi)}[[X]]$ such that
	\[L_{p,S}(\chi,1-s)=\frac{G_{\chi,S}(u^s-1)}{H_\chi(u^s-1)},\]
	where
	$\gamma$ is the fixed topological generator of $\Gamma$, and it  corresponds to $1+X$ under the (non-canonical) isomorphism $\OO_{\QQ_p(\chi)}\llb\Gamma\rrb\simeq\OO_{\QQ_p(\chi)}[[X]]$,
	$u\colonequals\kappa(\gamma)$, and
	\[H_\chi(X)\colonequals\begin{cases}\chi(\gamma)(1+X)-1 & \text{if $H\subseteq \ker\chi$;}\\ 1 & \text{otherwise.}\end{cases}\]
\end{theorem}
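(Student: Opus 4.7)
The plan is to reduce the theorem to the original constructions of Cassou-Nogu\`es \cite{Cassou1979} and Deligne--Ribet \cite{DeligneRibet} for linear $p$-adic Artin characters of totally real fields. Since $\chi$ has open kernel, it factors through a finite quotient of $\G$; being linear, it further factors through $\G^{\ab}$ and may be identified with a character of a ray class group of $K$. This reduces the theorem to the classical situation of a linear character on a finite abelian extension of a totally real field.

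For the existence of the interpolating function, I would invoke the Deligne--Ribet construction: for each such $\chi$, there is an $\OO_{\QQ_p(\chi)}$-valued pseudomeasure $\mu_\chi$ on $\Gamma$ whose integration against $\kappa^{s-1}$ yields a continuous function on (an open dense subset of) $\ZZ_p$. The interpolation identity at $s = 1-n$ for $n \geq 1$ follows by comparing to the explicit formulas of Deligne--Ribet for $L_S(\chi\omega^{-n}, 1-n)$ at negative integers, realised via $q$-expansions of Hilbert modular forms; alternatively, one may use the Cassou-Nogu\`es congruences between partial zeta values, pulled back via $\iota$. Uniqueness is immediate from the density of $\{1-n : n \geq 1\}$ in $\ZZ_p$ together with continuity.

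For the power series form, I would use the Iwasawa isomorphism $\OO_{\QQ_p(\chi)}\llb\Gamma\rrb \simeq \OO_{\QQ_p(\chi)}[[X]]$ sending $\gamma \mapsto 1+X$. The $\chi$-isotypic component of the Deligne--Ribet measure then corresponds to a power series. When $\chi$ is nontrivial on $H$, the pseudomeasure is a genuine measure, directly providing $G_{\chi,S}(X) \in \OO_{\QQ_p(\chi)}[[X]]$ with $H_\chi = 1$. When $H \subseteq \ker\chi$, the character factors through $\Gamma$ and the pseudomeasure has a pole whose scale is governed by the element $\gamma - \chi(\gamma)^{-1} \in \OO_{\QQ_p(\chi)}\llb\Gamma\rrb$; under the Iwasawa isomorphism this is a unit multiple of $\chi(\gamma)(1+X) - 1 = H_\chi(X)$, so multiplying by $H_\chi$ clears the pole and yields the integral power series $G_{\chi,S}$.

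The main obstacle is the pseudomeasure formalism in the case $H \subseteq \ker\chi$, in particular for $\chi = \one$ (which gives the $p$-adic Dedekind zeta function of $K$): the isolated pole at $s = 1$ must be matched precisely to the prescribed denominator $H_\chi$, not merely to some equivalent factor. A secondary subtlety is the descent of the power series coefficients to $\OO_{\QQ_p(\chi)}$ rather than a larger coefficient ring; this follows from the $\Gal(\overline{\QQ_p}/\QQ_{p,\chi})$-equivariance of the construction together with the fact that $\QQ_{p,\chi}$ is precisely the field of definition of the character $\chi$.
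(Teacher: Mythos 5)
The paper states this theorem without giving its own proof, attributing it to \cite{Cassou1979} and \cite{DeligneRibet}, and your sketch is a faithful outline of that classical construction: reduction to ray class characters, the Deligne--Ribet pseudomeasure, the Iwasawa isomorphism, the correct identification of $\gamma - \chi(\gamma)^{-1}$ as a unit multiple of $H_\chi(X)=\chi(\gamma)(1+X)-1$ (since $\chi(\gamma)$ is a root of unity), and Galois descent of the coefficients to $\OO_{\QQ_p(\chi)}$. Your approach therefore coincides with the paper's, which simply defers to the cited literature for this foundational input.
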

Note that due to work of Siegel \cite{Siegel}, the interpolation property does not depend on the choice of the isomorphism $\iota$.

The construction of $p$-adic Artin $L$-functions was extended to not necessarily linear Artin characters via Brauer induction by Greenberg in \cite{Greenberg1983}. This, however, weakens the above statement, with $G_{\chi,S}(X)$ now being a quotient of two power series. The interpolation property is also weakened from $n\ge1$ to $n\ge2$. Both of these weakenings are due to possibly negative coefficients in the Brauer induction theorem. Ellerbrock and Nickel recently verified the interpolation property at $n=1$, see \cite[Corollary~6.3]{EN2022}.
\begin{theorem}
	Let $\chi:\G\to\CC_p$ be a nontrivial {irreducible} character with open kernel. There exists a unique continuous function
	\[L_{p,S}(-,\chi):\ZZ_p\to\CC_p\]
	satisfying the interpolation property
	\[L_{p,S}(1-n,\chi)=\iota\left(L_S\left(1-n,\iota^{-1}\circ\chi\omega^{-n}\right)\right) \quad \forall n\ge1.\]
	Moreover, there exists a fraction $G_{\chi,S}(X)\in \Quot(\OO_{\QQ_p(\chi)}[[X]])$ of power series such that
	\begin{equation} \label{eq:Greenberg-Lp}
		L_{p,S}(1-s,\chi)=\frac{G_{\chi,S}(u^s-1)}{H_\chi(u^s-1)},
	\end{equation}
	where $u$, $T$ and $H_\chi(X)$ are as in \cref{thm:CNDR}.
\end{theorem}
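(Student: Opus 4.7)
The plan is to reduce to the linear case already settled by \cref{thm:CNDR}, following Greenberg's original strategy. Brauer's induction theorem furnishes a decomposition
\[
\chi = \sum_i n_i\, \mathrm{Ind}_{U_i}^{\G} \chi_i, \qquad n_i \in \ZZ,
\]
where each $U_i \le \G$ is an open subgroup with fixed field $F_i \colonequals \LL^{U_i}$ a finite totally real extension of $K$ (using that $\LL$ is totally real), and each $\chi_i : U_i \to \CC_p^\times$ is linear with open kernel. The extension $\LL/F_i$ is again admissible and one-dimensional, with $\Gal(\LL/F_i) = U_i$. Applying \cref{thm:CNDR} to $\LL/F_i$ with $S_i$ the set of places of $F_i$ above $S$ yields continuous functions $L_{p, S_i}(1-s, \chi_i)$ satisfying the analogous interpolation, and one defines
\[
L_{p, S}(s, \chi) \colonequals \prod_i L_{p, S_i}(s, \chi_i)^{n_i}.
\]

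Two things must then be checked: that this candidate satisfies the asserted interpolation, and that it is independent of the chosen Brauer decomposition. For interpolation, I would combine inductivity of complex Artin $L$-functions, $L_S(s, \mathrm{Ind}_{U_i}^{\G} \chi_i) = L_{S_i}(s, \chi_i)$, with the compatibility of induction with the Teichmüller twist $\omega^{-n}$; the linear interpolation formulas then multiply to give the desired values at every $s = 1 - n$ with $n \ge 1$ (the case $n = 1$ being covered by the Ellerbrock--Nickel theorem mentioned in the text). Since $\{1 - n : n \ge 1\}$ is dense in $\ZZ_p$ and $\CC_p$ is Hausdorff, any continuous function on $\ZZ_p$ satisfying this interpolation is uniquely determined; this yields uniqueness, and in particular independence of the Brauer decomposition.

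For the form \eqref{eq:Greenberg-Lp}, I would substitute the expressions
\[
L_{p, S_i}(1-s, \chi_i) = \frac{G_{\chi_i, S_i}(u_i^s - 1)}{H_{\chi_i}(u_i^s - 1)}
\]
into the defining product. Writing $\Gamma_i \colonequals \Gal(F_i K_\infty / F_i)$, an open subgroup of $\Gamma$ of finite index $p^{a_i}$, one has $u_i = u^{p^{a_i}}$, so each factor rewrites in the uniform variable $X = u^s - 1$ at the price of a finite extension of the coefficient ring. Each $H_{\chi_i}$ is nontrivial precisely when $\chi_i$ factors through the cyclotomic quotient of $U_i$; tracking these contributions, the combined denominator reduces to $H_\chi(u^s - 1)$ as prescribed by \cref{thm:CNDR}, leaving a fraction of power series in the numerator.

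The main obstacle I anticipate is the descent of the coefficient field down to $\QQ_p(\chi)$: the naive construction lives over the compositum of the fields $\QQ_p(\chi_i)$, which can be much larger. To obtain $G_{\chi, S} \in \Quot(\OO_{\QQ_p(\chi)}[[X]])$, one argues that the construction is $\Gal(\QQ_p^\al/\QQ_p(\chi))$-equivariant: any such $\sigma$ permutes the Brauer data $(U_i, \chi_i) \mapsto (U_i, {}^\sigma \chi_i)$ with the same multiplicities (because $\chi$ itself is $\sigma$-fixed), and by the uniqueness established above this preserves $L_{p, S}(1-s, \chi)$; Galois descent for the ring $\OO_{\QQ_p^\al}[[X]]$ then yields the desired containment.
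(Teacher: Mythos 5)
Your proposal correctly reconstructs Greenberg's original argument via Brauer induction, and the paper itself supplies no proof here — it defers entirely to Greenberg \cite{Greenberg1983} for the $n\ge 2$ case and to Ellerbrock--Nickel for the interpolation at $n=1$, exactly the two ingredients you invoke. Your sketch therefore matches the intended route; the only places that would need firming up in a full writeup are the bookkeeping of the denominator (it is cleaner to define $G_{\chi,S}(X)$ directly as $H_\chi(X)$ times the product of the $G_{\chi_i,S_i}((1+X)^{p^{a_i}}-1)^{n_i}/H_{\chi_i}((1+X)^{p^{a_i}}-1)^{n_i}$, rather than asserting the denominators ``reduce to'' $H_\chi$) and the Galois-descent step, where one should work coefficientwise rather than with the ill-behaved ring $\OO_{\QQ_p^{\al}}[[X]]$.
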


\subsection{Equivariant \texorpdfstring{$p$}{p}-adic Artin \texorpdfstring{$L$}{L}-functions}
Ritter and Weiss defined equivariant $p$-adic Artin $L$-functions as follows; see \cite[\S\S3--4]{TEIT-II} for the original version and \cite[\S\S4.4,~4.8]{NABS} for the formulation used here.
\begin{definition}[{\cite[Theorem~8]{TEIT-II}}]\label{def:Hom-star}
	Let $R_p(\G)$ be the additive group generated by $\QQ_p^\al$-valued characters of $\G$ with open kernel. Let $\Hom^*\!\left(R_p(\G), \Q^\al(\Gamma)^\times\right)$ denote the group of homomorphisms $f:R_p(\G)\to\Q^\al(\Gamma)^\times$ satisfying the following transformation properties: 
	\begin{enumerate}
		\item $f(\chi\otimes\rho)=\rho^\sharp (f(\chi))$ for all characters $\rho$ such that $\res^\G_H\rho=\one$ (so-called  type W characters), where $\rho^\sharp\in\Aut(\Q^\al(\Gamma))$ is the automorphism given by {$\rho^\sharp(g)=\rho(g)g$ for all $g\in \Gamma$};
		\item $f({}^\sigma\chi)=\sigma(f(\chi))$ for all $\sigma\in\Gal(\QQ_p^\al/\QQ_p)$.
	\end{enumerate}
\end{definition}

\begin{theorem}[{\cite[Proof of Theorem~8]{TEIT-II}}] \label{thm:RW-iso}
	There is an isomorphism
	\begin{equation*}
		\cent(\Q(\G))^\times \xrightarrow{\sim} \Hom^*\!\left(R_p(\G), \Q^\al(\Gamma)^\times\right).
	\end{equation*}
\end{theorem}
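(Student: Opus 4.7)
The plan is to construct the isomorphism componentwise using the Wedderburn decomposition of the semisimple algebra $\Q^\al(\G)$, and then verify that the two transformation conditions defining $\Hom^*$ are precisely those needed to descend from $\Q^\al(\G)$ back to $\Q(\G)$.

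First I would build the forward map. Since $\Gamma_0$ is central in $\G$, we have $\Q(\G) = \Frac(\Lambda(\Gamma_0))[\G/\Gamma_0]$, a finite-dimensional group algebra over a field, hence semisimple; extending scalars gives $\Q^\al(\G)$, whose centre decomposes as a product of fields indexed by simple components. For each $\chi \in \Irr(\G)$, the claim is that the corresponding factor of $\cent(\Q^\al(\G))$ is canonically identified with $\Q^\al(\Gamma)$. Given $x \in \cent(\Q(\G))^\times$, extract its $\chi$-component $f_x(\chi) \in \Q^\al(\Gamma)^\times$ via this identification; extending additively yields a homomorphism $f_x : R_p(\G) \to \Q^\al(\Gamma)^\times$.

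Next I would verify the two transformation properties. For Galois equivariance (ii), the element $x$ is fixed by $\Gal(\QQ_p^\al/\QQ_p)$ since it lies in $\Q(\G)$, while the Galois action permutes the idempotents of $\cent(\Q^\al(\G))$ by the rule $e_\chi \mapsto e_{{}^\sigma\chi}$; comparing components yields $f_x({}^\sigma\chi) = \sigma(f_x(\chi))$. For the type W twist (i), observe that if $\res^\G_H \rho = \one$, then $\chi \otimes \rho$ has the same restriction to $H$ as $\chi$, so the two representations differ only by a shift of the $\Gamma$-action by $\rho$; translating this shift through the identification with $\Q^\al(\Gamma)^\times$ produces exactly the automorphism $\rho^\sharp$ characterised by $\gamma \mapsto \rho(\gamma)\gamma$.

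For the inverse direction, given $f \in \Hom^*(R_p(\G), \Q^\al(\Gamma)^\times)$, I would assemble the values $(f(\chi))_\chi$ into an element of $\prod \Q^\al(\Gamma)^\times \cong \cent(\Q^\al(\G))^\times$. Condition (i) ensures consistency across characters sharing the same simple component (these characters form an orbit under type W twists), so the assembled element is well-defined; condition (ii) ensures Galois invariance, so the element descends to $\cent(\Q(\G))^\times$. Checking that the two constructions are mutually inverse is then formal.

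The main obstacle I anticipate is the clean identification $\cent(\Q^\al(\G)) \cong \prod_{[\chi]} \Q^\al(\Gamma)$, with the centre of each simple component being exactly $\Q^\al(\Gamma)$ (rather than some subfield or extension), together with a precise description of the equivalence classes $[\chi]$ in terms of $\G$-orbits of irreducible characters of $H$, their Galois orbits, and type W twists. This requires a careful analysis of the Wedderburn decomposition of the twisted group algebra $\Frac(\Lambda(\Gamma_0))[\G/\Gamma_0]$; the index $w_\chi = (\G : \G_\eta)$ from \S 1.1 should govern the size of the type W orbit contributing to each component, and the fields $\QQ_p(\eta)$ and $\QQ_{p,\chi}$ should describe the Galois-theoretic structure of the centre before passing to $\QQ_p^\al$.
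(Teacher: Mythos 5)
Your outline captures the right overall strategy — decompose $\Q^\al(\G)$ into simple Wedderburn components, identify the centre of each with $\Q^\al(\Gamma)$, and descend to $\Q(\G)$ using the Galois and type~W transformation properties — and you correctly flag as the crux the identification $\cent(\Q^\al(\G)e_\chi)\simeq \Q^\al(\Gamma)$. But you leave that crux unresolved, and it is not merely a matter of bookkeeping with $w_\chi$ and $\QQ_p(\eta)$: there is no \emph{canonical} such identification. What closes the gap is the Ritter--Weiss element: one proves there is a \emph{unique} element $\gamma_\chi\in\cent(\Q^\al(\G)e_\chi)$ that acts trivially on $V_\chi$ and has the specific shape $\gamma_\chi=g_\chi c_\chi$ with $g_\chi\in\G$ projecting to $\gamma^{w_\chi}\in\Gamma$ and $c_\chi\in(\QQ_p^\al[H]e_\chi)^\times$. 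Then $\cent(\Q^\al(\G)e_\chi)=\Q^\al(\Gamma_\chi)$ where $\Gamma_\chi=\overline{\langle\gamma_\chi\rangle}$, and the map to $\Q^\al(\Gamma)$ is the one sending $\gamma_\chi\mapsto\gamma^{w_\chi}$. This is the content of \cite[Proposition~5(2)]{TEIT-II}, and the paper's $j_\chi$ is built from it.

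The specific form $\gamma_\chi=g_\chi c_\chi$ is not decorative: it is exactly what makes your two verification steps go through. For the type~W property, one needs to compare $\gamma_{\chi\otimes\rho}$ with $\gamma_\chi$ — the relation $\gamma_{\chi\otimes\rho}=\rho(g_\chi)^{-1}\gamma_\chi$ (which follows from uniqueness, since $\rho$ is trivial on $H$ and on $c_\chi$) is what produces the automorphism $\rho^\sharp:\gamma\mapsto\rho(\gamma)\gamma$ under $\gamma_\chi\mapsto\gamma^{w_\chi}$; your phrase ``differ only by a shift of the $\Gamma$-action'' is an intuition that needs this to be made precise. Similarly, well-definedness in the inverse direction (characters sharing a simple component, which you correctly identify as type~W orbits) requires knowing that $j_{\chi\otimes\rho}=\rho^\sharp\circ j_\chi$, again via the $\gamma_\chi$ relation. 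A smaller slip: $\Frac(\Lambda(\Gamma_0))[\G/\Gamma_0]$ is a crossed product, not literally a group algebra, since $\Gamma/\Gamma_0$ does not lift to a finite subgroup of $\G$; semisimplicity still holds, but the phrasing is off. Note also that the paper does not reprove this theorem — it is cited from \cite[Proof of Theorem~8]{TEIT-II} with only a sketch of the map; the essential input your proposal is missing is precisely the one the paper recalls in the paragraph following the statement.
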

We give a brief description of the isomorphism in \cref{thm:RW-iso}. Ritter and Weiss proved that there exists a unique element 
\begin{equation}\label{eq:RW-gamma-chi}
	\gamma_\chi\in\cent\left(\Q^\al(\G)e_\chi\right)
\end{equation}
which acts trivially on the space $V_\chi$ and it is of the form $\gamma_\chi=g_\chi c_\chi$, where $g_\chi \in \G$ is mapped to $\gamma^{w_\chi}\in\Gamma$ under the projection $\G\twoheadrightarrow\Gamma$, and $c_\chi\in (\QQ_p^\al[H]e_\chi)^\times$; see \cite[Proposition~5(2)]{TEIT-II}. In fact, this construction also works over a sufficiently large finite extension of $\QQ_p$ instead of $\QQ_p^\al$, see \cite[\S4.4]{NABS}. Let $\Gamma_\chi$ denote the procyclic group generated by $\gamma_\chi$. Then $\cent(\Q^\al(\G)e_\chi)= \Q^\al(\Gamma_\chi)$, which gives rise to a group homomorphism
\begin{equation*}
	j_\chi: \cent\left(\Q^\al(\G)\right)\twoheadrightarrow \cent(\Q^\al(\G)e_\chi)\simeq\Q^\al\left(\Gamma_\chi\right)\xrightarrow{\gamma_\chi\mapsto\gamma^{w_\chi}}\Q^\al\left(\Gamma\right)
\end{equation*}
Then the isomorphism of \cref{thm:RW-iso} is given by 
\begin{equation}\label{eq:j-chi-def}
	z\mapsto (\chi\mapsto j_\chi(z)).
\end{equation}

Characterwise $p$-adic Artin $L$-functions satisfy the transformation properties listed in \cref{def:Hom-star}, see \cite[Proposition~11]{TEIT-II}. Therefore they can be seen as elements on the right hand side of the isomorphism of \cref{thm:RW-iso}, so the following definition makes sense:

\begin{definition}[{\cite[\S4.8]{NABS}}]\label{def:RWPhi}
	Let $\RWL_{\LL/K,S}\in \Hom^*\!\left(R_p(\G), \Q^\al(\Gamma)^\times\right)$ be defined by the assignment
	\[\RWL_{\LL/K,S}: \chi\mapsto \frac{G_{\chi,S}(\gamma-1)}{H_\chi(\gamma-1)}.\]
	The $S$-truncated equivariant $p$-adic Artin $L$-function $\Phi_S(\LL/K)\in \cent(\Q(\G))^\times$ is the corresponding element under the isomorphism of \cref{thm:RW-iso}.
\end{definition}

\subsection{Smoothing}
Recall that $S$ is a finite set of places of $K$ containing all places ramifying in $\LL/K$ as well as all infinite places.
Let $T$ be another finite set of places of $K$ such that $S\cap T=\emptyset$; we shall refer to $T$ as the smoothing set.
We define a smoothed version of the equivariant $p$-adic Artin $L$-function, following \cite[§8.3]{NABS}. The element $\Psi_{S,T}$ there differs from our $\Phi_S^T$ by a twist by the cyclotomic character, which is in line with the fact that $\Psi_{S,T}$ is associated with a CM extension, whereas $\Phi_S^T$ corresponds to the totally real extension $\LL/K$.

\begin{definition}\label{def:PhiST}
	For each $v\in T$ fix a place $w_\infty$ of $\LL$. Define
	\[\Phi_S^T(\LL/K) \colonequals \Phi_S(\LL/K) \cdot \prod_{v\in T} \nr\big(1-\phi_{w_\infty}\big) .\]
	Here the reduced norm map is applied to the class of the map
	\[\Lambda(\G) \xrightarrow{1-\phi_{w_\infty}} \Lambda(\G)\]
	in $K_1(\Q(\G))$. If there is no risk of confusion, we will omit the field extension from the notation and simply write $\Phi_S^T$.
\end{definition}

{We shall address the hypotheses imposed upon $T$ in \cref{rem:T-hyp}; for the results of this section, only disjointness from $S$ is required.}
It would have been possible to introduce smoothing already in the complex $L$-function, as is done in \cite[(0.20) and Footnote~4]{AMG}. Instead, we make the following:

\begin{definition}
	Let $\RWL_{\LL/K,S}^T$ denote the element of $\Hom^*\!\left(R_p(\G), \Q^\al(\Gamma)^\times\right)$ corresponding to $\Phi_S^T$ under the isomorphism of \cref{thm:RW-iso}.
\end{definition}

In the rest of this section, we describe how $\RWL_{\LL/K,S}^T$ and $\Phi_S^T(\LL/K)$ transform under replacing $\G$ by a quotient. {Analogous} properties are available for non-smoothed equivariant $p$-adic Artin $L$-functions, see \cite[Proposition~12]{TEIT-II}. These go back to corresponding statements for non-equivariant $p$-adic Artin $L$-functions, see \cite{Greenberg1983}, and ultimately to characterwise complex Artin $L$-functions, see \cite[Proposition VII\S10.4(iii,iv)]{NeukirchANT}.

Let $\LL/\LL'/K$ be a tower of fields such that $\LL'/K$ satisfies the same conditions as imposed upon $\LL/K$. Then all {of the} notions defined {above} for $\LL/K$ are also defined for $\LL'/K$. We shall distinguish the objects associated with $\LL'/K$ from those defined for $\LL/K$ by a prime symbol, e.g. $\G'\colonequals\Gal(\LL'/K)$. Note that {$\Gamma'=\Gamma=\Gal(K_\infty/K)$}. We choose the places relevant to the construction of smoothed $p$-adic $L$-functions such that they remain compatible with the non-primed versions, that is, we require $w_\infty\mid w_\infty'$. In particular, there is a natural map $\pi:\G\twoheadrightarrow \G'$ sending $\phi_{w_\infty}$ to $\phi_{w_\infty'}$.
The natural projection $\pi$ induces surjective maps $\Lambda(\G)\twoheadrightarrow \Lambda(\G')$ and $\Q(\G)\twoheadrightarrow\Q(\G')$, which we will, by abuse of notation, also denote by $\pi$. Functoriality of $K$-groups induces a morphism $K_1(\pi):K_1(\Q(\G))\to K_1(\Q(\G'))$.

\begin{lemma} \label{RWL-functoriality}
	The elements $\RWL_{\LL/K,S}^T$ and $\Phi_S^T(\LL/K)$ satisfy the following transformation properties:
	in the setup above,
	\begin{align*}
		\RWL_{\LL/K,S}^T \circ\inf_{\G'}^\G  &= \RWL_{\LL'/K,S}^T,\\
		\pi\left(\Phi_S^T(\LL/K)\right)&=\Phi'^T_S(\LL'/K),
	\end{align*}
	where $\inf_{\G'}^\G$ denotes inflation from $\G'$ to $\G$.
\end{lemma}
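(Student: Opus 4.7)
The plan is to deduce both identities from their non-smoothed analogues in \cite[Proposition~12]{TEIT-II}, supplemented by the observation that the smoothing factor transforms correctly under $\pi$.

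First I would verify that the two stated identities are in fact equivalent, by checking that the isomorphism of \cref{thm:RW-iso} is natural with respect to the surjection $\pi\colon\G\twoheadrightarrow\G'$. Concretely, for every $\chi'\in\Irr(\G')$ inflated to $\chi\colonequals\chi'\circ\pi\in\Irr(\G)$, one wants $j_\chi = j_{\chi'}\circ\pi|_{\cent}$. By the uniqueness clause in the definition of the Ritter--Weiss element $\gamma_\chi$ from \eqref{eq:RW-gamma-chi}, this reduces to verifying that $\pi(\gamma_\chi)=\gamma_{\chi'}$. For that one observes three things: the central idempotent $e_\chi$ maps to $e_{\chi'}$ under $\pi$ (since $\ker\chi\supseteq\ker\pi$, both are determined by the same quotient $\G/\ker\chi = \G'/\ker\chi'$); the indices agree, $w_\chi=w_{\chi'}$, because for compatible irreducible constituents $\eta,\eta'$ of $\chi|_H$ resp.\ $\chi'|_{H'}$ one has $\G_\eta = \pi^{-1}(\G'_{\eta'})$; and $\pi(g_\chi)$ is then a preimage of $\gamma^{w_{\chi'}}$ in $\G'$, so that $\pi(g_\chi c_\chi)$ satisfies the defining properties of $\gamma_{\chi'}$. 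Given this naturality, the two claimed identities are equivalent and it suffices to prove the one for $\Phi_S^T$.

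To establish $\pi(\Phi_S^T(\LL/K)) = \Phi_S^T(\LL'/K)$, I would apply $\pi$ to the defining formula from \cref{def:PhiST}. This splits the task into the identity $\pi(\Phi_S(\LL/K)) = \Phi_S(\LL'/K)$, which is precisely the non-smoothed statement of \cite[Proposition~12]{TEIT-II}, and the identities
\[
\pi\!\left(\nr(1-\phi_{w_\infty})\right) = \nr(1-\phi_{w'_\infty})
\]
for each $v\in T$. The latter follow from functoriality of the reduced norm under the ring homomorphism $\pi\colon\Q(\G)\to\Q(\G')$: this induces $K_1(\pi)$ on $K_1$, and the relevant square with reduced norms vertically and $\pi|_{\cent}$ horizontally commutes. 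Since the places were chosen so that $w_\infty\mid w'_\infty$, the homomorphism $\pi$ sends multiplication by $1-\phi_{w_\infty}$ on $\Lambda(\G)$ to multiplication by $1-\phi_{w'_\infty}$ on $\Lambda(\G')$, yielding the required equality.

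I expect the main obstacle to be the careful bookkeeping needed to verify naturality of the isomorphism of \cref{thm:RW-iso} in the first paragraph; every other step is essentially a direct application of a previously established result.
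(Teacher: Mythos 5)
Your proposal is correct and follows essentially the same route as the paper: reduce to the non-smoothed case \cite[Proposition~12]{TEIT-II}, handle the smoothing factor by functoriality of the reduced norm under $\pi$ together with the compatible choice of places, and use the naturality of the isomorphism of \cref{thm:RW-iso} which hinges on $\pi(\gamma_\chi)=\gamma_{\chi'}$. The only minor difference is presentational: the paper packages the naturality into a single commutative diagram and cites \cite[Lemma~3.3.1(iii)]{AMG} for $\pi(\gamma_\chi)=\gamma_{\chi'}$, whereas you re-derive this compatibility directly from the defining properties of the Ritter--Weiss element, which is also fine.
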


\begin{proof}
	For $T=\emptyset$, this is \cite[Proposition~12]{TEIT-II}. It remains to verify compatibility of the smoothing factors $(1-\phi_{w_\infty})$ on both sides.
	
	There is a commutative diagram:
	\begin{equation}\label{eq:inflation-diagram}
		\begin{tikzcd}
			K_1(\Q(\G)) \ar[dd, "K_1(\pi)"] \ar[rr] \ar[rd,"\nr"] && \Hom^*\!\left(R_p(\G), \Q^\al(\Gamma)^\times\right) \ar[dd, "-\circ\inf^\G_{\G'}"] \\
			& \cent(\Q(\G))^\times \ar[ru,"\sim"]  \\
			K_1(\Q(\G')) \ar[rr] \ar[rd,"\nr"] && \Hom^*\!\left(R_p(\G'), \Q^\al(\Gamma)^\times\right) \\
			& \cent(\Q(\G'))^\times \ar[ru,"\sim"] \ar[from=2-2, crossing over, "\pi" near start]
		\end{tikzcd}
	\end{equation}
	The square containing $K_1$-groups and $\Hom^*$-groups is \cite[Lemma~9]{TEIT-II} for deflation maps, and the two triangles are \cite[(4.4)]{NABS}, originally formulated in \cite[p.~558]{TEIT-II}. The left front square is obviously commutative. 
	
	Commutativity of the right front square can be seen from the definition of the northeast-pointing arrows: these are given by \eqref{eq:j-chi-def}.
	For commutativity of the square, one only needs to check that $\pi$ maps $\gamma_\chi$ to $\gamma_\chi'$, which follows from the uniqueness property of $\gamma_\chi$, as shown in \cite[Lemma~3.3.1(iii)]{AMG}. 
	
	Finally, the left vertical arrow sends $(1-\phi_{w_\infty})$ to $(1-\phi_{w'_\infty})$ because the places $w_\infty$ and $w_\infty'$ were chosen compatibly. The claims follow.
\end{proof}

In \cref{sec:changing-top-field}, these properties will be used to establish functoriality for integrality of $\Phi_S^T$ under the assumption of the main conjecture.

\begin{remark}
	One may also consider replacing $\G$ by an open subgroup $\G'$. Then in the statement of \cref{RWL-functoriality}, one can replace inflation by induction, as done in \cite[Proposition~12]{TEIT-II}. See also \cite[\S3.3.2]{AMG}. This, however, does not directly lead to a functoriality statement as for quotients, the crux being the definition of a map $\Q(\G)^\times\to \Q(\G')^\times$ making the following square commutative.
	\[\begin{tikzcd}
		\Q(\G)^\times \ar[d,"\nr"] \ar[r, dashed] & \Q(\G')^\times \ar[d,"\nr"] \\
		\cent(\Q(\G))^\times \ar[r] & \cent(\Q(\G'))^\times
	\end{tikzcd}\]
\end{remark}

\section{The equivariant \texorpdfstring{$p$}{p}-adic Artin conjecture} \label{sec:conjectures}
We begin by recalling the complex Artin conjecture. As before, let $S$ be a finite set of places of $K$ containing all places ramifying in $\LL/K$ as well as all infinite places.
\begin{conjecture}[Artin conjecture]
Let $\chi$ be an irreducible character of $\G$ with open kernel. Then the function $L_S(\chi,s)$ is entire whenever $\chi$ is nontrivial.
\end{conjecture}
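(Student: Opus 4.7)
The statement is the classical (complex) Artin holomorphy conjecture, famously open in general. The standard line of attack proceeds via Brauer induction. Since $\chi$ has open kernel, it factors through the finite quotient $\G/\ker\chi$, and by Brauer's induction theorem one may write
\begin{equation*}
\chi = \sum_i n_i \, \mathrm{Ind}_{H_i}^{\G/\ker\chi}(\chi_i)
\end{equation*}
with $H_i$ elementary subgroups, $\chi_i$ linear characters of $H_i$, and $n_i \in \ZZ$. Combined with the inductive invariance of Artin $L$-functions (with $S$ replaced by its preimage in the corresponding subfield), this yields a factorisation
\begin{equation*}
L_S(\chi, s) = \prod_i L_S(\chi_i, s)^{n_i}.
\end{equation*}

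Each linear $\chi_i$ corresponds, via class field theory, to a Hecke character of the fixed field of $H_i$, so $L_S(\chi_i, s)$ is a Hecke $L$-function and hence entire whenever $\chi_i$ is nontrivial. This already recovers Brauer's theorem: $L_S(\chi, s)$ is meromorphic on $\CC$. However, the factorisation does not preclude poles arising from those indices $i$ with $n_i < 0$, where zeros of $L_S(\chi_i, s)$ would be inverted to poles.

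The main, and well-known, obstacle is precisely to show that these apparent poles cancel. There appears to be no purely character-theoretic way to enforce this cancellation: the only known route runs through Langlands reciprocity, realising $\chi$ as the character of a cuspidal automorphic representation of $\mathrm{GL}_n(\mathbb A_K)$ whose standard $L$-function is entire and agrees with $L_S(\chi, s)$ up to finitely many Euler factors. When $\G/\ker\chi$ is solvable, this is available through Langlands--Tunnell and solvable base change; for two-dimensional $\chi$ one can invoke the modularity results of Khare--Wintenberger and Kisin. Outside these regimes the conjecture remains open, and I would not expect the present paper---whose focus is on an equivariant $p$-adic refinement---to settle it. The realistic plan is therefore to accept the conjecture as a black-box input (as in the statement recorded here), and, if a proof is really being attempted, to restrict to the solvable or known-modular cases where the automorphic route is unconditional.
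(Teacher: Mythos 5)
You are correct: this is recorded in the paper as an open conjecture, not a theorem, and the paper offers no proof, remarking only that the assertion is known for abelian extensions (via the theory of Hecke $L$-functions) but remains open in general. Your discussion of the Brauer-induction obstruction and the automorphic route via Langlands--Tunnell and modularity is an accurate account of the state of the art and correctly identifies the statement as a black-box input rather than something to be established here.
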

The assertion has been proven for abelian extensions \cite[Theorem VII.10.6]{NeukirchANT}, but remains open in general. 
The corresponding statement for $p$-adic Artin $L$-functions was conjectured by Greenberg in \cite{Greenberg1983} for the depletion set $S=S_p\cup S_\infty$.
\begin{theorem}[$p$-adic Artin conjecture] \label{padicArtin}
	For a character $\chi:\G\to\QQ_p^\al$ with open kernel,
	\[G_{\chi,S}(X)\in \ZZ_p[[X]]\otimes_{\ZZ_p}\QQ_p^\al,\]
	where $G_{\chi,S}\in \Frac(\OO_{\QQ_p(\chi)}[[X]])$ is the object appearing in the numerator of the expression for the $p$-adic Artin $L$-function in \eqref{eq:Greenberg-Lp}.
\end{theorem}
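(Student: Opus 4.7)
The plan is to derive this from Wiles's main conjecture of Iwasawa theory for totally real fields, following the route sketched by Greenberg \cite{Greenberg1983}. For linear characters $\chi$, the stronger statement $G_{\chi,S}\in \OO_{\QQ_p(\chi)}[[X]]$ is already due to Cassou-Nogu\`es \cite{Cassou1979} and Deligne--Ribet \cite{DeligneRibet}, who construct $G_{\chi,S}$ directly by $p$-adic interpolation of constant terms of Hilbert modular Eisenstein series, so that no denominator ever arises.

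For a general $\chi \in \Irr(\G)$, I would apply Brauer's induction theorem to write
\[\chi = \sum_i n_i \,\mathrm{Ind}_{\G_i}^\G \psi_i, \qquad n_i\in\ZZ,\]
where each $\psi_i$ is a linear character with open kernel on an open subgroup $\G_i\le\G$. The inductive behaviour of characterwise $p$-adic Artin $L$-functions under restriction to open subgroups then yields the formal identity
\[G_{\chi,S}(X) = \prod_i G_{\psi_i,S_i}(X)^{n_i}\]
in $\Frac(\OO_{\QQ_p(\chi)}[[X]])$, where $S_i$ denotes the set of places of the fixed field of $\G_i$ above $S$. Negative multiplicities $n_i$ are what turn this expression into a genuine fraction, precluding a direct character-by-character deduction of integrality.

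The key input to eliminate the denominators is Wiles's main conjecture: for every such linear $\psi$ on an open subgroup $\G'\le\G$, after the (non-canonical) identification $\OO_{\QQ_p(\psi)}\llb\Gamma\rrb \simeq \OO_{\QQ_p(\psi)}[[X]]$, the series $G_{\psi,S'}(X)$ generates, up to the explicit Euler factor $H_\psi$, the $\psi$-isotypic component of the characteristic ideal of the Iwasawa module $X_\infty^{F'}=\Gal(M_\infty^{F'}/F'_\infty)$ attached to the fixed field $F'$ of $\G'$. Characteristic ideals of finitely generated torsion modules over the complete regular two-dimensional local ring $\OO_{\QQ_p(\psi)}[[X]]$ are principal with generators in $\OO_{\QQ_p(\psi)}[[X]]$, which gives integrality for every linear $\psi_i$. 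For general $\chi$ one then performs a descent: rather than work character by character, one applies the main conjecture to the full Iwasawa module over a sufficiently large finite abelian totally real subextension containing the field cut out by $\chi$, extracts the $\chi$-isotypic component after scalar extension to $\QQ_p^\al$, and recognises the resulting characteristic polynomial as $H_\chi(X)$ times the Brauer-theoretic product above, hence as an element of $\ZZ_p[[X]]\otimes_{\ZZ_p}\QQ_p^\al$.

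The principal obstacle is the descent step: there is no character-by-character reason for the denominators introduced by the negative coefficients $n_i$ to cancel, and any naive attempt to combine the linear case character by character fails. The cancellation is genuinely global, and relies on packaging the isotypic information from all Brauer summands into a single module-theoretic statement (the main conjecture applied to $X_\infty$ over the abelian base extension) before restricting to the $\chi$-component. Once this descent is performed, integrality is an automatic consequence of the fact that characteristic polynomials of finitely generated torsion Iwasawa modules have coefficients in the coefficient ring.
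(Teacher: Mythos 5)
The paper does not supply its own proof of this statement: it cites Greenberg \cite{Greenberg1983}, who showed in his Proposition~5 that the characterwise main conjecture implies the $p$-adic Artin conjecture, together with Wiles \cite{Wiles}, who proved the main conjecture. Your proposal is a sketch of that same Greenberg--Wiles argument, so the route is the one the paper relies on.

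There is, however, a conceptual flaw in the descent step as you describe it. You propose to apply ``the main conjecture to the full Iwasawa module over a sufficiently large finite abelian totally real subextension containing the field cut out by $\chi$.'' But Brauer induction is only needed when $\chi$ is non-linear, and in that case the fixed field of $\ker\rho_\chi$ has non-abelian Galois group $\G/\ker\chi$ over $K$, so no abelian extension of $K$ can contain it; the quantifier you are reaching for is empty. What Greenberg actually does is work over the \emph{non-abelian} extension $L^+_\infty/K$ directly: he forms the Iwasawa module attached to $L^+_\infty$, extends scalars to $\QQ_p^\al$, and takes the $\chi$-isotypic component, which is a bona fide finitely generated torsion $\QQ_p^\al\otimes_{\QQ_p}\Lambda(\Gamma)$-module. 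Frobenius reciprocity, applied to the Brauer decomposition of $\chi$, identifies the characteristic polynomial of this $\chi$-component with the product of the characteristic polynomials of the $\psi_i$-components of the Iwasawa modules over the intermediate fields fixed by the $\G_i$, each of which is governed by the abelian main conjecture. It is only because the $\chi$-component is an actual module---not a virtual one assembled from the Brauer summands---that its characteristic polynomial is a genuine element of $\ZZ_p[[X]]\otimes_{\ZZ_p}\QQ_p^\al$, and this is precisely where the denominators caused by negative multiplicities $n_i$ cancel. The rest of your outline (the linear case via Deligne--Ribet and Cassou-Nogu\`es, the product formula under induction, and identifying $G_{\chi,S}$ up to the $H_\chi$-factor with this characteristic polynomial) is essentially correct.
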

In \cite[Proposition~5]{Greenberg1983}, Greenberg showed that the appropriate characterwise main conjecture implies the $p$-adic Artin conjecture. Since the main conjecture is known due to the work of Wiles, so is the $p$-adic Artin conjecture, see \cite[Theorems~1.1 and 1.2]{Wiles}.

Greenberg also considered the following stronger version, and proved that it follows from the main conjecture under the additional assumption of the vanishing of Iwasawa's $\mu$-invariant, see \cite[p.~87]{Greenberg1983}. The statement was proven unconditionally by Ritter and Weiss in \cite[Remark~(G)]{TEIT-II}.
\begin{theorem}[refined $p$-adic Artin conjecture] \label{padicArtin2}
	For a character $\chi:\G\to\QQ_p^\al$ with open kernel,
	\[G_{\chi,S}(X)\in \OO_{\QQ_p(\chi)}[[X]].\]
\end{theorem}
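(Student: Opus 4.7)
My plan is to combine the unrefined $p$-adic Artin conjecture (\cref{padicArtin}) with a Galois descent argument exploiting the transformation properties of characterwise $p$-adic Artin $L$-functions, supplemented by a more delicate integrality input to dispose of possible $p$-power denominators.

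Unpacking the tensor product, one has
\[\ZZ_p[[X]]\otimes_{\ZZ_p}\QQ_p^\al=\bigcup_F \OO_F[[X]][1/p],\]
where $F$ ranges over finite extensions of $\QQ_p$ inside $\QQ_p^\al$. After enlarging $F$ to a finite Galois extension of $\QQ_p$ containing $\QQ_p(\chi)$, \cref{padicArtin} yields some $k\ge0$ with $p^k G_{\chi,S}\in\OO_F[[X]]$. For the descent, I would invoke the transformation property $L_{p,S}({}^\sigma\chi,s)=\sigma(L_{p,S}(\chi,s))$ for $\sigma\in\Gal(\QQ_p^\al/\QQ_p)$, which is item~(ii) of \cref{def:Hom-star} and was established for $p$-adic Artin $L$-functions in \cite[Proposition~11]{TEIT-II}. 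Restricting to $\sigma\in\Gal(F/\QQ_p(\chi))$ gives ${}^\sigma\chi=\chi$, and $\sigma$ fixes the polynomial $H_\chi\in\QQ_p(\chi)[X]$, hence $\sigma(G_{\chi,S})=G_{\chi,S}$. Since $\OO_F\cap\QQ_p(\chi)=\OO_{\QQ_p(\chi)}$, Galois descent then produces the intermediate statement $G_{\chi,S}\in\OO_{\QQ_p(\chi)}[[X]][1/p]$.

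The decisive step, and the main obstacle, is the removal of the factor $p^{-k}$: the two preceding steps supply only bounded $p$-power denominators, whereas the refined statement demands genuine membership in $\OO_{\QQ_p(\chi)}[[X]]$. This is exactly where Greenberg's approach in \cite{Greenberg1983} requires vanishing of the Iwasawa $\mu$-invariant, which guarantees that Wiles's characteristic ideal admits an integral (rather than merely rational) generator. To proceed unconditionally, I would pass to the equivariant setting and seek an integral element $\zeta\in K_1$ of a suitable Ore localisation of $\Lambda(\G)$ with $\nr(\zeta)=\Phi_S$, coming from an equivariant enhancement of Wiles's main conjecture. Integrality of reduced characteristic polynomials over orders (\cref{MO10.1}), together with characterwise evaluation via $j_\chi$, would then transfer this integrality to $G_{\chi,S}$. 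Producing such an integral equivariant element in full generality is the principal difficulty.
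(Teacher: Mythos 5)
Your proposal does not complete the proof. The Galois-descent step is essentially unnecessary: the paper already records that $G_{\chi,S}$ lies in $\Frac(\OO_{\QQ_p(\chi)}[[X]])$, so once \cref{padicArtin} tells you it is a power series with coefficients in some $\OO_F[1/p]$, the coefficients automatically lie in $\QQ_p(\chi)\cap\OO_F[1/p]=\OO_{\QQ_p(\chi)}[1/p]$ (using $\OO_F\cap\QQ_p(\chi)=\OO_{\QQ_p(\chi)}$), with no appeal to the $\Gal(\QQ_p^\al/\QQ_p)$-equivariance of $p$-adic $L$-functions needed. What remains is the genuine content---removing the residual $p$-power denominator---and here the argument stops short. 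You propose to produce an integral element $\zeta\in K_1$ of a suitable localisation of $\Lambda(\G)$ with $\nr(\zeta)=\Phi_S$ and then evaluate at $\chi$ via $j_\chi$, but you explicitly concede that ``producing such an integral equivariant element in full generality is the principal difficulty.'' A proof cannot terminate in the declaration that the decisive step is hard; as written, this is a reduction of \cref{padicArtin2} to an unproven (and arguably stronger) equivariant integrality statement, not a proof of it. Moreover, the kind of statement you would need has to be formulated with some care: the unsmoothed $\Phi_S$ is not integral as an element of $\cent(\Q(\G))$ because of the $H_\chi$-pole at the trivial character (this is precisely why the paper introduces the smoothing set $T$), so ``$\zeta$ integral with $\nr(\zeta)=\Phi_S$'' over the naked Iwasawa order is not available and the localisation you invoke has to be chosen so as to cancel exactly those poles and nothing else.

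For context, the paper does not reprove \cref{padicArtin2}; it quotes it as a known theorem, attributing the unconditional proof to Ritter and Weiss \cite[Remark~(G)]{TEIT-II}, with Greenberg's earlier argument \cite[87]{Greenberg1983} requiring the additional hypothesis $\mu=0$. Your first two steps are fine as far as they go, but the proposal halts exactly where the real work begins.
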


In analogy with these conjectures, we posit the following equivariant version of the $p$-adic Artin conjecture. Let $T$ be a finite set of places of $K$ such that $S\cap T=\emptyset$ and $T\ne\emptyset$.
\begin{conjecture}[equivariant $p$-adic Artin conjecture] \label{integrality-Phi}
	Let $\mathfrak M$ be a $\Lambda(\Gamma_0)$-order in $\Q(\G)$ containing $\Lambda(\G)$. Then the smoothed equivariant $p$-adic Artin $L$-function $\Phi_S^T$ is in the image of the composite map
	\begin{equation} \label{nr-composite}
		\mathfrak M\cap \Q(\G)^\times \to K_1(\Q(\G)) \xrightarrow{\nr} \cent(\Q(\G))^\times,
	\end{equation}
	where the first arrow is the natural map sending an invertible element to the class of the $1\times 1$ matrix consisting of said element.
\end{conjecture}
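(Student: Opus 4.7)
Since the full conjecture in the generality stated is unlikely to be provable at present, I would aim for the restricted versions highlighted in the introduction: the maximal-order variant conditional on EIMC, and the $p$-abelian corollary. The plan is to route the proof through the smoothed equivariant Iwasawa main conjecture (to be formulated in \cref{sec:MC}), which furnishes a unique element $\zeta_S^T\in K_1(\Q(\G))$ satisfying $\nr(\zeta_S^T)=\Phi_S^T$ and $\partial(\zeta_S^T)=-[Y_S^T]$. Since $\nr$ factors through the Dieudonné determinant via \eqref{eq:nr-det}, and since $\det$ induces the isomorphism \eqref{eq:Dieudonne-det-iso}, the conjecture becomes the assertion that $\zeta_S^T$ admits a representative in $\mathfrak M\cap\Q(\G)^\times$, rather than merely in $\GL_n(\Q(\G))$ for some $n\ge1$. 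This reframes integrality as a \emph{dimension reduction} problem on $K_1$.

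The heart of the argument is thus the dimension reduction itself, adapted from \cite{NichiforPalvannan} to the semidirect product case. Since $\Gamma_0$ is central in $\G$, the Iwasawa algebra $\Lambda(\G)$ can be presented as a skew power series ring over $\Lambda(\Gamma_0)$ with commuting $\sigma$ and $\delta$, so Venjakob's Weierstraß theory (\cref{WDT,WPT}) is available. I would use it to reduce the reduced order of entries of a matrix lifting of $\zeta_S^T$; combined with Jacobson's elementary reduction \cref{thm:Jacobson-reduction} applied after localising at a suitable prime to extract a skew-field quotient, this should diagonalise the lifting matrix modulo products of elementary, permutation, and scalar matrices. The diagonal can then be collapsed to a single entry using multiplicativity of the Dieudonné determinant, yielding a representative of $\zeta_S^T$ by a single element of $\Q(\G)^\times$.

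To then verify that this single element lies in $\mathfrak M$ when $\mathfrak M$ is maximal, I would invoke \cref{MO8.7,MO10.5ii} together with the Wedderburn decomposition $\Q(\G)\simeq\bigoplus_i M_{n_i}(\sD_i)$ from \cite{W}: every maximal $\Lambda(\Gamma_0)$-order in $\Q(\G)$ decomposes as a direct sum of matrix rings over the unique maximal orders in the complete skew fields $\sD_i$ described in \cref{sec:skew-fields-over-local-fields}, so the integrality condition reduces componentwise to the classical statement \cref{MO10.1} about integrality of reduced characteristic polynomials for elements of orders. The $p$-abelian corollary should then follow almost at once from the integral refinement of EIMC in \cite{UAEIMC}, since in that regime $\Phi_S^T$ lands directly in $\cent(\Lambda(\G))$ and no reduction through maximal orders is required.

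The main obstacle will be the dimension reduction step. Nichifor and Palvannan work with ordinary commutative power series, where Weierstraß preparation and elementary row/column operations coexist cleanly; in our semidirect setting, the $\sigma$-derivation $\delta$ complicates both the tracking of reduced orders of entries after elementary manipulations and the computation of the resulting distinguished skew polynomials. The fact that $\sigma$ and $\delta$ commute (because $\Gamma_0$ is central) will be essential for porting Venjakob's theory over, but bookkeeping through the reduction---in particular, ensuring that the single-element output lies in the correct $\Lambda(\Gamma_0)$-order---will require care, as will the verification that the twist by the smoothing Euler factors $(1-\phi_{w_\infty})$ behaves compatibly with the argument.
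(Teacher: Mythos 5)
Your plan reproduces the paper's approach for the provable cases: route $\Phi_S^T$ through the smoothed EIMC element $\zeta_S^T$, translate integrality into a statement about Dieudonn\'e determinants via Vaserstein's isomorphism \eqref{eq:Dieudonne-det-iso}, and prove the resulting dimension-reduction statement (\cref{2.13}) Wedderburn-component-wise using Weierstra\ss{} preparation, Jacobson's elementary reduction, and \cref{MO10.1}; the $p$-abelian case then follows from Johnston--Nickel's structural result (\cref{p-abelian-EIMC}). Two small corrections to align with the paper: in the smoothed conjecture (\cref{sEIMCu}) the sign is $\partial(\zeta_S^T)=[Y_S^T]$, not $-[Y_S^T]$ (the minus sign belongs to the non-smoothed version with $C_S^\bullet$), and the ring to which Jacobson's reduction is applied is not a ``skew-field quotient'' but the localisation $\Sigmachi[\pi_\sD^{-1}]$ of the maximal order in a single Wedderburn component, which \cref{Sigmachi-ncPID} shows to be a noncommutative PID; the skew power series structure lives on $\Sigmachi=\OO_{D^*_\eta}[[X;\bart,\bart-\id]]$ rather than on $\Lambda(\G)$ over $\Lambda(\Gamma_0)$ directly.
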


\begin{remark}
	\Cref{integrality-Phi} is a generalisation of a question of Nichifor--Palvannan \cite[Question~1.2]{NichiforPalvannan}, who considered extensions of totally real fields cut out by a non-trivial mod~$p$ character, with the Galois group being a direct product of $\Gamma$ and a finite group. They showed that the answer to their question is positive for a certain maximal order, see \cite[Theorem~1]{NichiforPalvannan}. 
	
	We shall provide further comments on how the equivariant $p$-adic Artin conjecture is analogous to the non-equivariant version in \cref{rem:epAC-AC}.
\end{remark}
\begin{remark}
	The smaller $\mathfrak M$ is, the stronger the statement of \cref{integrality-Phi} becomes. In particular, it is at its strongest resp. weakest when $\mathfrak M$ is $\Lambda(\G)$ resp. a maximal order.
\end{remark}

\section{Main conjectures} \label{sec:MC}
In order to state a smoothed version of the equivariant Iwasawa main conjecture---in particular, to define the object on the algebraic side---we impose further restrictions on our setup. Let $L/K$ be a finite Galois extension of number fields such that $L$ is a CM number field containing $\zeta_p$ (we will address these assumptions in \cref{rem:assumptions-on-L}). Let $L^+$ denote its maximal totally real subfield. Let $L_\infty\colonequals K_\infty L$ and $L_\infty^+\colonequals K_\infty L^+$. In the notation above, set $\LL\colonequals L^+_\infty$. We keep the assumption that $S$ and $T$ are finite non-empty disjoint sets of places of $K$, $S$ contains all places ramifying in $L^+_\infty/K$ and all infinite places.
Then the objects $\Gamma$, $H$, $\G$ and $\Phi_S^T$ are defined as in \cref{sec:smoothed-eq-p-adic-Artin-L}. Finally, let $\widetilde \G\colonequals \Gal(L_\infty/K)$.

\[\begin{tikzcd}[remember picture, every label/.append style = {font = \small}, every arrow/.append style={no head}]
	&& |[alias=Li]| L_\infty \ar[dl] \ar[dr] \\
	& L_\infty^+ \arrow[dr] \arrow[dd, "\G"] && |[alias=L]| L \ar[dl] \\
	K_\infty \arrow[dr, "\ZZ_p\simeq\Gamma"'] \arrow[ru, "H"] && L^+ \arrow[ld, no head] \\
	& |[alias=K]|K                                                                
\end{tikzcd}\begin{tikzpicture}[overlay,remember picture]
	\draw (K) .. controls +(2.5,0) and +(5.5,-1) .. (Li) node [midway,shift=({.2,-.2})] {\small$\widetilde\G$};
	\node[right=-1mm of L] {\small$\ni\zeta_p$};
\end{tikzpicture}\]

\begin{remark} \label{rem:T-hyp}
    {When working with a smoothing set $T$, one usually requires the condition that the only root of unity $\zeta\in\mu(\LL)$ congruent to $1$ modulo all primes above $T$ is $\zeta=1$ itself; see \cite[Remark~3.1]{NABS} or \cite[p.~4]{OnBrumerStark}. This condition is satisfied whenever $T$ contains two primes of different residue characteristics or a prime with large enough norm.}
    
    {After tensoring with $\ZZ_p$, the condition on $T$ becomes vacuous, provided that $T$ is non-empty.
    Indeed, suppose that $1\ne \zeta\in\mu(\LL)\otimes_\ZZ\ZZ_p$ is a $p$-power root of unity such that for all $v\in T$, we have $\zeta-1\in w_\infty$, where $w_\infty$ is as in \cref{def:PhiST}. Then $w_{\infty}\mid p$, but $T$ contains no $p$-adic places, since those all lie in $S$.}

    {In this work, the objects involving the smoothing set are the equivariant $p$-adic $L$-functions $\Phi^T_S$ introduced above, and the Johnston--Nickel module $Y^T_S$ defined below. Both of these are objects over $\ZZ_p$ and not over $\ZZ$. Therefore no further assumption involving roots of unity is necessary.}
\end{remark}

\subsection{The Johnston--Nickel module \texorpdfstring{$Y_S^T$}{Y\_S\^{}T}} \label{sec:YST}
The object on the algebraic side of the smoothed equivariant main conjecture is given rise to by the module $Y_S^T$, which was defined by Johnston and Nickel \cite{NABS}. We now recall the properties of this module.

Let $j\in\widetilde\G$ denote complex conjugation. Every $\Lambda(\widetilde\G)$-module $M$ decomposes as $M^+\oplus M^-$, where $j$ acts as $+1$ resp. $-1$ on $M^+$ resp. $M^-$. 
Let \[\Lambda\big(\widetilde\G\big)_-\colonequals\Lambda\big(\widetilde \G\big)\big/(1+j).\] The $\Lambda(\widetilde \G)$-module structure on $M$ induces $\Lambda(\G)$- resp. $\Lambda(\widetilde \G)_-$-module structures on the direct summand $M^+$ resp. $M^-$.

For a $\Lambda(\widetilde\G)$-module $M$ and $r\in\ZZ$, let $M(r)$ denote the $r$th Tate twist, which has the same underlying abelian group as $M$ with group action twisted by the $r$th power of the cyclotomic character: $g\star m\colonequals\chi_\cyc(g)^r\cdot g* m$ where $g\in\G$ and $ m\in M$, ``$\star$'' is the new action and ``$*$'' is the old one. We note two properties of $\Lambda(\widetilde \G)$-modules which shall be used in the sequel.

\begin{lemma} \label{lem:minus-Tate}
	Let $M$ be a $\Lambda(\widetilde \G)$-module. Then $(M(-1))^-(1)= M^+$.
\end{lemma}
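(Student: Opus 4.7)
The plan is to unwind the definition of the Tate twist and carry out a direct computation, using the single key arithmetic fact that complex conjugation $j\in\widetilde\G$ acts on $p$-power roots of unity by inversion, so $\chi_\cyc(j)=-1$.

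To keep notation clean, I would write the original $\Lambda(\widetilde\G)$-action on $M$ as $g\cdot m$, and the twisted action on $M(r)$ as $g\star m = \chi_\cyc(g)^r(g\cdot m)$. Then the first step is purely a computation on the underlying abelian group: an element $m$ lies in $(M(-1))^-$ if and only if
\[
j\star m \;=\; \chi_\cyc(j)^{-1}(j\cdot m) \;=\; -(j\cdot m)
\]
equals $-m$, i.e.\ if and only if $j\cdot m=m$, i.e.\ if and only if $m\in M^+$. Hence $(M(-1))^-=M^+$ as abelian subgroups of $M$, and in particular the result of twisting again carries the same underlying abelian group.

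The second step checks that the $\Lambda(\widetilde\G)$-module structures coincide. Denoting by $\star'$ the action on $(M(-1))^-(1)$, for any $g\in\widetilde\G$ and $m\in M^+$ one has
\[
g\star' m \;=\; \chi_\cyc(g)\bigl(g\star m\bigr) \;=\; \chi_\cyc(g)\cdot\chi_\cyc(g)^{-1}(g\cdot m) \;=\; g\cdot m,
\]
which is precisely the original action restricted to $M^+$. This shows the equality as $\Lambda(\widetilde\G)$-modules; note this is also consistent with the fact that $j$ now acts as $+1$ on both sides, so the module structure indeed factors through $\Lambda(\G)$ as expected.

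There is no real obstacle here: the lemma is a bookkeeping statement about how the sign coming from $\chi_\cyc(j)=-1$ interchanges the $\pm$-parts after a Tate twist, and the two cancelling twists $(-1)$ and $(1)$ are arranged precisely so that the net effect is the identification of $M^+$ with $(M(-1))^-(1)$.
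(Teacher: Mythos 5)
Your proof is correct and takes essentially the same approach as the paper, which simply writes $(M(-1))^-(1)=(M(-1)(1))^+=M^+$ using $\chi_\cyc(j)=-1$; you have unpacked that one-line chain into the identification of underlying subgroups and the verification that the two cancelling twists restore the original module structure.
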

\begin{proof}
	Since $\chi_{\cyc}(j)=-1$, we have $(M(-1))^-(1)=(M(-1)(1))^+=M^+$.
\end{proof}

\begin{lemma} \label{LambdaG-twist}
	There is an isomorphism $\Lambda(\G)\simeq \Lambda(\widetilde \G)_-(1)$ of $\Lambda(\widetilde \G)$-modules.
\end{lemma}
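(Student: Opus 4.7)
The plan is to construct an explicit $\Lambda(\widetilde\G)$-linear isomorphism $\bar\phi\colon \Lambda(\widetilde\G)_-(1) \to \Lambda(\G)$ using the cyclotomic character. The key observations are that complex conjugation $j$ lies in the centre of $\widetilde\G$---any conjugate of $j$ is again a complex conjugation, hence equals $j$---so $(1+j)$ generates a two-sided ideal of $\Lambda(\widetilde\G)$, and that $\chi_\cyc(j) = -1$ since $j$ inverts $p$-power roots of unity. Thus the $\chi_\cyc^{-1}$-twist should turn the natural surjection $\Lambda(\widetilde\G) \twoheadrightarrow \Lambda(\G)$ (which kills $1-j$) into a surjection that kills $1+j$ instead.

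Concretely, I would define a $\ZZ_p$-algebra homomorphism $\phi\colon \Lambda(\widetilde\G) \to \Lambda(\G)$ on group elements by $\widetilde g \mapsto \chi_\cyc(\widetilde g)^{-1} \bar g$, where $\bar g$ denotes the image of $\widetilde g$ in $\G$. This extends to a ring homomorphism because $\widetilde g \mapsto \chi_\cyc(\widetilde g)^{-1} \bar g$ is a group homomorphism $\widetilde\G \to \Lambda(\G)^\times$. Since $\phi(j) = -1$, we have $\phi(1+j) = 0$, so $\phi$ factors through $\bar\phi\colon \Lambda(\widetilde\G)_- \to \Lambda(\G)$. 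Bijectivity is best checked via an explicit two-sided inverse $\psi\colon \Lambda(\G) \to \Lambda(\widetilde\G)_-$ sending $\bar g \mapsto \chi_\cyc(\widetilde g) \overline{\widetilde g}$ for any lift $\widetilde g$ of $\bar g$: well-definedness follows because replacing $\widetilde g$ by $j\widetilde g$ flips both the sign of $\chi_\cyc$ and the class in $\Lambda(\widetilde\G)_-$, leaving the product unchanged.

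It then remains to verify that $\bar\phi$ is $\Lambda(\widetilde\G)$-linear when the source carries the Tate-twist module structure. By definition, $\widetilde h \star \overline{\widetilde g} = \chi_\cyc(\widetilde h)\overline{\widetilde h \widetilde g}$, so applying $\bar\phi$ yields $\chi_\cyc(\widetilde h)\chi_\cyc(\widetilde h \widetilde g)^{-1} \bar h \bar g = \chi_\cyc(\widetilde g)^{-1} \bar h \bar g = \widetilde h \cdot \bar\phi(\overline{\widetilde g})$, as required. I do not foresee any genuine obstacle; the argument is essentially bookkeeping of signs and twist conventions, the conceptual content being that the Tate twist exactly compensates for the discrepancy between the two canonical ring quotients $\Lambda(\widetilde\G)/(1-j) \cong \Lambda(\G)$ and $\Lambda(\widetilde\G)/(1+j) = \Lambda(\widetilde\G)_-$.
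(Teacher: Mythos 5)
Your construction is correct, and it takes a genuinely more explicit route than the paper. The paper first observes that $\Lambda(\widetilde\G)_-$, and hence its Tate twist, is cyclic as a $\Lambda(\widetilde\G)$-module; since $\chi_\cyc(j)=-1$ cancels the $(-1)$-action of $j$ on $\Lambda(\widetilde\G)_-$, the $\Lambda(\widetilde\G)$-action on $\Lambda(\widetilde\G)_-(1)$ factors through $\Lambda(\G)$, yielding a surjection $\Lambda(\G)\twoheadrightarrow\Lambda(\widetilde\G)_-(1)$; this surjection is then an isomorphism because both sides have the same $\Lambda(\Gamma_0)$-rank, so the kernel would be a torsion submodule of the free $\Lambda(\Gamma_0)$-module $\Lambda(\G)$, hence zero. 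You instead write down the isomorphism directly as the map induced by $\widetilde g\mapsto\chi_\cyc(\widetilde g)^{-1}\bar g$ --- equivalently, the twist automorphism of $\Lambda(\widetilde\G)$ followed by the natural quotient $\Lambda(\widetilde\G)\twoheadrightarrow\Lambda(\G)$ --- supply an explicit two-sided inverse, and check $\Lambda(\widetilde\G)$-linearity against the twisted action by hand; all these verifications are correct. What each buys: your approach is constructive and shows exactly what the isomorphism is, which is genuinely more informative; the trade-off is that it quietly relies on the existence of the continuous twist automorphism $\widetilde g\mapsto\chi_\cyc(\widetilde g)^{-1}\widetilde g$ of $\Lambda(\widetilde\G)$. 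This is standard in Iwasawa theory (it is the usual isomorphism $\Lambda(\widetilde\G)(1)\cong\Lambda(\widetilde\G)$) but is not entirely free: the formula is only given on group elements, and one needs to argue that it extends $\ZZ_p$-linearly and continuously to the completed group algebra, e.g.\ by writing $\Lambda(\widetilde\G)$ as a free $\Lambda(\Gamma_0)$-module on coset representatives and using the commutative substitution $T\mapsto u^{-1}(1+T)-1$ on $\Lambda(\Gamma_0)\simeq\ZZ_p[[T]]$. The paper's surjection-plus-rank argument sidesteps this point entirely, needing only the evident continuous quotient $\Lambda(\widetilde\G)\twoheadrightarrow\Lambda(\G)$, at the cost of not exhibiting the isomorphism explicitly.
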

\begin{proof}
	The ring $\Lambda(\widetilde \G)_-$ has a natural $\Lambda(\widetilde \G)$-module structure via the natural projection $\Lambda(\widetilde \G)\twoheadrightarrow \Lambda(\widetilde \G)_-$. {Therefore} $\Lambda(\widetilde \G)_-$ as a $\Lambda(\widetilde \G)$-module is generated by a single element $m\in \Lambda(\widetilde \G)_-$:
	\begin{align}
		\langle m\rangle_{\Lambda(\widetilde \G)}&=\Lambda(\widetilde \G)_-. \notag\\
		\intertext{Taking Tate twists, we get}
		\label{eq:Lambda-generation}
		\langle m(1)\rangle_{\Lambda(\widetilde \G)}&=\Lambda(\widetilde \G)_-(1).
	\end{align}
	
	The element $j\in\widetilde \G$ acts as $-1$ on $\Lambda(\widetilde \G)_-=\Lambda(\widetilde \G)/(1+j)$. Since $\chi_{\cyc}(j)=-1$, this means that after a Tate twist, $j$ acts trivially on $\Lambda(\widetilde \G)_-(1)$. Since $\G=\widetilde \G/\langle j\rangle$, it follows that the action of $\Lambda(\widetilde \G)$ on $\Lambda(\widetilde \G)_-$ factors through $\Lambda(\G)$.
	
	Consequently, \eqref{eq:Lambda-generation} can be sharpened to $\langle m(1)\rangle_{\Lambda(\G)}=\Lambda(\widetilde \G)_-(1)$, which means that there is a surjection of $\Lambda(\G)$-modules
	\begin{equation} \label{eq:Lambda-generation-2}
		\Lambda(\G)\twoheadrightarrow\Lambda(\widetilde\G)_-(1).
	\end{equation}
	The group $\Gamma_0$ is a central open subgroup both in $\G$ and in $\widetilde\G$, and the ranks of $\Lambda(\G)$ and $\Lambda(\widetilde\G)_-(1)$ as $\Lambda(\Gamma_0)$-modules agree. Therefore \eqref{eq:Lambda-generation-2} is an isomorphism.
\end{proof}

Let $X_S$ denote the $S$-ramified Iwasawa module associated with $L_\infty$. More explicitly, $X_S$ is the Galois group of the maximal abelian pro-$p$ extension of $L_\infty$ unramified outside $S$ over $L_\infty$.
The maximal $j$-invariant submodule of $X_S$ is the $S$-ramified Iwasawa module $X_S^+$ associated with $L_\infty^+$. This is a finitely generated torsion $\Lambda(\Gamma_0)$-module by \cite[Proposition~11.3.1]{NSW}. For each $v\in T$, we fix places $w_\infty$ resp. $\widetilde w_\infty$ of $L^+_\infty$ resp. $L_\infty$ such that $w_\infty\mid\widetilde w_\infty$.

\begin{theorem}[{\cite[Proposition~8.5, Lemma~8.6]{NABS}}]
	There exists a $\Lambda\big(\widetilde\G\big)_-$-module $Y_S^T(-1)$ sitting in an exact sequence of $\Lambda(\widetilde\G)_-$-modules
	\begin{equation} \label{eq:YST-def}
		0\to X_S^+(-1) \to Y_S^T(-1) \to \left(\bigoplus_{v\in T} \ind_{\widetilde\G_{\widetilde w_\infty}}^{\widetilde\G} \ZZ_p(-1) \right)^-\to\ZZ_p(-1)\to0.
	\end{equation}
	Moreover, $Y_S^T(-1)$ has projective dimension {at most} $1$ over $\Lambda(\widetilde \G)_-$, and it is torsion over $\Lambda(\Gamma_0)$.
\end{theorem}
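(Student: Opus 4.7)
The plan is to realise $Y_S^T(-1)$ as the second cohomology of a two-term perfect complex of $\Lambda(\widetilde\G)_-$-modules built from \'etale cohomology, and to derive both the four-term exact sequence and the torsion property from this construction.

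I would start with the complex $C\colonequals R\Gamma(\Spec\OO_{L_\infty,S},\ZZ_p(1))$ and its local analogues $C_v\colonequals R\Gamma(k(\widetilde w_\infty),\ZZ_p(1))$ for $v\in T$, where $k(\widetilde w_\infty)$ is the residue field, all viewed in the derived category of $\Lambda(\widetilde\G)$-modules. A suitable cone construction on the localisation map $C\to\bigoplus_{v\in T} C_v$ produces a $T$-modified complex $C^T$. After applying the minus-part functor (exact since $p$ is odd) and Tate-twisting by $(-1)$, one obtains $D\colonequals (C^T)^-(-1)$, and $Y_S^T(-1)\colonequals H^2(D)$. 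The four-term exact sequence then falls out of the long exact cohomology sequence of the defining triangle via three identifications, all using \cref{lem:minus-Tate} to exchange $\pm$-parts and the Tate twist by $(-1)$: Poitou--Tate duality gives $H^2(C)^-(-1)\cong X_S^+(-1)$; Shapiro's lemma together with the procyclic cohomology of the residue fields gives $\bigl(\bigoplus_{v\in T}\ind^{\widetilde\G}_{\widetilde\G_{\widetilde w_\infty}}\ZZ_p(-1)\bigr)^-$; and the image of $H^1(C)^-(-1)$ provides the rightmost $\ZZ_p(-1)$ via the cyclotomic reciprocity class.

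The main obstacle is the projective dimension bound. The plan is to show that $D$ is a perfect complex of $\Lambda(\widetilde\G)_-$-modules concentrated in cohomological degrees $1$ and $2$; from this, $Y_S^T(-1)=H^2(D)$ appears as a cokernel of a map between two finitely generated projectives, yielding projective dimension at most $1$. Concentration uses the $p$-cohomological dimension bound $\le 2$ for $\Spec\OO_{L_\infty,S}$ combined with the vanishing of $H^0(D)$: since $\mu_{p^\infty}\subseteq L_\infty$ one has $H^0(C)=\ZZ_p(1)$, whose Tate twist by $(-1)$ lies in the $+$-eigenspace of $j$ by \cref{lem:minus-Tate} and is killed by the minus-part functor. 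Perfectness draws on Jannsen-style finiteness results for continuous \'etale cohomology attached to admissible $p$-adic Lie extensions, together with the finite global dimension of $\Lambda(\widetilde\G)_-$ (since $p$ is odd, the $j$-torsion is removed, and $\Gamma_0\le\G$ is open pro-$p$).

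Finally, $\Lambda(\Gamma_0)$-torsion follows by squeezing in the four-term sequence: $X_S^+$ is $\Lambda(\Gamma_0)$-torsion by the weak Leopoldt statement \cite[Proposition~11.3.1]{NSW} cited in the text, $\ZZ_p(-1)$ is torsion since $\Gamma_0$ acts via the nontrivial character $\kappa^{-1}|_{\Gamma_0}$, and each induced module is torsion because every $v\in T$ is unramified in $L_\infty/K$, so that $\widetilde\G_{\widetilde w_\infty}\cap\Gamma_0$ is open in $\Gamma_0$.
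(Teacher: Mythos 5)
The paper itself does not prove this statement; it imports it verbatim from Johnston--Nickel's work (NABS, Proposition~8.5 and Lemma~8.6), where $Y_S^T$ is built via the Ritter--Weiss translation functor and explicit $T$-modified Tate sequences, not via a cone on an \'etale localisation map. Your approach is therefore a genuinely different route, but it has gaps that would need to be closed before it could replace the cited construction.

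The most serious problem is the claim that $\Lambda(\widetilde\G)_-$ has finite global dimension. Taking the minus part only removes the $2$-torsion coming from $j$, and since $p$ is odd this has no bearing on the $p$-local structure. If $p$ divides the order of $\widetilde H = \Gal(L_\infty/K_\infty)$ (which is not excluded by any hypothesis in the theorem), the crossed product $\Lambda(\widetilde\G)_- \cong \Lambda(\Gamma_0)\ast(\widetilde\G^-/\Gamma_0)$ contains a copy of $\ZZ_p[\ZZ/p]$ and hence has infinite global dimension. This is precisely why the paper (and NABS) must supply a dedicated argument for the projective dimension bound rather than appeal to global-dimension finiteness. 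The openness of $\Gamma_0$ and oddness of $p$ are irrelevant to this issue.

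A second gap: even granting that $D$ were perfect and concentrated in cohomological degrees $[1,2]$, so that $D\simeq[P^1\to P^2]$ with both terms finitely generated projective, one cannot conclude that $H^2(D)$ has projective dimension $\le 1$ unless $H^1(D)=0$. Otherwise the image $P^1/H^1(D)\hookrightarrow P^2$ is a quotient of a projective, not automatically projective, so the two-term resolution of $H^2(D)$ fails. You address the vanishing of $H^0(D)$ via the minus projector, but say nothing about $H^1(D)$; the four-term exact sequence you quote does not account for it.

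Finally, the cone direction needs scrutiny. With $C^T$ the fibre of $C\to\bigoplus_v C_v$, the long exact sequence reads (after applying the exact functor $(-)^-(-1)$ and using that residue fields have $p$-cohomological dimension $\le 1$)
\[
\bigoplus_v H^1(C_v)^-(-1)\longrightarrow H^2(C^T)^-(-1)\longrightarrow H^2(C)^-(-1)\longrightarrow 0,
\]
so your candidate $Y_S^T(-1)=H^2(C^T)^-(-1)$ \emph{surjects} onto $X_S^+(-1)$, whereas the theorem requires $X_S^+(-1)$ to be a \emph{submodule} of $Y_S^T(-1)$. One would need to build the modification on the Pontryagin/Artin--Verdier dual side (the paper's $C_S^\bullet$ is a dual complex) or change the direction of the cone; as stated, the directions do not match the sequence~\eqref{eq:YST-def}. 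The torsion argument at the end is essentially correct, though what one really uses is that $v\nmid p$ (forcing the Frobenius to generate an open subgroup of $\Gamma_0$), not unramifiedness per se.
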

\begin{remark} \label{rem:YST-T}
	Note that our assumptions on $S$ and $T$ are sufficient for this result. Indeed, the proof relies only upon Assumptions~(i--iv) on \cite[1254]{NABS}; our setup covers (i--iii), and (iv) follows from the fact that $T$ is non-empty and it contains no $p$-adic places. {See also \cref{rem:T-hyp}.}
\end{remark}

Since the module $Y_S^T(-1)$ has a projective resolution of length one over $\Lambda(\widetilde \G)_-$, there exist a projective $\Lambda\big(\widetilde\G\big)_-$-module $P$ and an integer $n\ge0$ such that there is an exact sequence
\begin{equation} \label{eq:proj-res-1}
	0\to P \xrightarrow{} \left(\Lambda\big(\widetilde\G\big)_-\right)^n\to Y_S^T(-1)\to0.
\end{equation}
Since $Y_S^T(-1)$ is torsion over $\Lambda(\Gamma_0)$, the difference of the $K_0$-classes over $\Lambda(\widetilde\G)_-$ of the first two terms in this short exact sequence becomes zero over the ring of quotients $\Q(\widetilde\G)_-$:
\begin{align*}
	K_0\left(\Lambda\big(\widetilde\G\big)_-\right) &\hookrightarrow K_0\left(\Q\big(\widetilde\G\big)_-\right), \\
	\left[\Lambda\big(\widetilde\G\big)_-^n\right]-[P] &\mapsto 0.
\end{align*}
Here the map is injective by a result of Witte, see \eqref{Witte}.
Hence the first two terms of the projective resolution \eqref{eq:proj-res-1} are stably isomorphic over $\Lambda(\widetilde\G)_-$. That is, there is a projective $\Lambda(\widetilde\G)_-$-module $Q$ such that $\Lambda\big(\widetilde\G\big)_-^n\oplus Q\simeq P\oplus Q$. 

Since projective modules are direct summands of free modules, by possibly enlarging $Q$, there are isomorphisms
$\Lambda\big(\widetilde\G\big)_-^m\simeq \Lambda\big(\widetilde\G\big)_-^n\oplus Q\simeq P\oplus Q$ for some $m\ge0$. Consequently, there is a projective resolution
\[0\to \left(\Lambda\big(\widetilde\G\big)_-\right)^m \xrightarrow{} \left(\Lambda\big(\widetilde\G\big)_-\right)^m\to Y_S^T(-1)\to0.\]
Twist by $(+1)$, using \cref{LambdaG-twist}:
\begin{equation} \label{eq:YST-resolution}
	0\to \Lambda(\G)^m \xrightarrow{\alpha} \Lambda(\G)^m\to Y_S^T\to0,
\end{equation}
where $\alpha$ is some $m\times m$ matrix. This allows us to associate with $Y_S^T$ a class in the relative $K$-group $K_0(\Lambda(\G),\Q(\G))$: considering $[Y_S^T]$ as a complex concentrated in degree $0$, the resolution \eqref{eq:YST-resolution} shows that in the derived category $\DC(\Lambda(\G))$, this is isomorphic to 
\[
\left[Y_S^T\right]=\Bigcomplex{\Lambda(\G)^m}{-1}{\alpha}{\Lambda(\G)^m}{0},
\]
where the complex on the right is concentrated in degrees $-1$ and $0$. In particular, $[Y_S^T]$ is in $\DCpt(\Lambda(\G))$, and thus it defines a class in $K_0(\Lambda(\G),\Q(\G))$.

\subsection{The smoothed equivariant main conjecture}
Let us first recall the equivariant Iwasawa main conjecture (EIMC) with uniqueness and without smoothing, as formulated in \cite[Conjecture~4.3]{NABS} and \cite[Conjecture~5.4]{UAEIMC}. The original formulations are due to Ritter--Weiss \cite{RW-MC} and Kakde \cite[Theorem~2.11]{Kakde}; we refer to \cite{Venjakob-RWK} for a comparison of these approaches.

\begin{definition} Consider the following complex:
	\[C_S^\bullet(L_\infty^+/K)\colonequals \mathrm{R}\!\Hom\left( \mathrm{R}\Gamma_\eet\left(\Spec \OO_{L_\infty^+,S}, \underline{\QQ_p/\ZZ_p}\right), \QQ_p/\ZZ_p\right),\]
	where $\OO_{L_\infty^+,S}$ is the ring of $S$-integers of $L_\infty^+$, consisting of elements of $L_\infty^+$ with non-negative valuation away from $S$, and $\underline{\QQ_p/\ZZ_p}$ is a constant sheaf.
\end{definition}

The nonzero cohomology groups of the complex $C_S^\bullet(L_\infty^+/K)$ are $H^{-1}(C_S^\bullet(L_\infty^+/K))\simeq X_S^+$ and $H^0(C_S^\bullet(L_\infty^+/K))\simeq \ZZ_p$.
Thus $C_S^\bullet(L_\infty^+/K)$ is a complex in $\DCpt(\Lambda(\G))$. In particular, it defines a class in $K_0(\Lambda(\G),\Q(\G))$.

\begin{conjecture}[non-smoothed EIMC with uniqueness] \label{EIMCu}
	Let $L/K$ be any finite Galois CM extension with $S$ a finite set of places as above.
	There exists a unique element $\zeta_S(L_\infty^+/K)\in K_1(\Q(\G))$ such that $\nr\left(\zeta_S(L_\infty^+/K)\right)=\Phi_S(L^+_\infty/K)$. Moreover,  $\partial\left(\zeta_S(L_\infty^+/K)\right)=-[C_S^\bullet(L_\infty^+/K)]$, where $[C_S^\bullet(L_\infty^+/K)]$ is the class of $C_S^\bullet(L_\infty^+/K)$ in $K_0(\Lambda(\G),\Q(\G))$.
	\[
	\begin{tikzcd}[every label/.append style = {font = \small}]
		& K_1\big(\Q(\G)\big) \arrow[rr, "\partial"]  &  & {K_0(\Lambda(\G),\Q(\G))} \\
		\exists!\, \zeta_S(L_\infty^+/K) \arrow[d, maps to] \arrow[rrr, maps to] & &  & -\left[C_S^\bullet(L_\infty^+/K)\right] \\
		\Phi_S(L^+_\infty/K) & \cent(\Q(\G))^\times  \arrow[from=1-2, "\nr" near start, crossing over] &  &
	\end{tikzcd}
	\]
\end{conjecture}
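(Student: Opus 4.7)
The plan is to prove uniqueness and existence separately, following the strategy of Ritter--Weiss and Kakde.

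\textbf{Uniqueness.} The first step is to show that the reduced norm map $\nr\colon K_1(\Q(\G))\to\cent(\Q(\G))^\times$ is injective, i.e.\ $\SK_1(\Q(\G))=0$. Via the Wedderburn decomposition of $\Q(\G)$ from \cite{W}, this reduces to the vanishing of $\SK_1(\sD)$ for each skew field $\sD$ appearing as a simple factor. Each such $\sD$ is a central simple algebra of finite dimension over a complete discretely valued field (compare \cref{sec:skew-fields-over-local-fields}), and the required vanishing is a standard input going back to Platonov, and exploited in the Iwasawa-theoretic setting by Ritter--Weiss in \cite{TEIT-II}.

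\textbf{Existence.} The plan is an abelian reduction in the spirit of Burns--Kato--Kakde. For a suitable family of open subgroups $U\le\G$ whose abelianisations $U^\ab$ collectively detect all $\chi\in\Irr(\G)$, one constructs a class $\zeta_U\in K_1(\Q(U^\ab))$ by applying the classical main conjecture of Wiles to the abelian quotient. Compatibility of the family $\{\zeta_U\}$ under norm and restriction maps is encoded in Kakde's logarithmic congruences, whose validity allows the $\zeta_U$ to descend to a unique element $\zeta_S\in K_1(\Q(\G))$. Once $\zeta_S$ is produced, the identity $\nr(\zeta_S)=\Phi_S(L_\infty^+/K)$ follows from the characterwise interpolation of \cref{thm:CNDR} via the isomorphism of \cref{thm:RW-iso}; and the equality $\partial(\zeta_S)=-[C_S^\bullet(L_\infty^+/K)]$ follows from Artin--Verdier duality, together with the identifications $H^{-1}(C_S^\bullet)\simeq X_S^+$ and $H^0(C_S^\bullet)\simeq\ZZ_p$, which translate the assertion into the statement of the characterwise main conjecture character-by-character.

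\textbf{Main obstacle.} The hardest step is the verification of Kakde's congruences between the abelian $\zeta_U$. The only proofs presently available require the vanishing of Iwasawa's $\mu$-invariant for $L_\infty^+/K$; without $\mu=0$, descent produces at best a candidate in $K_1$ of a localisation of $\Lambda(\G)$ strictly smaller than $\Q(\G)$, so the uniqueness required by the statement is lost. The alternative route of Johnston--Nickel circumvents the $\mu=0$ hypothesis under the structural assumption that $\G$ has an abelian $p$-Sylow subgroup, reducing the descent to a more algebraic K-theoretic argument in which the relevant congruences become automatic. In full generality the conjecture remains open, the principal obstruction being the absence of an analytic mechanism to establish Kakde's congruences unconditionally.
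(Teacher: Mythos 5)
The statement you were asked to prove is labelled a \emph{conjecture} in the paper, and the paper gives no proof of it: \cref{EIMCu} is stated as an open hypothesis, and the paper only records (in \cref{sec:known-results}) that it is a theorem in the $\mu=0$ case by Ritter--Weiss and Kakde, and in the abelian $p$-Sylow case by Johnston--Nickel. Your write-up is therefore correctly calibrated: it is not, and does not claim to be, a proof, but rather a survey of the two known strategies (Kakde-style descent through abelian subquotients under $\mu=0$; the Johnston--Nickel reduction under the abelian $p$-Sylow hypothesis) together with an accurate statement of where the general case breaks down, namely the unconditional verification of Kakde's congruences. The uniqueness reduction to $\SK_1(\Q(\G))=1$ via the Wedderburn decomposition is also consistent with how the paper treats uniqueness (see the remark following \cref{EIMCu}). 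In short: there is no ``paper proof'' to compare against, your account matches the literature the paper cites, and you rightly conclude the statement remains conjectural in general.
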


\begin{remark}
	It is known that the boundary homomorphism is surjective, see \eqref{Witte}.
	Vanishing of $\SK_1(\Q(\G))=\ker(\nr)$ is equivalent to the uniqueness part of the conjecture.
\end{remark}

According to \cite[Proposition~8.5]{NABS}, the middle two terms of \eqref{eq:YST-def}, considered as $0$th resp. $(-1)$st terms of a complex, represent $C_S^\bullet(L_\infty^+/K)(-1)$:
\begin{equation*}
	\Hugecomplex{Y_S^T(-1)}{-1}{}{\left(\displaystyle\bigoplus_{v\in T} \ind_{\widetilde\G_{\widetilde w_\infty}}^{\widetilde\G} \ZZ_p(-1) \right)^-}{0}=\left[C_S^\bullet(L_\infty^+/K)(-1)\right]\in K_0\left(\Lambda\big(\widetilde \G\big)_-,\Q\big(\widetilde \G\big)_-\right).
\end{equation*}
Tate twisting this by $+1$ yields the following, using \cref{lem:minus-Tate,LambdaG-twist}:
\begin{equation} \label{eq:C-bullet-rep}
	\Hugecomplex{Y_S^T}{-1}{}{\left(\displaystyle\bigoplus_{v\in T} \ind_{\widetilde\G_{\widetilde w_\infty}}^{\widetilde\G} \ZZ_p \right)^+}{0}=\left[C_S^\bullet(L_\infty^+/K)\right]\in K_0\left(\Lambda(\G),\Q(\G)\right).
\end{equation}
Note that for all $v\in T$, we have $\left(\ind_{\widetilde\G_{\widetilde w_\infty}}^{\widetilde\G} \ZZ_p \right)^+\simeq \ind_{\G_{w_\infty}}^{\G} \ZZ_p$. Moreover, there is a short exact sequence
\begin{equation} \label{eq:Lemma-8.4-plus}
	0\to \Lambda\left(\G_{w_\infty}\right) \xrightarrow{1-\phi_{w_\infty}} \Lambda\left(\G_{w_\infty}\right) \to \ZZ_p\to0,
\end{equation}
where the labelled arrow is right multiplication by $\left(1-\phi_{\widetilde w_\infty}\right)$.
These factors correspond exactly to those by which we perturbed the analytic $p$-adic $L$-function in \cref{def:PhiST}.
In light of this, we posit the following smoothed version of \cref{EIMCu}:

\begin{conjecture}[smoothed EIMC with uniqueness] \label{sEIMCu} 
	Let $L/K$ be as above, and let $S$ and $T$ be finite non-empty disjoint sets of places of $K$, with $S$ containing all places ramifying in $L^+_\infty/K$ and all infinite places.
	Then there is a unique element $\zeta_S^T(L_\infty^+/K)\in K_1(\Q(\G))$ such that $\nr\left(\zeta_S^T(L_\infty^+/K)\right)=\Phi_S^T$. Moreover, $\partial\left(\zeta_S^T(L_\infty^+/K)\right)=[Y_S^T]$.
	\[
	\begin{tikzcd}[every label/.append style = {font = \small}]
		& K_1\big(\Q(\G)\big) \arrow[rr, "\partial"]  &  & {K_0(\Lambda(\G),\Q(\G))} \\
		\exists!\, \zeta_S^T \arrow[d, maps to] \arrow[rrr, maps to] & &  & \left[Y_S^T\right] \\
		\Phi_S^T(L_\infty^+/K) & \cent(\Q(\G))^\times  \arrow[from=1-2, "\nr" near start, crossing over] &  &
	\end{tikzcd}
	\]
\end{conjecture}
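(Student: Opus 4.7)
I plan to deduce \cref{sEIMCu} from the non-smoothed version \cref{EIMCu} by a direct construction on both the analytic and algebraic sides. Assuming \cref{EIMCu}, I would start from the given element $\zeta_S \in K_1(\Q(\G))$ satisfying $\nr(\zeta_S) = \Phi_S$ and $\partial(\zeta_S) = -[C_S^\bullet]$, and set
\[\zeta_S^T \colonequals \zeta_S \cdot \prod_{v \in T} [1 - \phi_{w_\infty}] \in K_1(\Q(\G)),\]
where each factor is the class of the corresponding $1 \times 1$ matrix in $K_1(\Q(\G))$, exactly as in \cref{def:PhiST}. It is immediate from multiplicativity of $\nr$ and \cref{def:PhiST} that
\[\nr(\zeta_S^T) = \nr(\zeta_S) \cdot \prod_{v \in T} \nr(1 - \phi_{w_\infty}) = \Phi_S \cdot \prod_{v \in T} \nr(1 - \phi_{w_\infty}) = \Phi_S^T,\]
so the reduced-norm half of the conjecture holds by construction.

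For the boundary identity $\partial(\zeta_S^T) = [Y_S^T]$, additivity of $\partial$ gives
\[\partial(\zeta_S^T) = -[C_S^\bullet] + \sum_{v \in T} \partial([1 - \phi_{w_\infty}]).\]
The key computation is to identify each $\partial([1 - \phi_{w_\infty}])$ in $K_0(\Lambda(\G), \Q(\G))$: by \eqref{eq:def-partial} it is represented by the two-term complex $[\Lambda(\G) \xrightarrow{\cdot (1 - \phi_{w_\infty})} \Lambda(\G)]$, whose cokernel is $\ind_{\G_{w_\infty}}^\G \ZZ_p$ by \eqref{eq:Lemma-8.4-plus} combined with induction from $\G_{w_\infty}$ to $\G$. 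Feeding this into the formula above, and using \eqref{eq:C-bullet-rep} to expand $[C_S^\bullet]$ as the class of the two-term complex with terms $Y_S^T$ and $(\bigoplus_{v \in T}\ind_{\widetilde\G_{\widetilde w_\infty}}^{\widetilde\G} \ZZ_p)^+$, additivity in the relative $K_0$-group makes the induced-module contributions cancel, leaving exactly $[Y_S^T]$.

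Uniqueness is then automatic from the uniqueness clause in \cref{EIMCu}, since in both the smoothed and non-smoothed settings uniqueness of the zeta element with prescribed reduced norm is equivalent to the vanishing of $\SK_1(\Q(\G))$. The main technical subtlety I expect to encounter lies in the careful bookkeeping of signs and shift conventions in the relative $K_0$-group: \eqref{eq:def-partial} places its two-term complex in degrees $(-2, -1)$, whereas \eqref{eq:C-bullet-rep} and \eqref{eq:YST-resolution} naturally place their complexes in degrees $(-1, 0)$, and reconciling these requires the shift relation $[C^\bullet[1]] = -[C^\bullet]$ to be applied consistently to every term in the calculation. Aside from this bookkeeping the argument is entirely formal, and the conceptual content is the observation that the analytic Euler factors defining $\Phi_S^T$ out of $\Phi_S$ match, under $\partial$, the algebraic Euler factors relating $Y_S^T$ to $C_S^\bullet$ in \eqref{eq:C-bullet-rep}.
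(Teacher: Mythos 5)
Your proposal is a correct reconstruction of the paper's own argument (Lemma~\ref{EIMCu-implies-sEIMCu}): it sets $\zeta_S^T \colonequals \zeta_S \cdot \prod_{v\in T}[1-\phi_{w_\infty}]$, checks the reduced-norm identity by multiplicativity, establishes $\partial(\zeta_S^T)=[Y_S^T]$ by cancelling the induced-module terms from \eqref{eq:C-bullet-rep} against the $\partial$-images of the smoothing factors via \eqref{eq:Lemma-8.4-plus}, and reduces uniqueness to $\SK_1(\Q(\G))=1$. The only thing the paper records that you do not is the converse implication (sEIMCu $\Rightarrow$ EIMCu, by the same computation with $\zeta_S \colonequals \zeta_S^T\cdot\prod(1-\phi_{w_\infty})^{-1}$), but for the purpose of justifying the conjecture conditionally on \cref{EIMCu} the forward direction is exactly what is needed, and your attention to the degree-shift sign conventions is precisely the right concern.
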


\begin{lemma} \label{EIMCu-implies-sEIMCu}
	Let $L/K$, $S$ and $T$ be as in \cref{sEIMCu}. Then \cref{EIMCu} is equivalent to \cref{sEIMCu}.
\end{lemma}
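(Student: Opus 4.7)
The plan is to establish the equivalence by setting up a bijective correspondence between candidate elements $\zeta_S$ and $\zeta_S^T$ via multiplication by the smoothing factor
\[ \eta \colonequals \prod_{v \in T}[1-\phi_{w_\infty}] \in K_1(\Q(\G)), \]
and then checking that both the reduced norm condition and the boundary condition transfer cleanly under this correspondence.

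For the forward direction, assuming \cref{EIMCu}, I would set $\zeta_S^T \colonequals \zeta_S \cdot \eta$. Multiplicativity of $\nr$ together with \cref{def:PhiST} immediately gives
\[ \nr(\zeta_S^T) = \nr(\zeta_S)\cdot\prod_{v\in T}\nr(1-\phi_{w_\infty}) = \Phi_S\cdot\prod_{v\in T}\nr(1-\phi_{w_\infty}) = \Phi_S^T, \]
so the interpolation property transfers at once. For the boundary condition, $\partial(\zeta_S^T) = \partial(\zeta_S) + \partial(\eta) = -[C_S^\bullet(L_\infty^+/K)] + \partial(\eta)$, so the proof reduces to establishing the identity
\[ [Y_S^T] + [C_S^\bullet(L_\infty^+/K)] = \partial(\eta) \quad\text{in } K_0(\Lambda(\G),\Q(\G)). \]

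To prove this identity I would compute each side separately using a short exact sequence. For the right-hand side, inducing \eqref{eq:Lemma-8.4-plus} from $\G_{w_\infty}$ to $\G$ produces the projective resolution $0\to\Lambda(\G)\xrightarrow{1-\phi_{w_\infty}}\Lambda(\G)\to\ind_{\G_{w_\infty}}^\G\ZZ_p\to 0$; combined with \eqref{eq:def-partial} this yields $\partial([1-\phi_{w_\infty}]) = [\ind_{\G_{w_\infty}}^\G\ZZ_p]$ and therefore $\partial(\eta) = \sum_{v\in T}[\ind_{\G_{w_\infty}}^\G\ZZ_p]$. For the left-hand side, I would use the representation \eqref{eq:C-bullet-rep} of $C_S^\bullet$ by the two-term complex $[Y_S^T \to \bigoplus_{v\in T}\ind_{\G_{w_\infty}}^\G\ZZ_p]$, whose cohomology is computed from the defining exact sequence \eqref{eq:YST-def} after twisting; Waldhausen additivity then gives $[C_S^\bullet] = \sum_{v\in T}[\ind_{\G_{w_\infty}}^\G\ZZ_p] - [Y_S^T]$, which rearranges to exactly the desired identity. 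The reverse implication is entirely symmetric, taking $\zeta_S \colonequals \zeta_S^T \cdot \eta^{-1}$, and uniqueness transfers because left multiplication by $\eta$ is a bijection of $K_1(\Q(\G))$, so the obstruction $\SK_1(\Q(\G)) = \ker(\nr)$ coincides on both sides.

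There is no single hard step here; the argument is essentially a bookkeeping exercise aligning the smoothing factors on the analytic side with the corresponding modifications on the algebraic side. The main technical care is ensuring consistency of sign conventions in $K_0(\Lambda(\G),\Q(\G))$: the class of a complex with non-projective torsion terms (as in \eqref{eq:C-bullet-rep}) and the class $\partial([M])$ obtained via a lift placed in prescribed degrees (as in \eqref{eq:def-partial}) are computed according to conventions that must be applied uniformly. Fortunately both sides of the key identity reduce to the same combination of the induced classes $[\ind_{\G_{w_\infty}}^\G\ZZ_p]$, so the signs are forced to agree.
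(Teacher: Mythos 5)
Your proof is correct and takes essentially the same route as the paper: set $\zeta_S^T = \zeta_S \cdot \prod_{v\in T}[1-\phi_{w_\infty}]$, verify the reduced norm condition by multiplicativity and \cref{def:PhiST}, and verify the boundary condition by combining \eqref{eq:C-bullet-rep} with \eqref{eq:Lemma-8.4-plus} so that the induced classes $[\ind_{\G_{w_\infty}}^\G\ZZ_p]$ cancel, with the reverse direction obtained symmetrically. Your repackaging of the boundary computation as the standalone identity $[Y_S^T] + [C_S^\bullet] = \partial(\eta)$ in $K_0(\Lambda(\G),\Q(\G))$ is a cosmetic reorganisation of the paper's chain of equalities rather than a different argument, and your treatment of the uniqueness clause (via bijectivity of multiplication by $\eta$) amounts to the same observation as the paper's remark that both uniqueness statements are equivalent to $\SK_1(\Q(\G))=1$.
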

\begin{proof}
	The ingredients of the proof are all contained in the paragraphs preceding \cref{sEIMCu}.	
	We first deal with the existence and ``moreover'' parts of the two main conjectures. Assume that \cref{EIMCu} holds. Then let 
	\[\zeta_S^T\colonequals \zeta_S\cdot\prod_{v\in T} \left[(1-\phi_{w_\infty})\right]\in K_1(\Q(\G)).\]
	Its reduced norm is
	\[\nr(\zeta_S^T)=\nr(\zeta_S)\cdot\prod_{v\in T}\nr(1-\phi_{w_\infty})=\Phi_S\cdot\prod_{v\in T}\nr(1-\phi_{w_\infty})=\Phi_S^T,\]
	as desired. We compute the image under the boundary homomorphism $\partial$.
	\begingroup \allowdisplaybreaks\begin{align*}
		\partial\left(\zeta_S^T\right)
		&=\partial\left(\zeta_S\right)+\sum_{v\in T}\partial\left(1-\phi_{w_\infty}\right) \\
		&=-\left[C^\bullet(L_\infty^+/K)\right]+\sum_{v\in T}\partial\left(1-\phi_{w_\infty}\right) \\
		\intertext{The equality \eqref{eq:C-bullet-rep} in $K_0(\Lambda(\G),\Q(\G))$ {allows us to} rewrite the first term as the complex represented by ${Y_S^T} \to {\left(\bigoplus_{v\in T} \ind_{\widetilde\G_{w_\infty}}^{\widetilde\G} \ZZ_p \right)^+}$. {By \eqref{eq:C-bullet-rep}, this can be written as follows.}}
		&=\left[Y_S^T\right]-\Hugecompl{\displaystyle\bigoplus_{v\in T} \ind_{\widetilde\G_{\widetilde w_\infty}}^{\widetilde\G} \ZZ_p }{0}+\sum_{v\in T}\partial\left(1-\phi_{w_\infty}\right) \\
		\intertext{We rewrite the sum using the definition of $\partial$ and the isomorphism \eqref{eq:K0-rel-K0-iso}.}
		&=\left[Y_S^T\right]-\sum_{v\in T}\Hugecompl{\ind_{\widetilde\G_{\widetilde w_\infty}}^{\widetilde\G} \ZZ_p }{0}+\sum_{v\in T} \Bigcomplex{\Lambda(\G)}{-2}{1-\phi_{w_\infty}}{\Lambda(\G)}{-1}
	\end{align*}\endgroup
	Using the exact sequence \eqref{eq:Lemma-8.4-plus}, we find that the two summations cancel each other out, and we get $\partial\left(\zeta_S^T\right)=\left[Y_S^T\right]$, as desired. In conclusion, \cref{EIMCu} implies \cref{sEIMCu} without uniqueness. By the same computation, \cref{sEIMCu} implies \cref{EIMCu} without uniqueness by setting 
	\[\zeta_S\colonequals \zeta_S^T\cdot\prod_{v\in T} \left[(1-\phi_{w_\infty})\right]^{-1}\in K_1(\Q(\G)).\]
	
	Lastly, the two uniqueness statements are equivalent to each other, because they are both equivalent to $\SK_1(\Q(\G))=1$.
\end{proof}

\subsection{The equivariant \texorpdfstring{$p$}{p}-adic Artin conjecture for \texorpdfstring{$\zeta_S^T$}{zeta\_S\^T}}

We also formulate an integrality conjecture regarding the element $\zeta_S^T$, in the setup of \cref{sEIMCu}.
\begin{conjecture}\label{integrality-zeta}
	Assume \cref{sEIMCu} without assuming uniqueness; let $\zeta_S^T\in K_1(\Q(\G))$ such that $\nr\zeta_S^T=\Phi_S^T$ and $\partial(\zeta_S^T)=[Y_S^T]$. Let $\mathfrak M$ be a $\Lambda(\Gamma_0)$-order in $\Q(\G)$ containing $\Lambda(\G)$. Then $\zeta_S^T$ is in the image of the natural map
	\[\mathfrak M\cap \Q(\G)^\times \to K_1(\Q(\G)).\]
\end{conjecture}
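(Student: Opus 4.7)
The approach is to use the smoothed main conjecture \cref{sEIMCu} to produce a matrix representative of $\zeta_S^T$ with entries in $\Lambda(\G) \subseteq \mathfrak M$, and then apply a dimension-reduction argument to obtain a $1 \times 1$ invertible representative in $\mathfrak M$. For the first step, the resolution \eqref{eq:YST-resolution} of the Johnston--Nickel module yields a matrix $\alpha \in M_m(\Lambda(\G)) \cap \GL_m(\Q(\G))$ whose class in $K_1(\Q(\G))$ satisfies $\partial([\alpha]) = [Y_S^T]$. Since $\partial(\zeta_S^T) = [Y_S^T]$ by hypothesis, the element $\zeta_S^T \cdot [\alpha]^{-1}$ lies in $\ker(\partial)$, which by the localisation sequence \eqref{Witte} coincides with the image of $K_1(\Lambda(\G))$ in $K_1(\Q(\G))$. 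Hence $\zeta_S^T = u \cdot [\alpha]$ for some $u$ in this image, and therefore $\zeta_S^T$ is already represented by an invertible matrix with entries in $\Lambda(\G) \subseteq \mathfrak M$.

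For the second step, I would adapt Nichifor--Palvannan's dimension-reduction strategy to the present semidirect product setting. The key is to use the Wedderburn decomposition of $\Q(\G)$ obtained in \cite{W} to break the matrix representative into its simple components $M_{n_i}(D_i)$, and then analyse the maximal $\Lambda(\Gamma_0)$-order in each $D_i$ via Hasse's theory of skew fields over local fields (\cref{sec:skew-fields-over-local-fields}) together with Venjakob's noncommutative Weierstraß theory (\cref{WDT,WPT}). Combined with Jacobson's elementary reduction theorem (\cref{thm:Jacobson-reduction}), these tools should permit elementary operations that diagonalise each block, after which the Dieudonné determinant produces a single element of $\mathfrak M \cap \Q(\G)^\times$ whose class in $K_1(\Q(\G))$ equals $\zeta_S^T$, as desired.

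The hard part will be the dimension reduction in the semidirect product case. In contrast to Nichifor--Palvannan's direct product setup, where the maximal order decomposes cleanly as a direct sum of matrix rings over well-understood local orders, the semidirect product case introduces nontrivial Schur indices and twist data in the Wedderburn components of $\Q(\G)$. Controlling the interaction between the componentwise Weierstraß preparation and the elementary operations over the full order $\mathfrak M$---while simultaneously tracking the $K_1(\Lambda(\G))$ contribution from $u$---is the main technical challenge. For this reason I would expect the approach to yield the conjecture only for maximal $\mathfrak M$; the general case, and in particular the strongest variant with $\mathfrak M = \Lambda(\G)$, appears accessible only in special situations such as the $p$-abelian case.
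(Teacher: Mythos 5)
Your outline is essentially the paper's own argument for the part of the statement that it actually proves. Note first that \cref{integrality-zeta} is stated as a conjecture and remains open in general; the paper establishes it only for maximal $\Lambda(\Gamma_0)$-orders (\cref{thm:integrality} and \cref{cor:integrality}) and, with $\mathfrak M=\Lambda(\G)$, in the $p$-abelian case (\cref{p-abelian-integrality}). Your first step --- extract $\alpha\in M_m(\Lambda(\G))\cap\GL_m(\Q(\G))$ from the resolution \eqref{eq:YST-resolution}, match $\partial(\zeta_S^T)$ with $\partial([\alpha])$, and use the localisation sequence \eqref{Witte} plus Vaserstein's isomorphism \eqref{eq:Dieudonne-det-iso} to absorb the discrepancy into a $K_1(\Lambda(\G))$ factor represented in $\Lambda(\G)$ --- is precisely the opening of the paper's proof of \cref{thm:integrality}, and your second step (Wedderburn decomposition, skew power series Weierstraß theory, Jacobson's elementary reduction, Dieudonné determinant) is exactly the dimension reduction of \cref{2.13} applied component-wise. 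You correctly anticipate that this only yields the conjecture for maximal $\mathfrak M$ and that $\mathfrak M=\Lambda(\G)$ is accessible only in special situations; the one small ingredient you elide is the passage from the specific maximal order $\mathfrak M(\G)$ built from the $\Sigma_\chi$'s to arbitrary maximal orders, which the paper handles via conjugacy of maximal orders in \cref{cor:integrality}.
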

\begin{remark}
	\Cref{integrality-zeta} is formulated without assuming uniqueness. However, if it holds for one $\zeta^T_S$ satisfying the conditions in the main conjecture, then it holds for all such elements. Indeed, let $\zeta^T_S,\zeta'^T_S \in K_1(\Q(\G))$ such that $\partial(\zeta_S^T)=\partial(\zeta'^T_S)$. Using the localisation sequence \eqref{Witte}, we find a class $[\beta]\in K_1(\Lambda(\G))$ such that $\zeta'^T_S=\zeta^T_S\cdot[\beta]$. Then Vaserstein's theorem \eqref{eq:Dieudonne-det-iso} allows us to find a representative $b\in\Lambda(\G)$ of $\det\beta$ in $\Lambda(\G)$. So if $x\in \mathfrak M\cap\Q(\G)^\times$ {is} a preimage of $\zeta^T_S$ under the natural map in the conjecture, then $\zeta'^T_S$ is the image of $xb$. We will encounter the same argument again in the proof of \cref{thm:integrality}.
\end{remark}

\begin{lemma} \label{lem:integrality-equivalence}
	Assume \cref{sEIMCu} without uniqueness. Then the following hold:
	\begin{enumerate}[label=(\roman*),ref=\roman*]
		\item \label{item:integrality-equivalence-1} \Cref{integrality-zeta} implies \cref{integrality-Phi}.
		\item \label{item:integrality-equivalence-2} If in addition, we assume uniqueness {in the main conjecture}, then \cref{integrality-Phi} implies \cref{integrality-zeta}.
	\end{enumerate}
\end{lemma}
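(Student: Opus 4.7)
The plan is to observe that both implications follow essentially formally from the definition of the composite map in \cref{integrality-Phi} and the characterisation of uniqueness in the smoothed EIMC.

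For \eqref{item:integrality-equivalence-1}, the strategy is to exploit the fact that the composite map in \cref{integrality-Phi} factors through $K_1(\Q(\G))$ by construction. The assumption of \cref{sEIMCu} without uniqueness provides some $\zeta_S^T \in K_1(\Q(\G))$ with $\nr(\zeta_S^T) = \Phi_S^T$ and $\partial(\zeta_S^T) = [Y_S^T]$, and \cref{integrality-zeta} asserts that this element is the image of some $x \in \mathfrak M \cap \Q(\G)^\times$ under $\mathfrak M \cap \Q(\G)^\times \to K_1(\Q(\G))$. Applying $\nr$ to both sides immediately shows $\Phi_S^T$ is the image of $x$ under the composite map, yielding \cref{integrality-Phi}.

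For \eqref{item:integrality-equivalence-2}, the key ingredient is the reformulation of uniqueness as the vanishing $\SK_1(\Q(\G)) = 1$, noted in the remark following \cref{EIMCu}. Equivalently, $\nr : K_1(\Q(\G)) \to \cent(\Q(\G))^\times$ is injective. Assuming \cref{integrality-Phi}, pick $x \in \mathfrak M \cap \Q(\G)^\times$ whose image under the composite map equals $\Phi_S^T$, and let $[x] \in K_1(\Q(\G))$ denote its image under the first arrow. Then $\nr([x]) = \Phi_S^T = \nr(\zeta_S^T)$, so injectivity of $\nr$ forces $[x] = \zeta_S^T$. Hence $\zeta_S^T$ is in the image of $\mathfrak M \cap \Q(\G)^\times \to K_1(\Q(\G))$, establishing \cref{integrality-zeta}.

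There is no significant obstacle here: both directions are near-tautological once one recognises that the composite in \cref{integrality-Phi} is precisely $\nr \circ (\cdot)$ applied to the image in $K_1(\Q(\G))$, and that the uniqueness hypothesis in \cref{sEIMCu} is exactly the statement that this reduced norm map is injective. The asymmetry between the two parts—namely, why uniqueness is needed only in \eqref{item:integrality-equivalence-2}—reflects the fact that producing a preimage for $\Phi_S^T$ does not a priori produce one for a specific lift $\zeta_S^T$, and one needs injectivity of $\nr$ to identify the given $\zeta_S^T$ with the $K_1$-class of the constructed preimage.
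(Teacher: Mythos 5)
Your proof is correct and follows essentially the same route as the paper: part (i) is the tautological observation that a preimage of $\zeta_S^T$ under the natural map also gives a preimage of $\Phi_S^T=\nr(\zeta_S^T)$ under the composite, and part (ii) uses uniqueness—equivalently, injectivity of $\nr$, i.e.\ $\SK_1(\Q(\G))=1$—to identify the $K_1$-class of the given preimage of $\Phi_S^T$ with $\zeta_S^T$. The paper phrases part (ii) directly in terms of the uniqueness of $\zeta_S^T$ rather than via $\SK_1$, but the two formulations are equivalent and the argument is the same.
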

\begin{proof}
	(\ref{item:integrality-equivalence-1}) Let $\zeta_S^T$ be a preimage of $\Phi_S^T$. If this is in the image of the natural map, then $\nr(\zeta_S^T)=\Phi_S^T$ is in the image of the composite map \eqref{nr-composite}, so \cref{integrality-zeta} implies \cref{integrality-Phi}. 
	
	(\ref{item:integrality-equivalence-2}) For the converse, let $\zeta_S^T$ be the unique preimage of $\Phi_S^T$ under the reduced norm map, and let $x\in \mathfrak M\cap\Q(\G)^\times$ be a preimage of $\Phi_S^T$ under the map \eqref{nr-composite}. Then the class $[(x)]\in K_1(\Q(\G))$ is also a preimage of $\Phi_S^T$ under the reduced norm, therefore $[(x)]=\zeta_S^T$ by uniqueness of $\zeta_S^T$.
\end{proof}

\begin{remark}
	One can also consider {the following} weakening of the above conjectures without involving an order $\mathfrak M$.
	That is, one may ask whether $\Phi_S^T$ comes from a $1\times 1$ matrix over $\Q(\G)$, or more explicitly, whether it is in the image of the map
	\begin{equation} \label{eq:composite-Q}
		\Q(\G)^\times\to K_1(\Q(\G))\xrightarrow{\nr} \cent(\Q(\G))^\times.
	\end{equation}
	This is not known in general. {In fact}, it is true that $\Phi_S^T$ is in the image of \eqref{eq:composite-Q} if and only if it is in the image of $\nr: K_1(\Q(\G))\to \cent(\Q(\G))^\times$. 
    {(In particular, $\Phi_S^T$ is in the image of \eqref{eq:composite-Q} if \cref{sEIMCu} holds.)}
    {To verify the claimed equivalence, consider the following commutative diagram coming from \eqref{eq:nr-det}:}
	\[
	\begin{tikzcd}
		\Q(\G)^\times \arrow[rrrdd, "\nr_{\Q(\G)/\cent(\Q(\G))}", bend left] \arrow[rd, two heads] \arrow[rdd, two heads, bend right] &  &  & \\
		& K_1(\Q(\G)) \arrow[d, "\det"', "\sim"] \arrow[rrd, "\nr"] &  & \\
		& \left(\Q(\G)^\times\right)^\ab \arrow[rr, "\nr_{\Q(\G)/\cent(\Q(\G))}"'] &  & \cent(\Q(\G))^\times
	\end{tikzcd}
	\]
	The vertical arrow is an isomorphism by \eqref{eq:Dieudonne-det-iso}. Since the map $\Q(\G)^\times\twoheadrightarrow \left(\Q(\G)^\times\right)^\ab$ is surjective, so is the natural map $\Q(\G)^\times\to K_1(\Q(\G))$ by commutativity of the left triangle. The claimed equivalence follows.
\end{remark}

\subsection{A maximal order main conjecture}
The {so-called} ``maximal order main conjecture'' (a theorem of Ritter and Weiss) admits a smoothed version. We first recall the statement in the non-smoothed case.
\begin{theorem}[{\cite[Theorem~16, Remark~(H)]{TEIT-II},  \cite[\S4.5]{hybrid}}] \label{MOMC}
	Let $\mathfrak M$ be a maximal $\Lambda(\Gamma_0)$-order in $\Q(\G)$ containing $\Lambda(\G)$. If $x_S\in K_1(\Q(\G))$ is such that $\partial(x_S)=-\left[C_S^\bullet(L_\infty^+/K)\right]$, then $\nr(x_S)\Phi_S^{-1}\in \cent(\mathfrak M)^\times$.
\end{theorem}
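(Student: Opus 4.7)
The plan is to exploit the localisation sequence of $K$-theory together with the characterwise main conjecture applied to each simple component of $\mathfrak M$. The key reduction is as follows: if I can produce a single element $\tilde x_S \in K_1(\Q(\G))$ lying in the image of $K_1(\mathfrak M)$ and satisfying both $\nr(\tilde x_S) = \Phi_S$ and $\partial(\tilde x_S) = -[C_S^\bullet(L_\infty^+/K)]$, then for any other preimage $x_S$ as in the statement, the quotient $x_S \tilde x_S^{-1}$ lies in $\ker\partial$, and hence in the image of $K_1(\Lambda(\G))$ by the localisation sequence \eqref{Witte}. Since $\Lambda(\G) \subseteq \mathfrak M$, \cref{MO10.1} guarantees that the reduced norm of any such element lies in $\nr(\mathfrak M^\times) \subseteq \cent(\mathfrak M)^\times$, and so $\nr(x_S)\Phi_S^{-1} \in \cent(\mathfrak M)^\times$.

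To construct $\tilde x_S$, I would use the Wedderburn decomposition of $\Q(\G)$, which via \cref{MO10.5ii} induces a decomposition $\mathfrak M = \bigoplus_i \mathfrak M_i$ of the maximal order into maximal orders in the simple components. The simple components are indexed by Galois orbits of characters $\chi \in \Irr(\G)$, and via the isomorphism of \cref{thm:RW-iso} the projection of $\Phi_S$ to the component corresponding to $\chi$ is essentially the characterwise $p$-adic $L$-function $G_{\chi,S}(\gamma-1)/H_\chi(\gamma-1)$ from \cref{def:RWPhi}. By the refined $p$-adic Artin conjecture (\cref{padicArtin2}, proved unconditionally by Ritter--Weiss), $G_{\chi,S}$ is an integral power series in $\OO_{\QQ_p(\chi)}[[X]]$, and the characterwise main conjecture provided by Wiles identifies the corresponding class in $K_1$ of the $\chi$-component with a preimage of the $\chi$-part of $-[C_S^\bullet]$ under the local boundary map. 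Gluing these characterwise elements together using the Wedderburn decomposition, and adjusting by the explicit description of the skew field components in \S\ref{sec:skew-fields-over-local-fields} (in particular the cyclic algebra presentation and the formula \eqref{eq:Reiner-splitting} for the reduced norm), produces the desired element $\tilde x_S$ as the image of some $y \in K_1(\mathfrak M)$.

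The main technical difficulty is the passage from the characterwise main conjecture, which naturally lives over $\QQ_p^\al$-valued characters (and therefore over $\Q^\al(\G)$), back to an element of $K_1$ of the $\Lambda(\Gamma_0)$-order $\mathfrak M$ itself. This requires tracking Galois descent for the type W action and the Galois action on characters spelled out in \cref{def:Hom-star}, as well as accounting for nontrivial Schur indices in components where $\G$ is not a direct product. The essential content of this argument appears in \cite[Theorem~16, Remark~(H)]{TEIT-II} and is revisited in \cite[\S4.5]{hybrid}, so I would structure the exposition as a recollection of these results, invoking the integrality of the characterwise $L$-functions as the crucial input and the localisation sequence as the bridge from $\mathfrak M$ to arbitrary preimages $x_S$.
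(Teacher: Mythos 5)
There is a genuine gap in the very first step of your reduction, and it is an important one. You propose to produce $\tilde x_S \in K_1(\Q(\G))$ which lies in the image of $K_1(\mathfrak M)$ and simultaneously satisfies $\nr(\tilde x_S)=\Phi_S$. But these two requirements are incompatible: if $\tilde x_S$ is the image of some class in $K_1(\mathfrak M)$, represented by $\beta\in\GL_n(\mathfrak M)$, then both $\beta$ and $\beta^{-1}$ lie in $M_n(\mathfrak M)$, so by \cref{MO8.7} and \cref{MO10.1} both $\nr(\beta)$ and $\nr(\beta)^{-1}$ lie in $\cent(\mathfrak M)$, forcing $\nr(\tilde x_S)\in\cent(\mathfrak M)^\times$. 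However, $\Phi_S$ is \emph{not} in $\cent(\mathfrak M)^\times$: in the component belonging to the trivial character, $\Phi_S$ corresponds to $G_{\one,S}(\gamma-1)/H_\one(\gamma-1)$ with $H_\one(\gamma-1)=\gamma-1$, and since $G_{\one,S}(0)\ne 0$ (the $p$-adic zeta function has a pole at $s=1$), this is a genuinely non-integral element of $\Frac(\ZZ_p[[\gamma-1]])$. This pole is precisely the reason the paper introduces the smoothing set $T$; the unsmoothed $\Phi_S$ has unavoidable denominators, and the statement of \cref{MOMC} is carefully phrased as an assertion about the \emph{ratio} $\nr(x_S)\Phi_S^{-1}$ rather than about either factor individually.

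The correct route (the one taken in the cited sources) therefore cannot go through a global integral lift $\tilde x_S$. Instead, one takes the given $x_S$ with $\partial(x_S)=-[C_S^\bullet]$ and compares $\nr(x_S)$ with $\Phi_S$ componentwise in $\cent(\Q(\G))=\bigoplus_\chi\cent(D_\chi)$. For each $\chi$, both $j_\chi(\nr(x_S))$ and $j_\chi(\Phi_S)$ may have the same pole (at $H_\chi$), and the characterwise main conjecture of Wiles identifies their ratio with a unit in the one-variable Iwasawa algebra $\OO_{\QQ_p(\chi)}[[X]]$; one then descends to $\OO_{\QQ_{p,\chi}}[[(1+X)^e-1]]$ using the $\Hom^*$-compatibilities of \cref{def:Hom-star}. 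There is no EIMC-type statement being proved along the way, which is the point: constructing an element with $\nr(\tilde x_S)=\Phi_S$ and $\partial(\tilde x_S)=-[C_S^\bullet]$ is exactly \cref{EIMCu}, and requiring in addition that it come from $K_1(\mathfrak M)$ is strictly stronger than \cref{EIMCu}. Your reduction would thus derive the maximal order main conjecture from something that is either false (the integrality of $\Phi_S$) or an open conjecture (the EIMC). The rest of your sketch — the use of the Wedderburn decomposition, the refined $p$-adic Artin conjecture, and the Galois descent issues — points at the right ingredients, but they should be used to control the ratio $\nr(x_S)\Phi_S^{-1}$ directly rather than to build a single integral $\tilde x_S$.
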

\begin{corollary}[smoothed maximal order main conjecture] \label{smoothed-MO-EIMC}
	Let $\mathfrak M$ be a maximal $\Lambda(\Gamma_0)$-order in $\Q(\G)$ containing $\Lambda(\G)$. If $x_S^T\in K_1(\Q(\G))$ is such that $\partial(x_S^T)=\left[Y_S^T\right]$, then $\nr(x_S^T)(\Phi_S^T)^{-1}\in \cent(\mathfrak M)^\times$.
\end{corollary}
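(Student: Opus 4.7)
The plan is to reduce the smoothed statement to the non-smoothed one (\cref{MOMC}) by exploiting the fact that the smoothing factors $\nr(1-\phi_{w_\infty})$ appear identically on the analytic and algebraic sides, hence cancel when forming the ratio $\nr(x_S^T)(\Phi_S^T)^{-1}$.

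More precisely, given $x_S^T\in K_1(\Q(\G))$ with $\partial(x_S^T)=[Y_S^T]$, I would set
\[
x_S\colonequals x_S^T \cdot \prod_{v\in T}\left[(1-\phi_{w_\infty})\right]^{-1} \in K_1(\Q(\G)).
\]
The computation carried out in the proof of \cref{EIMCu-implies-sEIMCu}, combined with the exact sequence \eqref{eq:Lemma-8.4-plus} and the representation \eqref{eq:C-bullet-rep} of $C_S^\bullet(L_\infty^+/K)$, shows (run backwards) that $\partial(x_S)=-[C_S^\bullet(L_\infty^+/K)]$. Invoking \cref{MOMC}, we therefore obtain
\[
\nr(x_S)\,\Phi_S^{-1}\in\cent(\mathfrak M)^\times.
\]

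It remains to relate this to the smoothed quantity. Using multiplicativity of the reduced norm on $K_1$ and the definition \ref{def:PhiST} of $\Phi_S^T$, we have
\[
\nr(x_S^T) = \nr(x_S)\cdot\prod_{v\in T}\nr(1-\phi_{w_\infty}), \qquad \Phi_S^T = \Phi_S \cdot \prod_{v\in T}\nr(1-\phi_{w_\infty}),
\]
so that
\[
\nr(x_S^T)(\Phi_S^T)^{-1} = \nr(x_S)\,\Phi_S^{-1} \in\cent(\mathfrak M)^\times,
\]
as desired.

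There is essentially no obstacle here beyond bookkeeping: the argument is a direct transport of \cref{MOMC} along the identity between smoothed and non-smoothed data already established in \cref{EIMCu-implies-sEIMCu}. The only mild care needed is to ensure that the smoothing factors $[(1-\phi_{w_\infty})]$ are bona fide elements of $K_1(\Q(\G))$ (they are, by construction in \cref{def:PhiST}) so that the manipulation $x_S^T\mapsto x_S$ makes sense at the level of $K$-theory.
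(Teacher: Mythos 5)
Your proposal is correct and takes essentially the same approach as the paper: define $x_S\colonequals x_S^T\cdot\prod_{v\in T}[(1-\phi_{w_\infty})]^{-1}$, reuse the boundary computation from \cref{EIMCu-implies-sEIMCu} to get $\partial(x_S)=-[C_S^\bullet(L_\infty^+/K)]$, apply \cref{MOMC}, and observe that the smoothing factors cancel in the ratio $\nr(x_S^T)(\Phi_S^T)^{-1}=\nr(x_S)\Phi_S^{-1}$.
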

\begin{proof}
	Let $x_S^T$ be as in the statement, and let
	\[x_S\colonequals x_S^T\cdot \prod_v[(1-\phi_{w_\infty})]^{-1}\in K_1(\Q(\G)).\]
	Then the same argument as in the proof of \cref{EIMCu-implies-sEIMCu} shows that
	\[\partial(x_S)=\partial(x^T_S)+\sum_{v\in T}\partial([1-\phi_{w_{\infty}}]^{-1})=[Y^T_S]-\sum_{v\in T}\partial(1-\phi_{w_\infty})=-\left[C_S^\bullet(L_\infty^+/K)\right].\]
	Consequently, $\nr(x^T_S)(\Phi_S^T)^{-1} = \nr(x_S ) (\Phi_S)^{-1} \in \cent(\mathfrak M)^\times$ by \cref{MOMC}.
\end{proof}

\begin{corollary} \label{cor:Phi-M}
	Let $\mathfrak M$ be a maximal $\Lambda(\Gamma_0)$-order in $\Q(\G)$ containing $\Lambda(\G)$. Then $\Phi_S^T\in \cent(\mathfrak M)$.
\end{corollary}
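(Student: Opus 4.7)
The plan is to use the resolution of $Y_S^T$ constructed in \eqref{eq:YST-resolution} together with the smoothed maximal order main conjecture (\cref{smoothed-MO-EIMC}) to produce an element of $\cent(\mathfrak M)$ that differs from $\Phi_S^T$ by a unit in $\cent(\mathfrak M)^\times$.

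First I would take the matrix $\alpha\in M_m(\Lambda(\G))$ appearing in the presentation \eqref{eq:YST-resolution} of $Y_S^T$. Since $\alpha$ becomes invertible after inverting central regular elements (as $Y_S^T$ is $\Lambda(\Gamma_0)$-torsion), its class $[\alpha]\in K_1(\Q(\G))$ is defined, and by the definition of the connecting homomorphism in \eqref{eq:def-partial} we have $\partial([\alpha])=[Y_S^T]$. Thus \cref{smoothed-MO-EIMC} applied with $x_S^T=[\alpha]$ yields
\[
\nr([\alpha])\cdot(\Phi_S^T)^{-1}\in\cent(\mathfrak M)^\times.
\]

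The main step is to show $\nr([\alpha])\in\cent(\mathfrak M)$. Decompose $\Q(\G)=\bigoplus_i \sA_i$ into simple components with centres $\sF_i$, and write $\sR_i$ for the integral closure of $\Lambda(\Gamma_0)$ in $\sF_i$. By \cref{MO10.5ii} the maximal order $\mathfrak M$ decomposes compatibly as $\bigoplus_i\Delta_i$ with each $\Delta_i$ a maximal $\sR_i$-order in $\sA_i$, and by \cref{MO8.7} the matrix ring $M_m(\Delta_i)$ is a maximal $\sR_i$-order in $M_m(\sA_i)$. Since $\Lambda(\G)\subseteq\mathfrak M$, the matrix $\alpha$ lies in $M_m(\mathfrak M)=\bigoplus_i M_m(\Delta_i)$, and \cref{MO10.1} applied componentwise implies that its reduced characteristic polynomial, and hence its reduced norm, has coefficients in $\bigoplus_i\sR_i=\cent(\mathfrak M)$.

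Combining the two displayed containments gives $\Phi_S^T=\nr([\alpha])\cdot u^{-1}$ for some $u\in\cent(\mathfrak M)^\times$, so $\Phi_S^T\in\cent(\mathfrak M)$. The only real obstacle is the identification $\cent(\mathfrak M)=\bigoplus_i\sR_i$, but this is a standard consequence of the fact that the centre of a maximal order in a central simple algebra is the integral closure of the base ring in the centre of that algebra, applied to each Wedderburn component via \cref{MO10.5ii}.
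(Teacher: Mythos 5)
Your proposal is correct and follows essentially the same route as the paper: take the matrix $\alpha$ from the resolution \eqref{eq:YST-resolution}, observe $\partial([\alpha])=[Y_S^T]$, apply \cref{smoothed-MO-EIMC} to get $\nr(\alpha)\cdot(\Phi_S^T)^{-1}\in\cent(\mathfrak M)^\times$, and conclude by integrality of $\nr(\alpha)$ via \cref{MO8.7} and \cref{MO10.1}. You are somewhat more explicit than the paper in spelling out that \cref{MO10.1} (stated for a central simple algebra) must be applied component by component in the Wedderburn decomposition, using \cref{MO10.5ii} to decompose $\mathfrak M$ first; this is a welcome clarification, but it is the same argument.
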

\begin{proof}
	Let $\alpha\in M_m(\Lambda(\G))$ be as in \eqref{eq:YST-resolution}. On the one hand, since $M_m(\Lambda(\G))\subseteq M_m(\mathfrak M)$, and the latter is a maximal $\Lambda(\G)$-order in $M_m(\Q(\G))$ by \cref{MO8.7}, the reduced norm $\nr(\alpha)$ is contained in $\mathfrak M$ by \cref{MO10.1}.
	On the other hand, $\partial([(\alpha)])=[Y_S^T]$, and so  \cref{smoothed-MO-EIMC} is applicable: $\nr(\alpha)\cdot (\Phi_S^T)^{-1}\in \cent(\mathfrak M)^\times$. The assertion follows.
\end{proof}

\begin{remark}
	The integrality statement of \cref{cor:Phi-M} is unconditional. It can be seen as a special case of the equivariant $p$-adic Artin conjecture.
	Indeed, let $\mathfrak M$ be a $\Lambda(\Gamma_0)$-order in $\Q(\G)$ containing $\Lambda(\G)$. Assuming \cref{integrality-Phi} for this $\mathfrak M$, we obtain that $\Phi_S^T\in \cent(\mathfrak M)$ because of \cref{MO10.1}.
\end{remark}

\begin{remark} \label{rem:epAC-AC}
	We demonstrate how the equivariant $p$-adic Artin conjecture (\cref{integrality-Phi}) is analogous to its non-equivariant counterparts. Let $\mathfrak M$ be a maximal $\Lambda(\Gamma_0)$-order in $\Q(\G)$ containing $\Lambda(\G)$, and assume the validity of \cref{integrality-Phi} for this $\mathfrak{M}$ (see \cref{sec:integrality} for {cases in which this is known to hold}). Then \cref{cor:Phi-M} shows that $\Phi^T_S\in\cent(\mathfrak M)$. Let $\ZZ_p^\al$ denote the integral closure of $\ZZ_p$ in $\QQ_p^\al$. The map in \cref{thm:RW-iso} induces an isomorphism $\cent(\mathfrak M)\simeq \Hom^*(R_p(\G), \ZZ_p^\al\otimes_{\ZZ_p} \Lambda(\Gamma))$, as shown by Ritter--Weiss in \cite[Remark~(H)]{TEIT-II}. Consulting \cref{def:PhiST}, one sees that this map sends $\Phi^T_S$ to the map given by
	\[\chi\mapsto \frac{G_{\chi,S}(\gamma-1)}{H_\chi(\gamma-1)}\cdot\delta_T(\gamma-1),\]
	where 
    \[\delta_T(\gamma-1)=\prod_{v\in T} \det\left(1-\phi_{w_\infty}\,\middle|\,\Hom_{\QQ^\al_p[H]}(V_\chi,\Q(\G))\right),\] 
    see \cite[p.~558]{TEIT-II} or \cite[\S4.5]{NABS}. It follows that $G_{\chi,S}(\gamma-1)$ is integral up to the factor $\delta_T(\gamma-1)/H_\chi(\gamma-1)$, which can be made explicit by choosing the smoothing set $T$ appropriately, {as we will now show}.
	
	Indeed, consider the case of a type~W character, that is, when $\ker\chi\supseteq H$, and assume that $\chi$ is non-trivial. In this case, $\chi$ factors through a finite abelian group, so in particular, it is linear. Using Tchebotaryov's density theorem, one finds a place $v$ of $K$ such that $\phi_{w_\infty}$ is conjugate to $\gamma$ in $\G$. Set $T\colonequals\{v\}$. Then \[\delta_T(\gamma-1)=\det\left(1-\gamma\,\middle|\,\Hom_{\QQ^\al_p[H]}(V_\chi,\Q(\G))\right)=1-\gamma\chi(\gamma^{-1}).\]
	Indeed, if $f\in\Hom_{\QQ^\al_p[H]}(V_\chi,\Q(\G))$, then \[((1-\gamma)f)(v)=f(v)-(\gamma f)(v)=f(v)-\gamma\cdot f(\gamma^{-1} v)=f(v)-\gamma\cdot f\left(\chi(\gamma^{-1} v)\right) = \left(1-\gamma \chi(\gamma)^{-1}\right)\cdot f(v).\]
	Since $\chi$ is of type W, we have $H_\chi(\gamma-1)=\chi(\gamma)\gamma-1$. Passing to power series by replacing $\gamma\mapsto 1+X$, we find that $\delta_T(X)/H_\chi(X)$ has constant term
	\[\frac{1-\chi(\gamma)^{-1}}{\chi(\gamma)-1} = \frac{\chi(\gamma)\cdot \left(1-\chi(\gamma)^{-1}\right)}{\chi(\gamma)\cdot\left(\chi(\gamma)-1\right)}=\frac{1}{\chi(\gamma)}.\]
	The denominator on the left hand side is nonzero because $\chi$ is non-trivial.
	Since $\chi(\gamma)$ is a $p$-power root of unity, the constant term {of $\delta_T(X)/H_\chi(X)$} is invertible, and thus $\delta_T(X)/H_\chi(X)$ is a unit in $\ZZ_p^\al[[X]]$. Hence $G_{\chi,S}(X)\in\ZZ_p^\al[[X]]$, and since $\chi$ is invariant under $\Gal(\QQ_p^\al/\QQ_p(\chi))$, this even shows $G_{\chi,S}(X)\in \OO_{\QQ_p(\chi)}[[X]]$, as stated by the refined $p$-adic Artin conjecture (\cref{padicArtin2}).
\end{remark}

\section{Behaviour of the equivariant \texorpdfstring{$p$}{p}-adic Artin conjecture under changing \texorpdfstring{$S$}{S}, \texorpdfstring{$T$}{T}, and \texorpdfstring{$\G$}{G}} \label{sec:generalities-on-integrality} \label{sec:changing-top-field}
We note some generalities regarding \cref{integrality-Phi}. 
We first describe how the conjecture behaves under adding more places to either $S$ or $T$. 
We will work with a fixed extension $\LL/K$ as in \cref{sec:smoothed-eq-p-adic-Artin-L}. For the sake of brevity, we will suppress the extension in our notation, and simply write $\Phi_S^T\colonequals \Phi_S^T(\LL/K)$. {Recall that $S$ and $T$ are finite sets of places of $K$, with $S$ containing all places ramifying in $\LL/K$ as well as all infinite places, and $T$ being nonempty and disjoint to $S$.}

\begin{lemma} \label{claim:independence-of-T}
	Let $\mathfrak M\subset \Q(\G)$ be a $\Lambda(\Gamma_0)$-order in $\Q(\G)$ containing $\Lambda(\G)$. Let $S$ and $T$ satisfy the hypotheses above, and let $v^*$ be a place of $K$ not in $S\cup T$. If $\Phi_S^T$ is in the image of the map \eqref{nr-composite}, then so is $\Phi_S^{T\cup \{v^*\}}$.
\end{lemma}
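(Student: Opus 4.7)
The approach I would take is essentially algebraic and should be a one-step verification once the definitions are unwound. By \cref{def:PhiST}, the smoothed $L$-function depends multiplicatively on the smoothing set, so the two versions differ only by a single Euler-type factor at $v^*$:
\[\Phi_S^{T\cup\{v^*\}} = \Phi_S^T \cdot \nr(1-\phi_{w^*_\infty}),\]
where $w^*_\infty$ is a chosen place of $\LL$ above $v^*$, and $\nr(1-\phi_{w^*_\infty})$ is the reduced norm of the $1\times 1$ matrix $(1-\phi_{w^*_\infty})\in \Q(\G)^\times$. Implicit in the very fact that $\Phi_S^{T\cup\{v^*\}}$ lies in $\cent(\Q(\G))^\times$ is that $1-\phi_{w^*_\infty}$ is a non-zero-divisor in $\Lambda(\G)$, hence a unit in $\Q(\G)$.

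Now suppose $\Phi_S^T$ is in the image of the composite \eqref{nr-composite}, and let $u\in \mathfrak M\cap \Q(\G)^\times$ be such that $\nr([u])=\Phi_S^T$, where $[u]$ denotes the class of the $1\times 1$ matrix $(u)$ in $K_1(\Q(\G))$. The natural choice of preimage for $\Phi_S^{T\cup\{v^*\}}$ is
\[u' \colonequals u\cdot (1-\phi_{w^*_\infty}).\]
Since $\mathfrak M$ is a ring containing $\Lambda(\G)$, both factors lie in $\mathfrak M$, so $u'\in\mathfrak M$; since each factor is a unit in $\Q(\G)$, so is $u'$. Thus $u'\in \mathfrak M\cap \Q(\G)^\times$, and
\[\nr([u'])=\nr([u])\cdot \nr([1-\phi_{w^*_\infty}]) = \Phi_S^T\cdot \nr(1-\phi_{w^*_\infty}) = \Phi_S^{T\cup\{v^*\}},\]
using multiplicativity of reduced norms on $K_1(\Q(\G))$.

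There is no real obstacle here: the only genuine input is multiplicativity of $\Phi_S^{(-)}$ in $T$, which is built into \cref{def:PhiST}, together with the observation that $\mathfrak M$ being a $\Lambda(\Gamma_0)$-order containing $\Lambda(\G)$ is closed under multiplication by the integral element $1-\phi_{w^*_\infty}$. The one point worth mentioning explicitly in the write-up is that $1-\phi_{w^*_\infty}$ is a unit in $\Q(\G)$, so that the product $u'$ remains in $\Q(\G)^\times$ and the reduced norm of its class in $K_1(\Q(\G))$ is defined and factors multiplicatively.
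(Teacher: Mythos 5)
Your proof is correct and follows essentially the same route as the paper's: both unwind \cref{def:PhiST} to get $\Phi_S^{T\cup\{v^*\}}=\Phi_S^T\cdot\nr(1-\phi_{w^*_\infty})$, and both observe that $1-\phi_{w^*_\infty}\in\Lambda(\G)\subseteq\mathfrak M$ is a unit in $\Q(\G)$, so multiplying a preimage of $\Phi_S^T$ by it produces a preimage of $\Phi_S^{T\cup\{v^*\}}$ in $\mathfrak M\cap\Q(\G)^\times$. Your write-up is a touch more explicit in constructing the preimage $u'=u(1-\phi_{w^*_\infty})$, but there is no substantive difference.
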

\begin{proof}
	By \cref{def:PhiST}, the relationship between the two smoothed equivariant $p$-adic $L$-functions is $\Phi_S^{T\cup\{v^*\}}=\Phi_S^T\cdot \nr\big(1- \phi_{w^*_\infty}\big)$ where $w^*_\infty$ is the (previously) fixed place of $L_\infty^+$ above $v^*$.
	The first term is in the image of the map by assumption, while the second one is the reduced norm of the class of the map given by left multiplication by the element $\big(1-\phi_{w^*_\infty}\big)\in\Lambda(\G)$. This element is invertible in $\Q(\G)$, and $\Lambda(\G)\subseteq\mathfrak M$, hence the claim.
\end{proof}

\begin{lemma} \label{claim:independence-of-S}
	Retain the assumptions of \cref{claim:independence-of-T}. Then if $\Phi_S^T$ is in the image of the map \eqref{nr-composite}, then so is $\Phi_{S\cup \{v^*\}}^{T}$.
\end{lemma}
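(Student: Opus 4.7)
The plan mirrors that of \cref{claim:independence-of-T}: since adjoining $v^*$ to $S$ does not alter the $T$-smoothing factor of \cref{def:PhiST}, we have $\Phi^T_{S\cup\{v^*\}}/\Phi^T_S=\Phi_{S\cup\{v^*\}}/\Phi_S$, so it suffices to realise this ratio as the reduced norm of an element $\mathfrak e_{v^*}\in\Lambda(\G)\cap\Q(\G)^\times\subseteq\mathfrak M\cap\Q(\G)^\times$. Granted this, a preimage $x\in\mathfrak M\cap\Q(\G)^\times$ of $\Phi^T_S$ provided by the hypothesis immediately yields the preimage $x\cdot\mathfrak e_{v^*}$ of $\Phi^T_{S\cup\{v^*\}}$ under \eqref{nr-composite}.

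Via the isomorphism of \cref{thm:RW-iso} combined with \cref{def:RWPhi}, the ratio $\Phi_{S\cup\{v^*\}}/\Phi_S$ corresponds to the character-indexed map $\chi\mapsto G_{\chi,S\cup\{v^*\}}(\gamma-1)/G_{\chi,S}(\gamma-1)$, which by the interpolation property of \cref{thm:CNDR} realises the $p$-adic interpolation of the classical Euler factor $\det\bigl(1-\phi_{w^*_\infty}\mathfrak N(v^*)^{-s}\bigm|V_\chi\bigr)$. The hypothesis $v^*\notin S$, in conjunction with $S\supseteq S_p\cup S_\infty$, guarantees that $v^*$ is a finite place unramified in $\LL/K$ whose residue characteristic differs from $p$; hence $\phi_{w^*_\infty}\in\G\subseteq\Lambda(\G)$ and $\mathfrak N(v^*)\in\ZZ_p^\times$. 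Writing the image of $\phi_{w^*_\infty}$ in $\Gamma$ as $\gamma^{r(v^*)}$ for a unique $r(v^*)\in\ZZ_p$, the function $s\mapsto\mathfrak N(v^*)^{-s}=\kappa(\gamma)^{-r(v^*)s}$ is represented by the unit $\gamma^{-r(v^*)}\in\Lambda(\Gamma)^\times$ under the standard identification $\Lambda(\Gamma)\simeq\ZZ_p[[X]]$ with $\gamma\leftrightarrow 1+X$. Consequently, the characterwise Euler factors assemble into an element $\mathfrak e_{v^*}\in\Lambda(\G)$, which is invertible in $\Q(\G)$ since each Euler factor is a non-zero element of $\Q^{\al}(\Gamma)^\times$.

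The main technical point is the precise identification of $\mathfrak e_{v^*}$ inside $\Lambda(\G)$. The map $j_\chi$ of \eqref{eq:j-chi-def} is defined through the Ritter--Weiss element \eqref{eq:RW-gamma-chi}, which twists $\gamma$ by the action of $\chi$, so the semidirect product decomposition of $\phi_{w^*_\infty}$ in $\G=H\rtimes\Gamma$ has to be reconciled with this twist when one traces the Euler factor back through each Wedderburn component of $\Q(\G)$. The reconciliation is routine, but the bookkeeping is the only step requiring care; once it is carried out, the proof is finished in a single line in analogy with \cref{claim:independence-of-T}.
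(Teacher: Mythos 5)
Your strategy is the same as the paper's: isolate the ratio $\Phi_{S\cup\{v^*\}}^T/\Phi_S^T=\Phi_{S\cup\{v^*\}}/\Phi_S$, realise it as the reduced norm of some $\mathfrak e_{v^*}\in\Lambda(\G)\cap\Q(\G)^\times$, and multiply a preimage of $\Phi_S^T$ by $\mathfrak e_{v^*}$. The gap is that you never actually exhibit $\mathfrak e_{v^*}$, and what you say about it is not quite enough to pin it down. The paper does this in one stroke: the element is $1-\phi_{w^*_\infty}\,\chi_\cyc(\phi_{w^*_\infty})^{-1}$, where $\chi_\cyc(\phi_{w^*_\infty})=\mathfrak N(v^*)\in\ZZ_p^\times$ is a scalar (this is precisely why $v^*\notin S$, hence $v^*$ prime to $p$ and unramified, matters), so the element manifestly lies in $\Lambda(\G)$. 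The identity $\Phi_{S\cup\{v^*\}}=\Phi_S\cdot\nr\bigl(1-\phi_{w^*_\infty}\chi_\cyc(\phi_{w^*_\infty})^{-1}\bigr)$ is then imported from the literature (Proposition~3.2.13 and Lemma~4.2.1 of \cite{AMG}). Your paragraph about $j_\chi$ and the Ritter--Weiss element sketches how one might rederive this identity, but you defer it as ``routine bookkeeping'' when it is in fact the entire content of the lemma; the paper itself does not rederive it but cites a reference.

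There is also a technical imprecision in your sketch: the equality $\mathfrak N(v^*)^{-s}=\kappa(\gamma)^{-r(v^*)s}$ suppresses the Teichmüller component of $\mathfrak N(v^*)\in\ZZ_p^\times$. The right-hand side only captures $\kappa(\phi_{w^*_\infty})^{-s}$, whereas the full Euler factor requires the $\omega$-part as well, which is what produces the $\chi_\cyc(\phi_{w^*_\infty})^{-1}=\mathfrak N(v^*)^{-1}$ coefficient rather than merely the image of $\phi_{w^*_\infty}$ in $\Gamma$. In the interpolation this discrepancy is absorbed by the $\omega^{-n}$-twist, but if you intend to rederive the identity from scratch rather than citing it, you must track both components. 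As written, the crucial step — producing and verifying $\mathfrak e_{v^*}$ explicitly — is asserted but not carried out.
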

\begin{proof}
	Noting that $\chi_{\cyc}(\phi_{w^*_\infty})=\mathfrak N(v^*)$, {where $\mathfrak N$ denotes the absolute norm}, we have $\Phi^T_{S\cup\{v^*\}}=\Phi^T_S\cdot\nr\left(1-\phi_{w^*_\infty} \chi_{\cyc}(\phi_{w^*_\infty})^{-1}\right)$. Indeed, this can be seen e.g. by comparing Proposition~3.2.13 and Lemma~4.2.1 of \cite{AMG}. Then the same argument as in \cref{claim:independence-of-T} works.
\end{proof}

Consider the setup for changing the top field in \cref{RWL-functoriality}.
In particular, if $L/L'/K$ is a tower of fields such that $L'/K$ and $L/K$ both satisfy the assumptions at the beginning of \cref{sec:MC}, then  $\LL=L^+_\infty$ and $\LL'=L'^+_\infty$ fit into this setup.

There is a natural projection $\pi:\G\twoheadrightarrow\G'$ which induces $\Q(\G)\twoheadrightarrow\Q(\G')$ and $K_1(\pi):K_1(\Q(\G))\to K_1(\Q(\G'))$.
Let $\mathfrak M$ be a $\Lambda(\Gamma_0)$-order in $\Q(\G)$ containing $\Lambda(\G)$, and let $\mathfrak M'\colonequals\pi(\mathfrak M)$. Then by surjectivity of the natural maps, $\mathfrak M'$ is a $\Lambda(\Gamma_0)$-order in $\Q(\G')$ containing $\Lambda(\G')$. Write $\Phi_S^T\colonequals \Phi_S^T(\LL/K)$ and $\Phi'^T_S\colonequals\Phi_S^T(\LL'/K)$ for brevity.

\begin{proposition} \label{changing-top-field}
	If $\Phi_S^T$ is in the image of the composite map 
	\[\mathfrak M\cap \Q(\G)^\times\to K_1(\Q(\G))\xrightarrow{\nr}\cent(\Q(\G))^\times,\]
	then $\Phi'^T_S$ is in the image of the composite map
	\[\mathfrak M'\cap \Q(\G')^\times\to K_1(\Q(\G'))\xrightarrow{\nr}\cent(\Q(\G'))^\times.\]
\end{proposition}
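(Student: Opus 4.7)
The plan is to transport a preimage of $\Phi_S^T$ along the projection $\pi:\Q(\G)\twoheadrightarrow\Q(\G')$ and verify that its image serves as a preimage of $\Phi'^T_S$ under the analogous composite map for $\G'$. Concretely, let $x\in\mathfrak M\cap\Q(\G)^\times$ be any element such that $[x]\in K_1(\Q(\G))$ (the class of the $1\times 1$ matrix $(x)$) satisfies $\nr([x])=\Phi_S^T$; I will show that $\pi(x)$ has the required properties for $\G'$.

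I will verify three things. First, $\pi(x)\in\pi(\mathfrak M)=\mathfrak M'$ by the very definition of $\mathfrak M'$. Second, since $\pi:\Q(\G)\twoheadrightarrow\Q(\G')$ is a ring homomorphism, it sends units to units, so $\pi(x)\in\Q(\G')^\times$; combining, $\pi(x)\in\mathfrak M'\cap\Q(\G')^\times$ and therefore the class $[\pi(x)]\in K_1(\Q(\G'))$ is well-defined, and coincides with $K_1(\pi)([x])$ by functoriality of $K_1$. Third, and this is the crux, the image of $\pi(x)$ under the composite map \eqref{nr-composite} associated with $\mathfrak M'$ equals $\Phi'^T_S$. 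The essential input here is the commutative diagram \eqref{eq:inflation-diagram} established in the proof of \cref{RWL-functoriality}, which together with the identity $\pi(\Phi_S^T)=\Phi'^T_S$ (the second half of \cref{RWL-functoriality}) yields
\[
\nr([\pi(x)]) \;=\; \nr\bigl(K_1(\pi)([x])\bigr) \;=\; \pi\bigl(\nr([x])\bigr) \;=\; \pi(\Phi_S^T) \;=\; \Phi'^T_S.
\]

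There is no substantive obstacle: the argument is a diagram chase, and all of the analytic and $K$-theoretic content has been isolated in \cref{RWL-functoriality}. The only point that needs checking is invertibility of $\pi(x)$ in $\Q(\G')$, which is automatic from $\pi$ being a (surjective) ring homomorphism. Note also that the hypothesis that $\mathfrak M'$ is a $\Lambda(\Gamma_0)$-order in $\Q(\G')$ containing $\Lambda(\G')$, stated just before the proposition, is what makes the target composite map well-defined in the first place.
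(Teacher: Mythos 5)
Your proof is correct and follows essentially the same route as the paper's: the paper draws a two-square commutative diagram linking the two composite maps via $\pi$, $K_1(\pi)$, and $\pi$ on centres (with the right square's commutativity coming from \eqref{eq:inflation-diagram}), then chases $x\mapsto\pi(x)$ through it, invoking $\pi(\Phi_S^T)=\Phi'^T_S$ from \cref{RWL-functoriality}. You have merely unpacked the diagram chase into prose, with the minor extra (and correct) observation that $\pi$ sends units to units.
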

\begin{proof}
	The maps in the statement fit into a commutative diagram:
	\[\begin{tikzcd}
		\mathfrak M\cap\Q(\G)^\times \ar[r] \ar[d,"\pi"] & K_1(\Q(\G)) \ar[d, "K_1(\pi)"] \ar[r,"\nr"] & \cent(\Q(\G))^\times \ar[d,"\pi"] \\
		\mathfrak M'\cap\Q(\G')^\times \ar[r] & K_1(\Q(\G')) \ar[r,"\nr"] & \cent(\Q(\G'))^\times
	\end{tikzcd}\]
	By \cref{RWL-functoriality}, we have $\pi(\Phi_S^T)=\Phi'^T_S$. 
	Hence if $\Phi_S^T$ is the image of some $x\in \mathfrak M\cap \Q(\G)^\times$ under the top row, then $\Phi'^T_S$ is the image of $\pi(x)$ under the bottom row.
\end{proof}

\begin{remark}
	Using the same methods as in the proof of \cref{lem:integrality-equivalence}, the results of \cref{claim:independence-of-T,claim:independence-of-S,changing-top-field} can be reformulated for $\zeta_S^T$ under the assumption of \cref{sEIMCu}.
\end{remark}

\begin{remark} \label{rem:assumptions-on-L}
	In order to formulate the smoothed equivariant Iwasawa main conjecture (\cref{sEIMCu}), we need to assume that $L$ is a CM field containing a primitive $p$th root of unity $\zeta_p$. \Cref{changing-top-field} shows that we may restrict our attention to such extensions when studying the equivariant $p$-adic Artin conjecture (\cref{integrality-Phi}). Indeed, if $F/K$ is an arbitrary finite extension of totally real number fields, then let $F_\infty\colonequals FK_\infty$ be the cyclotomic $\ZZ_p$-extension, and let $\G'\colonequals\Gal(F_\infty/K)$. Then $L\colonequals F(\zeta_p)$ is a CM field with maximal totally real subfield $L^+=F(\zeta_p+\zeta_p^{-1})$, and thus $\G'$ is a quotient of $\G=\Gal(L^+_\infty/K)$.
\end{remark}

\section{Dimension reduction for integral matrices over skew power series rings}\label{sec:dimension-reduction}
Let $ K$ be a local field with valuation ring $\OO_ K$, and fix a uniformiser $\pi_ K$. Let $ K/ k$ be a cyclic Galois $p$-extension of local fields, and let $\tau\in\Gal( K/ k)$ be a generator. Let $ D$ be a finite dimensional skew field with centre $ K$, and let $\OO_ D$ denote the unique maximal $\OO_ K$-order in $ D$. Let $s/ r$ be the Hasse invariant of $ D$. Then $ D$ is obtained by adjoining an element $\pi_ D$ to $ K(\omega)$ where $\omega$ is a primitive root of unity of order $ q^s-1$ with $q$ being the order of the residue field of $ K$, moreover $\pi_ D^s=\pi_ K$ and $\pi_ D\omega\pi_ D^{-1}=\omega^{q^r}$.

Suppose that the index $s$ of $ D$ divides $\#\overline{ k}-1$ where $\#\overline{ k}$ is the order of the residue field of $ k$.
Then by \cite[Proposition~2.7]{W}, $\tau$ admits a unique extension to an automorphism of $ D$ of order $( K: k)$.

\begin{proposition}
The skew power series ring $\Sigmachi \colonequals \OO_{ D}[[X;\tau,\tau-\id]]$ is well defined, and has the following properties:
\begin{propositionlist}
	\item The total ring of quotients $\Dchi\colonequals\Quot(\OO_{ D}[[X;\tau,\tau-\id]])$ is a skew field.
	\item \label{item:Sigmachi-centre} The ring $\Sigmachi$ resp. its skew field of fractions $\Dchi$ have centres 
	\begin{align*}
		\cent(\Sigmachi)&= \OO_{ k}[[(1+X)^{( K: k)}-1]] \equalscolon \OO_{\cent(\Dchi)}, \\
		\cent(\Dchi)&=\Frac(\OO_{ k}[[(1+X)^{( K: k)}-1]]),
	\end{align*}
	where $\OO_ k$ denotes the ring of integers of $ k$.
	\item \label{Sigmachi-maximal} The ring $\Sigmachi$ is the unique maximal $\OO_{\cent(\Dchi)}$-order in $\Dchi$.
	\item \label{item:Phi} There is an embedding 
	\begin{align*}\Phi:\Dchi&\to M_{( K: k) s}\left(\Frac\left(\OO_{ K(\omega)}[[(1+X)^{( K: k)}-1]]\right)\right) \\
	\intertext{of $\Frac(\OO_ K[[(1+X)^{( K: k)}-1]])$-algebras, sending an element $\alpha\in  D$ resp. $1+X$ to}
	\alpha&\mapsto \begin{psmallmatrix}
		\phi(\alpha)\\
		&\tau(\phi(\alpha))\\
		&&\ddots\\
		&&&\tau^{( K: k)-1}(\phi(\alpha))
	\end{psmallmatrix} \\
	1+X &\mapsto \begin{psmallmatrix}
		& \mathbf{1}_{s} \\
		&& \ddots \\
		&&& \mathbf{1}_{s} \\
		(1+X)^{( K: k)}\mathbf{1}_{s}
	\end{psmallmatrix}
	\end{align*}
	where $\phi$ is the map from \eqref{eq:phi-def}, and $\mathbf{1}_{s}$ denotes the $s\times s$ identity matrix.
\end{propositionlist}
\end{proposition}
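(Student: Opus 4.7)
I would prove the parts in the order (i), (iv), (ii), (iii), using (iv) to provide a matrix model that makes (ii) and (iii) transparent. For (i), the derivation property of $\tau - \id$ is the one-line check $(\tau-\id)(rs) = (\tau(r)-r)s + \tau(r)(\tau(s)-s)$. For $\sigma$-nilpotency, the paper's remark reduces the problem to topological nilpotency of $\tau-\id$ on $\OO_{\sD}$ since $\tau$ and $\tau-\id$ commute. The key input is that $\tau$ has $p$-power order $(\sK:\sk) = p^a$ while the residue characteristic of $\OO_\sD$ is $p$: expanding $(\tau-\id)^{p^a} = \sum_j (-1)^{p^a-j}\binom{p^a}{j}\tau^j$ and using that $\binom{p^a}{j}$ is divisible by $p$ for $0<j<p^a$, together with $\tau^{p^a} = \id$, gives $(\tau-\id)^{p^a}(\OO_\sD) \subseteq \pi_\sD\OO_\sD$. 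The same type of expansion applied to $\pi_\sD^k \OO_\sD$, using $\tau(\pi_\sD) = u\pi_\sD$ for a unit $u$ with $u\tau(u)\cdots\tau^{p^a-1}(u) = 1$ (which follows from $\tau^{p^a}(\pi_\sD) = \pi_\sD$), yields $(\tau-\id)^{p^a m}(\OO_\sD) \subseteq \pi_\sD^m\OO_\sD$. Hence $\Sigmachi$ is a well-defined noetherian local complete ring with maximal ideal containing $\pi_\sD$ and $X$; combining Venjakob's Weierstrass preparation (\cref{WPT}) with Goldie's theorem (applied to the prime noetherian domain $\Sigmachi$) gives that $\Dchi = \Quot(\Sigmachi)$ is a skew field.

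For (iv), rearranging the defining relation yields the cleaner identity $(1+X)a = \tau(a)(1+X)$ for $a \in \OO_\sD$, so conjugation by $1+X$ realises the extension of $\tau$. To verify $\Phi$ is a ring homomorphism it suffices to check this single relation for the images. This is a direct block computation: the block-companion $\Phi(1+X)$, multiplied with the block-diagonal $\Phi(a)$, cyclically shifts the diagonal blocks with an extra $\tau$ applied, exactly matching $\Phi(\tau(a))\Phi(1+X)$; the wrap-around block works precisely because $\tau^{(\sK:\sk)} = \id$ and $(1+X)^{(\sK:\sk)}$ is central in the target matrix ring. Injectivity of $\Phi$ on $\OO_\sD$ comes from injectivity of the splitting map $\phi$ from \eqref{eq:phi-def}, and full injectivity follows from the linear independence of images of distinct powers of $1+X$ under the cyclic block-shift. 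Extension to $\Dchi$ is automatic once nonzero central elements map to invertible matrices.

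For (ii), one inclusion is immediate: $(1+X)^{(\sK:\sk)}$ commutes with $\OO_\sD$ (by $\tau^{(\sK:\sk)} = \id$) and with $1+X$, so lies in $\cent(\Sigmachi)$, hence $\OO_\sk[[(1+X)^{(\sK:\sk)}-1]] \subseteq \cent(\Sigmachi)$. For the reverse, I would work inside the matrix model of (iv): the centraliser of $\Phi(\sD)$ consists of scalar matrices over $\Frac(\OO_{\sK(\omega)}[[(1+X)^{(\sK:\sk)}-1]])$, and the further requirement of commuting with the block-companion $\Phi(1+X)$ forces $\Gal(\sK(\omega)/\sk)$-invariance of the entries, pinning down $\cent(\Dchi) = \Frac(\OO_\sk[[(1+X)^{(\sK:\sk)}-1]])$; intersecting with $\Sigmachi$ gives the integral statement. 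For (iii), $\Sigmachi$ is a local noetherian complete $\OO_{\cent(\Dchi)}$-order whose rank $((\sK:\sk)\s)^2$ matches $[\Dchi:\cent(\Dchi)]$ (read off from $\Phi$), and maximality then follows by identifying its residue algebra modulo $(\pi_\sk, Y)$ as a matrix ring over the correct residue skew field.

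The main obstacle I expect is (iii). Since $\OO_{\cent(\Dchi)} = \OO_\sk[[Y]]$ with $Y = (1+X)^{(\sK:\sk)}-1$ is a $2$-dimensional regular local ring rather than a complete discrete valuation ring, the standard maximal-order criterion in \cite[\S18]{MO} does not apply directly. I would either invoke a $2$-dimensional analogue or reduce to the $1$-dimensional case by passing modulo $\pi_\sk$ and checking maximality fibrewise. Uniqueness of the maximal order is subtle for the same reason and would require a separate argument, for instance by showing that any two maximal orders agree after localisation at every height-one prime of $\OO_{\cent(\Dchi)}$. I expect both issues to be handled by invoking the author's prior work \cite{W}, which is already cited for the existence and order of the extension of $\tau$ to $\sD$.
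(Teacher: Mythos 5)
The paper does not contain a proof of this proposition: it is dispatched entirely with the sentence ``We refer to \cite[\S3]{W} for the proof.'' So there is no in-paper argument to compare yours against; the relevant comparison is with the author's cited work \cite{W}.

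That said, your sketch is a sensible reconstruction, and you correctly identify (iii) as the genuine obstruction: $\OO_{\cent(\Dchi)}=\OO_\sk[[Y]]$ is two-dimensional, so the one-dimensional maximal-order theory of \cite[\S\S17--18]{MO} does not apply off the shelf, and both maximality and uniqueness need a separate argument --- which is exactly what \cite[\S3]{W} supplies. Your ordering (i), (iv), (ii), (iii), using the explicit block model $\Phi$ to read off the centre and the rank, is a reasonable bottom-up route, and the verification of the relation $(1+X)a=\tau(a)(1+X)$ under $\Phi$ by direct block multiplication is correct (using $\tau^{(\sK:\sk)}=\id$ and centrality of $(1+X)^{(\sK:\sk)}$ for the wrap-around block).

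One substantive gap in (i): you invoke Goldie's theorem ``applied to the prime noetherian domain $\Sigmachi$,'' but you never establish that $\Sigmachi$ is a domain. Goldie for a prime noetherian ring only gives $\Quot(\Sigmachi)\simeq M_n(D)$ for some $n\ge 1$; concluding $n=1$ requires precisely the absence of zero divisors. This is not immediate for a skew power series ring with a nontrivial derivation, since a naive lowest-degree argument is spoiled by the $\delta$-terms; the standard fix is to pass to the graded ring associated with the $(\pi_\sD, X)$-adic filtration, identify it with a (commutative or twisted) polynomial ring over the residue skew field, and deduce the domain property from there. Your topological-nilpotency computation for $(\tau-\id)^{p^a}$ is otherwise fine (using that $p$ is odd so the $j=0$ and $j=p^a$ terms cancel, and that $p\in\pi_\sD\OO_\sD$), though the cocycle relation on the unit $u$ you quote is a cyclic permutation of the one that actually falls out of $\tau^{p^a}(\pi_\sD)=\pi_\sD$ and is not needed for the nilpotency estimate.

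Given the gap in (i) and the acknowledged deferral in (iii), this is a reasonable plan of attack but not a complete proof --- which is fair, since the paper itself does not supply one.
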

We refer to \cite[\S3]{W} for the proof {of these statements}.

In this section, we study matrices over the skew field $\Dchi$. The main result is \cref{2.13}, which allows one to do dimension reduction: by this we mean that an invertible matrix with entries in a maximal order can be replaced by a smaller matrix with the same Dieudonné determinant. This is a generalisation of a result of Nichifor and Palvannan, see \cite[Proposition~2.13]{NichiforPalvannan}, and our proof indeed follows the outline of theirs.

\begin{proposition}[Dimension reduction] \label{2.13}
	Let $m\ge1$, $n\ge1$, and
	\[A \in M_m\left( M_{n} \left( \Sigmachi \right) \right) \cap \GL_m \left( M_{n} \left( \Dchi \right) \right).\]
	Then there exists some $C\in M_n\left(\Sigmachi\right)\cap\GL_n\left(\Dchi\right)$ such that $\det A=\det C$ where $\det$ is the Dieudonné determinant.
\end{proposition}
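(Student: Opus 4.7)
I will argue by induction on $m$, following the strategy of Nichifor and Palvannan \cite[Proposition~2.13]{NichiforPalvannan} with their classical Weierstraß theory replaced by Venjakob's skew version (\cref{WDT,WPT}). The base case $m = 1$ is trivial: set $C \colonequals A$.

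For the inductive step with $m \ge 2$, view $A$ as an element of $M_{mn}(\Sigmachi) \cap \GL_{mn}(\Dchi)$. The goal is to produce $U, V \in \GL_{mn}(\Sigmachi)$, each a product of elementary and permutation matrices, such that
\[
UAV \;=\; \begin{pmatrix} B & * \\ 0 & A'' \end{pmatrix},
\qquad B \in M_n(\Sigmachi) \cap \GL_n(\Dchi),\ A'' \in M_{m-1}(M_n(\Sigmachi)) \cap \GL_{m-1}(M_n(\Dchi)).
\]
Since elementary matrices have trivial Dieudonné determinant and the sign contributions from permutation matrices can be absorbed into a single entry of $B$, the block-triangular factorisation gives $\det A = \det B \cdot \det A''$ in $(\Dchi^\times)^\ab$. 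The induction hypothesis applied to $A''$ then yields $C'' \in M_n(\Sigmachi) \cap \GL_n(\Dchi)$ with $\det A'' = \det C''$, and $C \colonequals B C'' \in M_n(\Sigmachi) \cap \GL_n(\Dchi)$ satisfies $\det C = \det A$, closing the induction.

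The principal obstacle is constructing $U$ and $V$, which I expect to consume the bulk of the work. The idea is to clear the bottom-left $(m-1)n \times n$ block column by column. For the first column, invertibility of $A$ over $\Dchi$ combined with completeness of the local ring $\Sigmachi$ forces at least one entry to have finite reduced order and hence to admit the Weierstraß factorisation of \cref{WPT}; otherwise every entry would lie in the maximal ideal and $A$ would be singular modulo the residue field. Selecting an entry of minimal reduced order as a pivot, permuting it into position $(1,1)$, and iteratively applying Weierstraß division (\cref{WDT}) to replace each entry below it by its remainder modulo the pivot, one drives all of column~$1$ below row~$1$ to zero in finitely many steps, as the reduced order of the running minimum in the column decreases strictly at each step while remaining non-negative. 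Columns $2, \ldots, n$ are then cleared similarly, pivoting each time in an entry strictly below the previously cleared rows; this preserves the earlier zeros since the new row operations use rows in which the earlier columns already vanish.

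The main new feature compared with the commutative setting of \cite{NichiforPalvannan} is that the skew commutation relation $Xr = \tau(r)X + (\tau - \id)(r)$ must be tracked carefully throughout the elimination, and one must invoke left- or right-handed Weierstraß division as appropriate depending on whether operations act on rows or columns. Both versions are available in $\Sigmachi$ because $\tau$ is a bijective automorphism, so $\Sigmachi$ is also a right skew power series ring over $\OO_\sD$; the full reduction algorithm thus goes through with only bookkeeping adjustments.
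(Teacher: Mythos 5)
Your approach diverges from both the paper's proof and the Nichifor--Palvannan argument you cite, and it contains a genuine gap in the pivoting step. You claim that invertibility of $A$ over $\Dchi$ forces some entry in each column to have finite reduced order, ``otherwise every entry would lie in the maximal ideal and $A$ would be singular modulo the residue field.'' This inference is false: $A$ \emph{can} be singular modulo $\rad\Sigmachi$ and still be invertible over $\Dchi$. The simplest counterexample is $A = \pi_\sD\,\mathbf 1_{mn}$: every entry has infinite reduced order (hence no Weierstra{\ss} factorisation exists for any entry), yet $A \in \GL_{mn}(\Dchi)$ because $\pi_\sD$ is a unit in the skew field $\Dchi$. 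Your elimination therefore stalls immediately on such inputs --- there is no pivot to which \cref{WPT} applies, so the row/column reduction never starts.

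This is not a bookkeeping issue; it is the central difficulty the proposition must overcome. The paper's proof (and Nichifor--Palvannan's) resolves it by passing to the localisation $\Sigmachi[\pi_\sD^{-1}]$, which \cref{Sigmachi-ncPID} shows to be a noncommutative PID, and then invoking Jacobson's diagonal reduction (\cref{thm:Jacobson-reduction}) to obtain $A = UBV$ with $B$ diagonal. Over $\Sigmachi[\pi_\sD^{-1}]$, Weierstra{\ss} preparation applies after clearing denominators, producing $\det A = \beta J_B \pi_\sD^{-(u+b+v)}$ with $\beta$ a unit and $J_B$ monic. The crucial remaining step --- entirely absent from your proposal --- is to show that the $\pi_\sD$-exponent $u+b+v$ is $\le 0$, so that the result lies in $\Sigmachi$ rather than merely in $\Sigmachi[\pi_\sD^{-1}]$. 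That bound is extracted from the integrality of the reduced norm $\nr_{M_{mn}(\Dchi)/\cent(\Dchi)}(A) \in \OO_{\cent(\Dchi)}$ (\cref{nrA-integral}, via \cref{MO10.1}), combined with the fact that reduced norms of monic skew polynomials are monic (\cref{nrd-of-monic-is-monic}). Without invoking integrality of the reduced norm somewhere, no argument of this kind can control the uniformiser, and your induction as stated cannot close.
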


\begin{remark}
	An equivalent formulation of \cref{2.13} would be to state it for $n=1$ instead of for all $n\ge1$. In fact, the proof goes by showing it for $n=1$, and then {constructing} an $n\times n$ matrix by taking this as the $(1,1)$-entry, filling up the rest of the diagonal with ones, and setting all off-diagonal entries to be zero. The \namecref{2.13} is stated this way because this is the form in which it will be used in the proof of \cref{thm:integrality}.
\end{remark}

\begin{proof}[Proof of \cref{2.13}]
	We begin with the following observation:
	\begin{lemma}\label{nrA-integral}
		The reduced norm of $A$ is integral: 
		\[\nr_{M_{mn}(\Dchi)/\cent(\Dchi)} A\in \OO_{\cent(\Dchi)}.\]
	\end{lemma}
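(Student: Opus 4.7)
The plan is to deduce the claim directly from the general integrality result Proposition~\ref{MO10.1} applied to a suitable maximal order containing $A$. By flattening the block structure, we may view $A$ as an element of $M_{mn}(\Sigmachi)$ which becomes invertible in $M_{mn}(\Dchi)$. So the statement reduces to the assertion that any element of $M_{mn}(\Sigmachi)$ has a reduced norm lying in $\OO_{\cent(\Dchi)}$.

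First I would check that $M_{mn}(\Sigmachi)$ is a maximal $\OO_{\cent(\Dchi)}$-order in the central simple $\cent(\Dchi)$-algebra $M_{mn}(\Dchi)$. This follows by combining two inputs: part~\ref{Sigmachi-maximal} of the proposition recalled at the start of this section, which states that $\Sigmachi$ itself is the unique maximal $\OO_{\cent(\Dchi)}$-order in $\Dchi$, and \cref{MO8.7}, which guarantees that maximality is preserved under passage to matrix rings over a noetherian integrally closed base. The applicability of \cref{MO8.7} rests on the fact that $\OO_{\cent(\Dchi)}=\OO_\sk[[(1+X)^{(\sK:\sk)}-1]]$ is a regular (hence integrally closed) noetherian local integral domain with field of fractions $\cent(\Dchi)$, as noted in part~\ref{item:Sigmachi-centre}.

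Having established that $M_{mn}(\Sigmachi)$ is a maximal $\OO_{\cent(\Dchi)}$-order in the central simple algebra $M_{mn}(\Dchi)$, \cref{MO10.1} directly yields that the reduced characteristic polynomial $\rcp_{M_{mn}(\Dchi)/\cent(\Dchi)}(A)$ has coefficients in $\OO_{\cent(\Dchi)}$. Since the reduced norm is, up to sign, the constant term of the reduced characteristic polynomial by the definition recalled in \cref{sec:Nrd-det}, we conclude that $\nr_{M_{mn}(\Dchi)/\cent(\Dchi)}(A)\in \OO_{\cent(\Dchi)}$.

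There is no real obstacle here; the entire argument is a direct combination of the maximal-order and integrality statements already recalled in \cref{sec:prelims}. The only point requiring any care is verifying the hypotheses of \cref{MO8.7,MO10.1}, namely that $\OO_{\cent(\Dchi)}$ is noetherian and integrally closed, which is immediate from its explicit description as an iterated power series ring over the discrete valuation ring $\OO_\sk$.
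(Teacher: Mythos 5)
Your proof is correct and is essentially the same argument as the paper's: both reduce to \cref{MO10.1} after observing, via \cref{Sigmachi-maximal} and \cref{MO8.7}, that $M_{mn}(\Sigmachi)$ is a maximal $\OO_{\cent(\Dchi)}$-order in $M_{mn}(\Dchi)$. You additionally spell out that $\OO_{\cent(\Dchi)}$ is a noetherian integrally closed domain, a hypothesis of \cref{MO8.7,MO10.1} that the paper leaves implicit.
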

	\begin{proof}[Proof of \cref{nrA-integral}]
		Actually, even more is true: the reduced characteristic polynomial of $A$ has coefficients in $\OO_{\cent(\Dchi)}$. This follows directly by applying \cref{MO10.1} with $R \colonequals \OO_{\cent(\Dchi)}$, $ K = \cent(\Dchi)$, $A \colonequals M_{mn}(\Dchi)$, and $\Delta \colonequals M_{mn}(\Sigmachi)$.
		These satisfy the conditions of \cref{MO10.1}: indeed, \cref{Sigmachi-maximal} states that $\Sigmachi$ is a maximal $\OO_{\cent(\Dchi)}$-order in $\Dchi$, and consequently $M_{mn}(\Sigmachi)$ is a maximal $\OO_{\cent(\Dchi)}$-order in $M_{mn}(\Dchi)$ by \cref{MO8.7}.
	\end{proof}
	
	Recall that we call a ring a PID if all left ideals and all right ideals are principal.
	\begin{lemma} \label{Sigmachi-ncPID}
		$\Sigmachi\left[\pi_{ D}^{-1}\right]$ is a (noncommutative) PID.
	\end{lemma}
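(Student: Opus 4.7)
The plan is to derive the PID property for $\Sigmachi[\pi_\sD^{-1}]$ from the Weierstrass preparation and division theorems (\cref{WPT,WDT}), which equip $\Sigmachi$ with a structure theory modelled on that of $\ZZ_p[[X]]$.

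First I would verify that $\pi_\sD$ is a normal element of $\Sigmachi$, i.e.\ $\pi_\sD\Sigmachi=\Sigmachi\pi_\sD$. Normality of $\pi_\sD$ in $\OO_\sD$ is classical (it generates the unique maximal two-sided ideal of the noncommutative complete DVR $\OO_\sD$); moreover $\tau$ preserves the valuation on $\OO_\sD$, forcing $\tau(\pi_\sD)=u\pi_\sD$ for some $u\in\OO_\sD^\times$, and the derivation $\tau-\id$ sends $\pi_\sD$ into $\pi_\sD\OO_\sD$. Together these let one rewrite $X\pi_\sD$ as $(\text{unit})\pi_\sD X+\pi_\sD(\ldots)$, so conjugation by $\pi_\sD$ extends to an automorphism of $\Sigmachi$. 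Consequently $\{\pi_\sD^n\}_{n\ge0}$ is an Ore set on both sides, and the localisation $\Sigmachi[\pi_\sD^{-1}]$ is a noetherian domain, inheriting noetherianness from $\Sigmachi$.

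Next I would install a canonical factorisation. For nonzero $f\in\Sigmachi$, let $k\ge0$ be the largest integer with $f\in\pi_\sD^k\Sigmachi$, and write $f=\pi_\sD^k g$. By choice of $k$ some coefficient of $g$ lies in $\OO_\sD^\times$, so $\ordred(g)<\infty$, and \cref{WPT} produces a unique factorisation $g=\epsilon F$ with $\epsilon\in\Sigmachi^\times$ and $F$ a distinguished skew polynomial. Since $\pi_\sD$ becomes a unit in $\Sigmachi[\pi_\sD^{-1}]$, every nonzero element of the localisation is associate (on either side) to a distinguished polynomial; moreover the units of $\Sigmachi[\pi_\sD^{-1}]$ correspond exactly to the distinguished polynomials of degree zero.

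Third, for a nonzero left ideal $I\subseteq\Sigmachi[\pi_\sD^{-1}]$, I would pick $f_0\in I$ whose associated distinguished polynomial $F_0$ has minimal $X$-degree $n$, well defined by the previous step. Given any $g\in I$, clear denominators as $g=\pi_\sD^{-j}h$ with $h\in\Sigmachi$, and apply Weierstrass division \cref{WDT} by $F_0$: $h=qF_0+r$ with $r$ of $X$-degree $<n$. Then $\pi_\sD^{-j}r=g-\pi_\sD^{-j}qF_0\in I$, and if $r\ne0$, the canonical factorisation of $\pi_\sD^{-j}r$ would exhibit a distinguished polynomial in $I$ of degree $<n$, contradicting minimality. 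Hence $I=\Sigmachi[\pi_\sD^{-1}]\cdot F_0$ is principal. The right-ideal case is handled identically using the right-module analogue of Weierstrass division.

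The main delicate point is step one: carefully checking normality of $\pi_\sD$ in the presence of the nontrivial derivation $\tau-\id$, and tracking how the skew multiplication interacts with the $\pi_\sD$-adic filtration $(\pi_\sD^k\Sigmachi)_k$ so that the canonical factorisation of step two really factors each element as $\pi_\sD^k\epsilon F$ on either side. Once this bookkeeping is pinned down, the Weierstrass theory takes over and steps two and three are essentially formal.
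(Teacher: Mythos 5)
Your proof is correct and follows essentially the same Weierstraß-theoretic strategy as the paper: produce a distinguished polynomial of minimal degree lying in the ideal via \cref{WPT}, then use \cref{WDT} to show every element is a left multiple of it, with the key contradiction being that a nonzero remainder, after clearing $\pi_\sD$-powers, would yield a distinguished polynomial in the ideal of strictly smaller degree. The extra care you take in your first step---verifying that $\pi_\sD$ is normal in $\Sigmachi$ so that the localisation $\Sigmachi[\pi_\sD^{-1}]$ is well defined---is a point the paper's proof takes for granted, and your ``clear denominators, divide over $\Sigmachi$'' phrasing of \cref{WDT} is an equivalent reformulation of the paper's ``tensor the Weierstraß decomposition with $\OO_\sD[\pi_\sD^{-1}]$.''
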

	\begin{proof}[Proof of \cref{Sigmachi-ncPID}]
		We will show that all left ideals of $\Sigmachi[\pi_{ D}^{-1}]$ are principal; the right version can be shown in the same way, as every statement we will use has a right counterpart. The proof uses Weierstraß theory of skew power series rings, for which we refer back to \cref{sec:skew-power-series-rings}.
		
		Let $I\subseteq \Sigmachi[\pi_{ D}^{-1}]$ be a nontrivial left ideal. Then $I\cap \Sigmachi\ne(0)$: indeed, if $0\ne x\in I$, then $\pi_{ D}^a x\in I\cap\Sigmachi$ for $a\ge0$ large enough. Therefore not all reduced orders are $\infty$; let $f\in I\cap \Sigmachi$ be an element of minimal reduced order. Choosing $f$ so that it is in $\Sigmachi$ is necessary in order to be able to apply Weierstraß preparation (\cref{WPT}). This allows us to write $f$ uniquely as \[f=\epsilon F,\] where $\epsilon\in\Sigmachi^\times$ is a unit and $F\in\Sigmachi$ is a distinguished skew polynomial. It follows that $F\in I$, and $\ordred(F)=\ordred(f)$. No such theorem is available in $\Sigmachi[\pi_{ D}^{-1}]$.
		
		Clearly $\Sigmachi[\pi_{ D}^{-1}]F\subseteq I$. We will show that the converse inclusion also holds, whence $I$ is the principal left ideal generated by $F$. For this, let $g\in I$ be arbitrary. Weierstraß division (\cref{WDT}) carries over from $\Sigmachi$ to $\Sigmachi[\pi_{ D}^{-1}]$ as follows. The Weierstraß division theorem for $\Sigmachi$ states that there is a decomposition
		\begin{align*}
			\Sigmachi &= \Sigmachi F\oplus\bigoplus_{i=0}^{\ordred(F)-1} \OO_{ D}X^i. & \\
			\intertext{Inverting $\pi_{ D}$, or in other words, tensoring with $\OO_{ D}[\pi_{ D}^{-1}]$ over $\OO_{ D}$ then yields}
			\Sigmachi\left[\pi_{ D}^{-1}\right] &= \Sigmachi\left[\pi_{ D}^{-1}\right] F\oplus\bigoplus_{i=0}^{\ordred(F)-1} \OO_{ D}\left[\pi_{ D}^{-1}\right]X^i .
		\end{align*}
        With respect to this decomposition, $g$ can be written as $g=hF+r$.
		Since $g$ and $F$ are in $I$, we have $r\in I$, but {a priori, $r$ is not necessarily in $\Sigmachi$.}
		
		We claim that $r=0$. $\uplightning$ If $r\ne0$, then since $r$ has only finitely many coefficients, there is a unique $a\in\ZZ$ such that $\pi_{ D}^a r \in \OO_{ D}[X]-\pi_{ D}\OO_{ D}[X]$. We have the following inequalities:
		\[\ordred\left(\pi_{ D}^a r\right)\le \deg\left(\pi_{ D}^a r\right)=\deg(r)<\ordred(F).\]
		But since $\pi_{ D}^a r\in I$, this contradicts the minimality condition in our choice of $f$. $\lightning$
		
		Now we have $r=0$, so $g=hF$. That is, $g\in\Sigmachi[\pi_{ D}^{-1}] F$, as claimed.
	\end{proof}
	
	\cref{Sigmachi-ncPID} allows us to utilise \cref{thm:Jacobson-reduction}, which tells us that $A$ admits a diagonal reduction via elementary operations. That is, $A=UBV$ where $B\in M_{mn}(\Sigmachi[\pi_{ D}^{-1}])$ is a diagonal matrix, and $U,V\in \GL_{mn}(\Sigmachi[\pi_{ D}^{-1}])$ are products of elementary, permutation, and scalar matrices.
	
	We want to use Weierstraß preparation again, {this time on the Dieudonné determinants of these matrices}: for this, we need to clear denominators. Let $b\in\ZZ$ be the minimal integer such that $\pi_{ D}^b B\in M_{mn}(\Sigmachi)$. Then 
	\[\det(B)=\det(\pi_{ D}^{-b}\cdot \pi_{ D}^b B)=\pi_{ D}^{-b{mn}}\det(\pi_{ D}^bB)=\pi_{ D}^{-b{mn}}\beta_B J_B,\] 
	where $\beta_B\in\Sigmachi^\times$ is a unit and $J_B\in\Sigmachi$ is a monic polynomial.
	Similarly, let $u,v\in\ZZ$ be the minimal integers such that $\pi_{ D}^u U, \pi_{ D}^v V \in M_{mn}(\Sigmachi)$. Then there exist units $\beta_U,\beta_V\in\Sigmachi^\times$ and monic polynomials $J_U,J_V\in\Sigmachi$ such that 
	\begin{equation}\label{eq:UV-WPT}
		\det(U)=\pi_{ D}^{-u{mn}}\beta_U J_U, \quad \det(V)=\pi_{ D}^{-v{mn}}\beta_V J_V.
	\end{equation} 
	We claim that $J_U=J_V=1$. For this, we go back to the fact that $U$ and $V$ are products of elementary, permutation and scalar matrices. Elementary matrices have Dieudonné determinant $1$ by definition. A permutation matrix $M$ satisfies $M^2=1$ and hence $\det(M)^2=1$; in particular, $\det(M)$ is invertible in $\Sigmachi[\pi_{ D}^{-1}]$. So the elementary and permutation factors only contribute  $\beta$-factors in \eqref{eq:UV-WPT}. Moreover, since $U$ and $V$ are invertible over $\Sigmachi[\pi_{ D}^{-1}]$, their scalar matrix parts may only contribute units in $\Sigmachi[\pi_{ D}^{-1}]$, which can be written as a power of $\pi_{ D}$ times a unit in $\Sigmachi$: that is, these are accounted for in the $\pi_{ D}$- and $\beta$-factors in \eqref{eq:UV-WPT}. We conclude that $J_U=J_V=1$, as claimed.
	
	Since the Dieudonné determinant factors through the abelian group $K_1(\Dchi)$, we have
	\begin{align} \label{eq:det-A}
		\det(A)&= \det(U)\det(B)\det(V) = \left(\beta_U\beta_B\beta_V\right)\cdot J_B\cdot \pi_{ D}^{-{wmn}}=\det(C),
	\end{align}
	where {$w\colonequals u+b+v$ and} $C=\diag(\left(\beta_U\beta_B\beta_V\right)\cdot J_B\cdot \pi_{ D}^{-{wmn}},1,\ldots,1)$ is an $n\times n$ matrix. In order to prove $C\in M_{n}(\Sigmachi)\cap \GL_{n}(\Dchi)$, it remains to show that ${w}\le0$.
	
	In $\cent(\Dchi)$, we have the following equality:
	\begin{align}
		\underbrace{\nr_{M_{mn}(\Dchi)/\cent(\Dchi)}(A)}_{\in \OO_{\cent(\Dchi)}} &= \nr_{\Dchi/\cent(\Dchi)}(\det(A)) \label{eq:nr-det-A-1} \\
		&= \underbrace{\nr_{\Dchi/\cent(\Dchi)}(\beta_U\beta_B\beta_V)}_{\in \OO_{\cent(\Dchi)}^\times} \cdot \underbrace{\nr_{\Dchi/\cent(\Dchi)}(J_B)}_{\mathrm{monic}} \cdot \nr_{\Dchi/\cent(\Dchi)}\left(\pi_{ D}^{-{wmn}}\right). \label{eq:nr-det-A-2}
	\end{align}
	The first equality comes from the relationship between reduced norms and the Dieudonné determinant: for invertible matrices, $\nr_{M_{mn}(\Dchi)/\cent(\Dchi)}=\nr_{\Dchi/\cent(\Dchi)}\circ\det$, see \eqref{eq:nr-det}. Here we use the assumption that $A$ is invertible. The second equality is \eqref{eq:det-A}.
	
	On the left hand side of \eqref{eq:nr-det-A-1}, $\nr_{M_{mn}(\Dchi)/\cent(\Dchi)}(A)$ is integral, i.e. contained in $\OO_{\cent(\Dchi)}$ by \cref{nrA-integral}. On the right hand side of \eqref{eq:nr-det-A-2}, the first term is a unit because $\beta_U\beta_B\beta_V\in\Sigmachi^\times$.
	The second term is a monic polynomial by the upcoming \cref{nrd-of-monic-is-monic} because $J_B$ is monic. 
	
	Let $\pi_ k$ be a uniformiser of $\OO_ k$. Then neither the first nor the second term is divisible by $\pi_ k$, so in order for the right hand side to be integral, the third term must also be integral. The $\pi_ k$-adic valuation of $\nr_{\Dchi/\cent(\Dchi)}(\pi_{ D}^{-{wmn}})$ is a multiple of $-{wmn}$. We conclude that ${w}\le0$. This completes the proof.
\end{proof}

\begin{lemma} \label{nrd-of-monic-is-monic}
	Let $G(X)\in \Dchi=\Quot(\OO_{ D}[[X;\tau,\tau-\id]])$ be an element which is a monic polynomial in $X$.
	Then its reduced norm $\nr_{\Dchi / \cent(\Dchi)} (G)$
	in $\cent(\Dchi)=\Frac(\OO_{ k}[[(1+X)^{( K: k)}-1]])$ is also a monic polynomial in $(1+X)^{( K: k)}$. In particular, it is also a monic polynomial in $X$.
\end{lemma}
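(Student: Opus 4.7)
The plan is to use the embedding $\Phi\colon \Dchi \hookrightarrow M_{d\s}(E)$ from \cref{item:Phi}, where $d := (\sK:\sk)$ and $E := \Frac(\OO_{\sK(\omega)}[[Y]])$ with $Y := (1+X)^d - 1$. Because $E$ is a splitting field of $\Dchi$ of degree $d\s$ over $\cent(\Dchi)$, the reduced norm is computed as the $E$-determinant: $\nr_{\Dchi/\cent(\Dchi)}(g) = \det_E \Phi(g)$ for every $g \in \Dchi$. Setting $Z := (1+X)^d = 1 + Y$, one must show that $\det_E \Phi(G)$ is a monic polynomial in $Z$ of degree $N\s$.

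First I would dispatch the monomial case. Let $P := \Phi(1+X)$, so $\Phi(X) = P - I$. The block-shift form of $P$ gives $P^d = Z \cdot I_{d\s}$, and under the obvious row/column permutation interchanging the two index levels, $P$ is similar to $\s$ copies of the $d \times d$ companion matrix of $t^d - Z$. Hence the characteristic polynomial of $P$ equals $(t^d - Z)^\s$, and
\[
\det(P - I) \;=\; (-1)^{d\s}(1 - Z)^\s \;=\; (-1)^{(d+1)\s}(Z - 1)^\s \;=\; (Z - 1)^\s \;=\; Y^\s,
\]
using that $d$ is odd (being a power of the odd prime $p$) so that $(d+1)\s$ is even. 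Multiplicativity of the reduced norm then gives $\nr(X^N) = Y^{N\s}$, monic in $Z$ of degree $N\s$.

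For general $G = X^N + \sum_{i < N} a_i X^i$ with $a_i \in \OO_\sD$, the plan is to identify the leading $Z$-term of $\det\Phi(G)$ via a block analysis. Writing $\Phi(G) = \sum_{k \le N} \Phi(b_k) P^k$, where the $b_k \in \OO_\sD$ arise from re-expanding $G$ in terms of $T := 1 + X$ (so $b_N = 1$), and using the cyclic-algebra identities $P^k = Z^{\lfloor k/d \rfloor} P^{k \bmod d}$ together with $P \cdot \Phi(a) = \Phi(\tau(a)) \cdot P$, each $\s \times \s$ block $\Phi(G)_{IJ}$ becomes a polynomial in $Z$ of a specific degree $M_{IJ}$ depending only on $r_{IJ} := (J - I) \bmod d$ and on whether $I + r_{IJ} \ge d$. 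Crucially, at positions with $r_{IJ} = r_N := N \bmod d$, the leading $Z$-coefficient of the block is the scalar $\mathbf{1}_\s$, contributed by the $k = N$ term with $b_N = 1$; at other positions the leading coefficient may be a non-scalar matrix involving the $a_i$'s, but with a strictly smaller total-degree regime in the sense made precise below.

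A Birkhoff--von~Neumann-type transportation argument then shows that the maximum total $Z$-degree over permutations in the Leibniz expansion of $\det\Phi(G)$ equals $N\s$, attained essentially uniquely by the block-shift $\sigma_B \colon I \mapsto (I + r_N) \bmod d$. The key combinatorial identity is $\sum_I r_{I, \sigma_B(I)} = d\cdot\#\{\text{wraps of } \sigma_B\}$, which forces $\sum_I M_{I, \sigma_B(I)} \le N$ with equality iff all $r_{I, \sigma_B(I)} = r_N$. For this optimal block-shift all participating blocks have scalar leading $Z$-coefficient $\mathbf{1}_\s$; the within-block freedom is absorbed into $\det(\mathbf{1}_\s) = 1$, and the overall sign $\operatorname{sgn}(\sigma_B)^\s$ equals $+1$ (again because $d$ is odd). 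Hence $\det\Phi(G) = Z^{N\s} + (\text{lower order in } Z)$, proving monicness in $Z$; monicness in $X$ follows immediately since $(1+X)^d$ is monic of degree $d$ in $X$. The main technical obstacle to anticipate is the transportation-uniqueness argument, which becomes subtler for $\s > 1$ but reduces, via the per-column capacity constraints $\sum_I x_{IJ} = \s$, to the $\s = 1$ case where uniqueness follows from the identity above.
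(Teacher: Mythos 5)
Your proposal is correct and follows the same basic route as the paper's proof: both use the embedding $\Phi$ from the cited proposition, identify $\nr_{\Dchi/\cent(\Dchi)}(G)$ with $\det\Phi(G)$, rewrite $G$ as a polynomial in $1+X$ with top coefficient $1$, and show that the resulting determinant is monic in $Z \colonequals (1+X)^{(\sK:\sk)}$ of degree $\s\cdot\deg G$. Where you differ is in the treatment of the key combinatorial step. After displaying the block form of $\Phi(G)$, the paper simply asserts that Laplace expansion produces a unique highest-degree term in $Z$, namely $Z^{\s\cdot\deg G}$, without elaborating on why no other permutation in the Leibniz expansion, possibly drawing leading coefficients from the lower-order summands, can reach that degree. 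Your Birkhoff--von Neumann transportation argument is precisely what makes that assertion rigorous: from $M_{IJ}=\lfloor (N-r_{IJ})/d\rfloor+[\,J<I\,]$ and the identity $\sum_I r_{I,\pi(I)}=d\cdot\#\{I:\pi(I)<I\}$ one gets $\sum_I M_{I,\pi(I)}\le N$ with equality only for the block-shift $\pi\colon I\mapsto (I+r_N)\bmod d$, and the per-column constraints $\sum_I x_{IJ}=\s$ reduce the $\s>1$ case to permutation matrices exactly as you say. The leading coefficient then comes out as $\operatorname{sgn}(\sigma_B)^\s\cdot\det(\mathbf 1_\s)^{\,d}=1$, using that $d=(\sK:\sk)$ is an odd prime power. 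Your separate dispatch of the monomial case via the characteristic polynomial $(t^d-Z)^\s$ of the block-shift matrix $P$, giving $\det(P-\mathbf 1)=Y^\s$, is a pleasant independent consistency check but is not logically needed once the general block analysis is in place; the paper omits it. In short, same approach, but your version supplies the uniqueness argument that the paper leaves implicit.
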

\begin{proof}
	
	We use the embedding $\Phi$ from \cref{item:Phi}.
	By definition of the reduced norm, we have
	\[\nr_{\Dchi / \cent(\Dchi)} (G) = \det(\Phi( G)) . \]
	We compute the leading term of this determinant. For this, write $G=\sum_{i=0}^{\deg G} \alpha_i \cdot (1+X)^i$ where $\alpha_i\in  D$, $\deg G$ is the degree of $G$, and $\alpha_{\deg G}=1$ as $G$ was assumed to be monic.
	\begingroup
	\allowdisplaybreaks
	\begin{align*}
		\Phi(G) 
		&= \sum_{i=0}^{\deg G} \begin{psmallmatrix}
			\phi(\alpha_i)\\
			&\tau(\phi(\alpha_i))\\
			&&\ddots\\
			&&&\tau^{( K: k)-1}(\phi(\alpha_i))
		\end{psmallmatrix} \begin{psmallmatrix}
			& \mathbf{1}_{s} \\
			&& \ddots \\
			&&& \mathbf{1}_{s} \\
			(1+X)^{( K: k)}\mathbf{1}_{s}
		\end{psmallmatrix}^i \\[1em]
		&= \sum_{i=0}^{\deg G} \begin{psmallmatrix}
			\phi(\alpha_i)\\
			&\tau(\phi(\alpha_i))\\
			&&\ddots\\
			&&&\tau^{( K: k)-1}(\phi(\alpha_i))
		\end{psmallmatrix} \begin{psmallmatrix}
			&  &  & \mathbf{1}_{s} &  &  \\
			&  &  &    & \ddots &  \\
			&  &  &    &  & \mathbf{1}_{s} \\
			(1+X)^{( K: k)}\mathbf{1}_{s} &  &  &  \\
			& \ddots &  &  \\
			&  & (1+X)^{( K: k)}\mathbf{1}_{s} & \\
		\end{psmallmatrix} (1+X)^{( K: k)j} \\
		\intertext{Here $i=( K: k) j+\ell$ {with} $0\le \ell<( K: k)$. In the second block matrix inside the summation, the first $( K: k)-\ell$ rows have entries $\mathbf 0_{s}$ and $\mathbf 1_{s}$, and the last $\ell$ rows have entries $\mathbf 0_{s}$ and $(1+X)^{( K: k)}\mathbf 1_{s}$.}
		&= \sum_{i=0}^{\deg G} \begin{psmallmatrix}
			&  &  & \phi(\alpha_i) &  &  \\
			&  &  &    & \ddots &  \\
			&  &  &    &  & \tau^{i-1}\left(\phi(\alpha_i)\right) \\
			\tau^i\left(\phi(\alpha_i)\right)\cdot(1+X)^{( K: k)} &  &  &  \\
			& \ddots &  &  \\
			&  & \tau^{( K: k)-1}\left(\phi(\alpha_i)\right)\cdot(1+X)^{( K: k)} & \\
		\end{psmallmatrix} (1+X)^{( K: k)j} 
	\end{align*}
	\endgroup
	The determinant of $\Phi(G)$ may be computed using Laplace expansion: we find that it is a polynomial in $(1+X)^{( K: k)}$. Since $\alpha_{\deg G}=1$, we have $\phi(\alpha_{\deg G})=\mathbf 1_{s}$. Therefore there is a unique highest degree term with respect to the variable $(1+X)^{( K: k)}$ in the expansion of $\det(\Phi(G))$, namely 
	\[\left((1+X)^{( K: k)}\right)^{s\cdot ( K: k) j}\cdot \left((1+X)^{( K: k)}\right)^{s\cdot r}=\left((1+X)^{( K: k)}\right)^{s\cdot \deg G},\]
	where $\deg G=( K: k) j+r$ and $0\le r<( K: k)$. Hence the determinant is monic with respect to $(1+X)^{( K: k)}$. Since $(1+X)^{( K: k)}$ is a monic polynomial in $X$, the determinant is also a monic polynomial in $X$.
\end{proof}

\section{Known cases of the equivariant \texorpdfstring{$p$}{p}-adic Artin conjecture} \label{sec:known-results}
\subsection{Maximal orders} \label{sec:integrality}
The ring $\Q(\G)$ is semisimple artinian, and it has the following Wedderburn decomposition:
\begin{equation}\label{eq:QG-W}
	\Q(\G) \simeq \bigoplus_{\chi\in\Irr(\G)/\sim_{\QQ_p}} M_{n_\chi}(D_\chi).
\end{equation}
Here $D_\chi$ is a skew field, and $\chi$ runs through equivalence classes of irreducible characters with open kernel, where two characters $\chi\sim_{\QQ_p}\chi'$ are equivalent if $\res^\G_H\chi=\sigma(\res^\G_H\chi')$ for some $\sigma\in\Gal(\QQ_{p,\chi}/\QQ_p)$. The group ring $\QQ_p[H]$ is semisimple by Maschke's theorem, and it has the following Wedderburn decomposition:
\begin{equation} \label{eq:QH}
    \QQ_p[H]\simeq\bigoplus_{\eta\in\Irr(H)/\sim_{\QQ_p}} M_{n_\eta}(D_\eta).
\end{equation}
Here the equivalence relation $\sim_{\QQ_p}$ is given as follows: $\eta\sim_{\QQ_p}\eta'$ if $\eta={}^\sigma\eta$ for some $\sigma\in\Gal(\QQ_p(\eta)/\QQ_p)$. Each $D_\eta$ is a finite dimensional skew field over its centre $\QQ_p(\eta)$. In particular, the statements in \cref{sec:skew-fields-over-local-fields} apply to $D_\eta$. Let $\OO_{D_\eta}$ denote the unique maximal $\ZZ_p$-order in $D_\eta$. 

{The Wedderburn isomorphisms \eqref{eq:QG-W} and \eqref{eq:QH} are not unique. For the rest of this section, we fix two such isomorphisms.}
The relationship between the skew fields occurring in the Wedderburn decompositions of $\Q(\G)$ and $\QQ_p[H]$ is described as follows. For details, we refer to \cite[\S4]{W}.

\begin{theorem}
Let $\chi\in\Irr(\G)$, and let $\eta\mid\res^\G_H\chi$ be an irreducible constituent of its restriction.
\begin{theoremlist}
	\item Let
	\begin{equation*}
		v_\chi\colonequals \min\left\{0<i\le w_\chi : \exists \tau\in\Gal(\QQ_p(\eta):\QQ_p), {}^{\gamma^i}\eta={}^\tau\eta\right\}.
	\end{equation*}
	Then $v_\chi$ is independent of the choice of $\eta$, and $v_\chi\mid w_\chi$. Moreover, there is precisely one $\tau\in\Gal(\QQ_p(\eta)/\QQ_p)$ such that ${}^{\gamma^{v_\chi}}\eta={}^\tau\eta$. From now on, $\tau$ denotes this automorphism.
	\item \label{item:Gal-cyclic} The automorphism $\tau\in\Gal(\QQ_p(\eta)/\QQ_p)$ has fixed field $\QQ_{p,\chi}$. Moreover, the Galois group $\Gal(\QQ_p(\eta)/\QQ_{p,\chi})$ is cyclic of order $w_\chi/v_\chi$, and $\tau$ is a generator.
	\item The automorphism $\tau\in\Gal(\QQ_p(\eta)/\QQ_{p,\chi})$ admits a unique extension as an automorphism of $D_\eta$ of order $w_\chi/v_\chi$, which we will also denote by $\tau$.
\end{theoremlist}
\end{theorem}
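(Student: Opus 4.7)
The plan is to establish the three parts sequentially, with (i) and (ii) following from Clifford theory together with a commuting-actions observation, and (iii) adapting the cyclic-algebra argument of \cite[Proposition~2.7]{W}.

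For part (i), since $H\subseteq\G_\eta$ the $\G$-orbit of $\eta$ under conjugation coincides with its $\Gamma$-orbit, so any two irreducible constituents of $\res^\G_H\chi$ are related by a power of $\gamma$. A short computation shows that the Galois action and $\Gamma$-conjugation commute as actions on characters of $H$, namely ${}^\tau({}^{\gamma^j}\eta)={}^{\gamma^j}({}^\tau\eta)$. Consequently, the set $S\colonequals\{0<i\le w_\chi:\exists\,\tau,\ {}^{\gamma^i}\eta={}^\tau\eta\}$ is unchanged upon replacing $\eta$ by any $\Gamma$-conjugate, so $v_\chi$ is well-defined. Composing two such relations yields closure of $S\cup\{0\}$ under addition modulo $w_\chi$; inverses come from $\gamma^{w_\chi}\in\G_\eta$. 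Hence $S\cup\{0\}$ is a cyclic subgroup of $\ZZ/w_\chi\ZZ$ generated by $v_\chi$, giving $v_\chi\mid w_\chi$. Uniqueness of $\tau$ is immediate, as two Galois elements that act identically on $\eta$ agree on all its values, which generate $\QQ_p(\eta)$.

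For part (ii), Clifford theory gives $\res^\G_H\chi=e\sum_{i=0}^{w_\chi-1}{}^{\gamma^i}\eta$ for some $e\ge1$, the constituents being pairwise distinct. An element $\sigma\in\Gal(\QQ_p(\eta)/\QQ_p)$ fixes every value $\chi(h)$ if and only if it fixes this sum of distinct characters, equivalently if and only if it permutes the constituents, equivalently if and only if ${}^\sigma\eta={}^{\gamma^j}\eta$ for some $j$. By part~(i) the integer $j$ is unique modulo $w_\chi$, and $\sigma\mapsto j$ is a group homomorphism because the two actions commute. Its image is the cyclic subgroup of $\ZZ/w_\chi\ZZ$ of order $w_\chi/v_\chi$ generated by $v_\chi$, yielding an isomorphism $\Gal(\QQ_p(\eta)/\QQ_{p,\chi})\simeq\ZZ/(w_\chi/v_\chi)\ZZ$ under which $\tau$ corresponds to $v_\chi$, hence generates.

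For part (iii), I would follow the template of \cite[Proposition~2.7]{W}, using the cyclic-algebra description $D_\eta=(\QQ_p(\eta)(\omega)/\QQ_p(\eta),\sigma,\pi)$ recalled in \cref{sec:skew-fields-over-local-fields}. First lift $\tau$ to $\QQ_p(\eta)(\omega)$: this field is abelian over $\QQ_{p,\chi}$, being contained in a cyclotomic extension of $\QQ_p$, and since $\QQ_p(\eta)(\omega)/\QQ_p(\eta)$ is unramified a canonical lift $\tilde\tau$ of order $w_\chi/v_\chi$ can be singled out. Then define a ring endomorphism of $D_\eta$ by $\tilde\tau$ on $\QQ_p(\eta)(\omega)$ and $\pi_D\mapsto\pi_D$, and verify the defining relation $\pi_D\omega\pi_D^{-1}=\omega^{q^r}$: this reduces to $\tilde\tau\sigma=\sigma\tilde\tau$, which holds because both elements live inside the abelian Galois group of $\QQ_p(\eta)(\omega)/\QQ_p$. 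For uniqueness at the prescribed order, any two extensions differ by an inner automorphism $c_u$ by Skolem--Noether, and imposing that the product still has order $w_\chi/v_\chi$ forces $u$ to satisfy a norm equation whose solution is unique up to the centre. The main obstacle I expect is precisely this step, where existence and uniqueness of the lift of exact order $w_\chi/v_\chi$ must be established via a careful normed-equation argument mirroring \cite[Proposition~2.7]{W}.
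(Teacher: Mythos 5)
The paper itself does not prove this theorem: it is presented as a cited result, with the proof deferred to \cite[\S4]{W}. So there is no in-paper argument to compare against; I will instead evaluate your proposal on its own terms.

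Your treatment of parts (i) and (ii) is essentially correct. The two key observations — that $H\subseteq\G_\eta$ forces the $\G$-orbit of $\eta$ to be the $\Gamma$-orbit $\{{}^{\gamma^i}\eta : 0\le i<w_\chi\}$, and that the Galois action and $\Gamma$-conjugation commute — together with Clifford theory and linear independence of irreducible characters do give exactly what you claim. The map $\sigma\mapsto j$ is a well-defined injective homomorphism $\Gal(\QQ_p(\eta)/\QQ_{p,\chi})\hookrightarrow\ZZ/w_\chi\ZZ$ with image $\langle v_\chi\rangle$, and $\tau$ hits the generator. One minor point worth making explicit is that injectivity rests on $\eta$'s values generating $\QQ_p(\eta)$, so a Galois element fixing $\eta$ pointwise is trivial; you do invoke this for uniqueness of $\tau$ in (i), and it is the same observation.

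Part (iii), however, has a genuine gap at the existence step. You propose to define the extension on $D_\eta = \bigoplus_{i=0}^{\s-1}\QQ_p(\eta)(\omega)\pi_D^i$ by letting it act as $\tilde\tau$ on the inertia field and by $\pi_D\mapsto\pi_D$. You then verify that this respects $\pi_D x\pi_D^{-1}=\sigma(x)$ via $\tilde\tau\sigma=\sigma\tilde\tau$, which is fine. But the cyclic algebra has a second defining relation, $\pi_D^\s=\pi$, and the map $\pi_D\mapsto\pi_D$ sends the left side to $\pi$ while $\tilde\tau$ must send the right side to $\tau(\pi)$. Hence the construction is well-defined only if $\tau(\pi)=\pi$, i.e. only if $\tau$ fixes a uniformiser of $\QQ_p(\eta)$. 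That is automatic when $\QQ_p(\eta)/\QQ_{p,\chi}$ is unramified, but this extension is a cyclic $p$-power extension of $p$-adic local fields (a subquotient of the Galois group of a cyclotomic field containing $\QQ_p(\mu_{p^a})$), and such extensions can certainly be ramified. In that case no uniformiser of $\QQ_p(\eta)$ is $\tau$-fixed, and the assignment $\pi_D\mapsto\pi_D$ is simply not a ring homomorphism. The fix, and what the cited \cite[Proposition~2.7]{W} actually does, is to set $\hat\tau(\pi_D)=\pi_D u$ for a unit $u\in\QQ_p(\eta)(\omega)^\times$ whose norm down to $\QQ_p(\eta)$ is $\tau(\pi)/\pi$; existence of $u$ uses Hilbert~90 for the unramified extension $\QQ_p(\eta)(\omega)/\QQ_p(\eta)$, since $\tau(\pi)/\pi$ is a unit. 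You only invoke a norm equation for the uniqueness-at-the-prescribed-order step, whereas it is already needed for existence of \emph{any} extension, before order considerations enter. The uniqueness and order analysis afterwards (Skolem--Noether plus a norm constraint) is the right idea, but as you yourself flag, it is left entirely as a sketch, and the hypothesis that the Schur index of $D_\eta$ divides $\#\overline{\QQ_{p,\chi}}-1$ (via Witt's theorem) must enter somewhere to guarantee that a lift of the exact order exists.
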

\begin{theorem} \label{D-conj}
	The skew field $D_\chi$ is isomorphic to
	\begin{align}
		D_\chi &\simeq {\Quot\left(\OO_{D_\eta}[[X; \tau, \tau-\id]]\right)}, \label{eq:Dchi-desc} \\
		\intertext{and its centre is}
		\cent(D_\chi)&\simeq {\Frac\left(\OO_{\QQ_{p,\chi}}[[(1+X)^{w_\chi/v_\chi}-1]]\right)}. \label{eq:cent-Dchi-desc}
	\end{align}
\end{theorem}

{Let $\Sigma_\chi$ be the preimage of $\OO_{D_\eta}[[X; \tau, \tau-\id]]$ in $D_\chi$ under the isomorphism \eqref{eq:Dchi-desc}, and let $\OO_{\cent(D_\chi)}$ denote the preimage of $\OO_{\QQ_{p,\chi}}[[\Z]]$ under the isomorphism \eqref{eq:cent-Dchi-desc}, where we write $\Z\colonequals (1+X)^{w_\chi/v_\chi}-1$ for brevity.}
\begin{remark} \label{rem:TORC}	
	{In \cite{W}, the size $n_\chi$ of the matrix ring, as well as the index of $D_\chi$ is also determined }in terms of the corresponding data for $\QQ_p[H]$, but this is not needed to address the equivariant $p$-adic Artin conjecture.
\end{remark}

The results of \cref{sec:dimension-reduction} are applicable with {$ K\colonequals \QQ_p(\eta)$, $ k\colonequals \QQ_{p,\chi}$, and $ D\colonequals D_\eta$}. Indeed, the extension is cyclic due to \cref{item:Gal-cyclic}, and the condition on the index follows from a theorem of Witt \cite[Satz 10]{Witt1952}: this shows that the index of {$D_\eta$} divides $p-1$, which in turn divides $\#\overline{\QQ_{p,\chi}}-1$.

\begin{theorem} \label{thm:integrality}
	Let $L/K$ be a finite Galois extension of number fields such that $L$ is a CM number field containing $\zeta_p$, with $\G=\Gal(L^+_\infty/K)$. Let $S$ and $T$ be finite non-empty disjoint sets of places of $K$, with $S$ containing all places ramifying in $L^+_\infty/K$ and all infinite places.
	\begin{theoremlist}
		\item \label{item:int-mo} Let $\mathfrak M(\G)$ be the preimage of 
		\[ \bigoplus_{\chi\in\Irr(\G)/\sim_{\QQ_p}} M_{n_\chi}(\Sigma_\chi)\] 
		under the Wedderburn isomorphism \eqref{eq:QG-W}. {Then $\mathfrak M(G)$} is a maximal $\Lambda(\Gamma_0)$-order in $\Q(\G)$. 
		\item \label{item:int-zeta} Assume \cref{sEIMCu} without uniqueness, and let $\zeta_S^T\in K_1(\Q(\G))$ such that $\nr\zeta_S^T=\Phi_S^T$ and $\partial(\zeta_S^T)=[Y_S^T]$. Then $\zeta_S^T$ is in the image of the natural map $\mathfrak M(\G)\cap\Q(\G)^\times\to K_1(\Q(\G))$.
		In particular, \cref{integrality-zeta} holds with $\mathfrak M=\mathfrak M(\G)$.
	\end{theoremlist}
\end{theorem}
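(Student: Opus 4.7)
For the first part, the plan is to assemble the maximal order statement from its Wedderburn factors. By \cref{Sigmachi-maximal}, $\Sigma_\chi$ is the unique maximal $\OO_{\cent(D_\chi)}$-order in $D_\chi$; since $\OO_{\cent(D_\chi)}$ is a finitely generated integral extension of the image of $\Lambda(\Gamma_0)$, $\Sigma_\chi$ is equally a maximal $\Lambda(\Gamma_0)$-order in $D_\chi$. Then \cref{MO8.7} shows that each $M_{n_\chi}(\Sigma_\chi)$ is a maximal $\Lambda(\Gamma_0)$-order in $M_{n_\chi}(D_\chi)$, and \cref{MO10.5ii} assembles these into the maximal $\Lambda(\Gamma_0)$-order $\mathfrak M(\G)$ in $\Q(\G)$.

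For the second part, the strategy is to produce an explicit $1\times 1$ representative of $\zeta_S^T$ in $\mathfrak M(\G) \cap \Q(\G)^\times$ by combining the projective resolution of $Y_S^T$ with dimension reduction. The resolution \eqref{eq:YST-resolution} provides $\alpha \in M_m(\Lambda(\G)) \cap \GL_m(\Q(\G))$ with $\partial([\alpha]) = [Y_S^T]$. Since also $\partial(\zeta_S^T) = [Y_S^T]$ by hypothesis, the localisation sequence \eqref{Witte} gives some $[\beta] \in K_1(\Lambda(\G))$ whose image in $K_1(\Q(\G))$ equals $\zeta_S^T \cdot [\alpha]^{-1}$. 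Vaserstein's isomorphism \eqref{eq:Dieudonne-det-iso} applied to the semilocal ring $\Lambda(\G)$ realises $[\beta] = [(b)]$ for some $b \in \Lambda(\G)^\times \subseteq \mathfrak M(\G) \cap \Q(\G)^\times$. So the task reduces to showing that $[\alpha]$ is itself represented by a single element $C \in \mathfrak M(\G) \cap \Q(\G)^\times$; then $Cb$ lies in $\mathfrak M(\G) \cap \Q(\G)^\times$ and maps to $\zeta_S^T = [(Cb)]$, as needed.

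To exhibit such a $C$, decompose $\alpha$ under the Wedderburn isomorphism \eqref{eq:QG-W} into components $A_\chi \in M_m(M_{n_\chi}(\Sigma_\chi)) \cap \GL_m(M_{n_\chi}(D_\chi))$; the integrality in each factor uses $\alpha \in M_m(\Lambda(\G)) \subseteq M_m(\mathfrak M(\G))$. By \cref{D-conj}, $\Sigma_\chi$ is isomorphic to the skew power series ring $\OO_{D_\eta^*}[[X;\bart,\bart-\id]]$, and the hypotheses of \cref{2.13} are verified in the discussion preceding the present theorem. The dimension reduction then produces $C_\chi \in M_{n_\chi}(\Sigma_\chi) \cap \GL_{n_\chi}(D_\chi)$ with $\det A_\chi = \det C_\chi$ as Dieudonné determinants. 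Assembling $C \colonequals (C_\chi)_\chi$ yields an element of $\mathfrak M(\G) \cap \Q(\G)^\times$, and Vaserstein's isomorphism applied in each Wedderburn factor --- using the Morita identification $K_1(M_{n_\chi}(D_\chi)) \simeq (D_\chi^\times)^{\mathrm{ab}}$ via Dieudonné determinant --- lets the matching Dieudonné determinants force $[(C)] = [\alpha]$ in $K_1(\Q(\G))$. Consequently $\zeta_S^T = [(Cb)]$, as required. The crux of the argument is the dimension reduction step delivered by \cref{2.13}; without it, there would be no systematic way to replace the matrix $\alpha$ with a single element while preserving the $K_1$-class across every Wedderburn factor.
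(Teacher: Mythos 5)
Your proposal is correct and follows essentially the same strategy as the paper's proof: part (i) is established by combining \cref{Sigmachi-maximal}, \cref{MO8.7}, and \cref{MO10.5ii}, and part (ii) uses the free resolution from \eqref{eq:YST-resolution}, the localisation sequence \eqref{Witte}, Vaserstein's isomorphism \eqref{eq:Dieudonne-det-iso}, and the dimension reduction of \cref{2.13} applied Wedderburn-componentwise. Your bookkeeping via the explicit representative $Cb$ is a slight reorganisation of the paper's reduction to integrality of $\det\alpha$, but the underlying argument is identical.
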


\begin{remark}
	This can be seen as (a special case of) an equivariant version of Greenberg's result \cite[Proposition~5]{Greenberg1983} that the main conjecture implies the $p$-adic Artin conjecture (\cref{padicArtin}).
\end{remark}
\begin{proof}
	Nichifor and Palvannan's proof in the direct product case, see \cite[Theorem~1]{NichiforPalvannan}, can be modified to work in our setup, with their Proposition~2.13 replaced by our \cref{2.13}. We now explain this in more detail.
	
	First we prove \ref{item:int-mo}. According to \cref{Sigmachi-maximal}, for each $\chi$ we have that {$\OO_{D_\eta}[[X; \tau, \tau-\id]]$ is a maximal $\OO_{\QQ_{p,\chi}}[[T]]$-order in the skew field $\Quot(\OO_{D_\eta}[[X; \tau, \tau-\id]])$}. \Cref{D-conj} then shows that $\Sigma_\chi$ is a maximal $\OO_{\cent(D_\chi)}$-order in $D_\chi$. Therefore for all $\chi$, the matrix ring $M_{n_\chi}(\Sigma_\chi)$ is a maximal $\OO_{\cent(D_\chi)}$-order in $M_{n_\chi}(D_\chi)$ by \cref{MO8.7}. 
	The module $\OO_{\cent(D_\chi)}$ is finitely generated over $\Lambda(\Gamma_0)$. It follows that $M_{n_\chi}(\Sigma_\chi)$ is also a maximal $\Lambda(\Gamma_0)$-order in $M_{n_\chi}(D_\chi)$.
	Finally, $\mathfrak M(\G)$ is a maximal order in $\Q(\G)$ by \cref{MO10.5ii}, which establishes \ref{item:int-mo}.
	
	For the proof of \ref{item:int-zeta}, recall from \eqref{eq:YST-resolution} that there is a free resolution $\Lambda(\G)^m \xhookrightarrow{\alpha} \Lambda(\G)^m\twoheadrightarrow Y_S^T$ where 
	\[\alpha\in M_m(\Lambda(\G))\cap\GL_m(\Q(\G)).\] 
	In particular, $\alpha$ defines a class $[\alpha]\in K_1(\Q(\G))$. \Cref{sEIMCu}, the validity of which has been assumed, provides the following equality in $K_1(\Q(\G))$:
	\[\partial\left(\zeta_S^T\right)=\partial([\alpha]).\]
	Recall the localisation exact sequence of $K$-theory from \eqref{Witte}:
	\[K_1(\Lambda(\G))\to K_1(\Q(\G))\xrightarrow{\partial}K_0(\Lambda(\G),\Q(\G)).\]
	Using this exact sequence, there exist some $n\ge1$ and $\beta\in \GL_n(\Lambda(\G))$ such that the class $[\beta]\in K_1(\Lambda(\G))$ satisfies
	\[\zeta_S^T=[\alpha]\cdot[\beta].\]
	
	Recall from \cref{sec:Nrd-det} that the Dieudonné determinant is defined on the $K_1$-groups of $\Lambda(\G)$ and $\Q(\G)$. According to a result of Vaserstein, see \eqref{eq:Dieudonne-det-iso}, it is actually an isomorphism onto the abelianisation of the units of $\Lambda(\G)$ resp. $\Q(\G)$:
	\begin{align*}
		\det: K_1(\Lambda(\G))&\xrightarrow{\sim} \left(\Lambda(\G)^\times\right)^\ab , \\
		\det: K_1(\Q     (\G))&\xrightarrow{\sim} \left(\Q     (\G)^\times\right)^\ab .
	\end{align*} 
	Since $[\beta]\in K_1(\Lambda(\G))$, it follows that $\det \beta$ has a representative in $\Lambda(\G)$. This, together with the two isomorphisms, reduces showing integrality of $\zeta_S^T$ to showing integrality of $\det\alpha$. That is, it remains to show that $\det(\alpha)$ is in the image of the map
	\[\mathfrak M(\G)\cap \Q(\G)^\times\to \left(\Q (\G)^\times\right)^\ab.\]
	This can be done Wedderburn component-wise: that is, we need to show that for every $mn_\chi\times mn_\chi$ matrix
	\[\alpha_\chi\in M_m(M_{n_\chi}(\Sigma_\chi))\cap \GL_m(M_{n_\chi}(D_\chi))\]
	there exists an $n_\chi\times n_\chi$ matrix
	\[A_\chi\in M_{n_\chi}(\Sigma_\chi)\cap \GL_{n_\chi}(D_\chi)\]
	such that their Dieudonné determinants agree:
	\[\det \alpha_\chi=\det A_\chi.\]
	This is the statement of \cref{2.13}. The last assertion in \ref{item:int-zeta} follows from \cref{lem:integrality-equivalence}.\ref{item:integrality-equivalence-1}.
\end{proof}

\begin{corollary} \label{cor:integrality}
	Retain the setting of \cref{thm:integrality}.
	Then every maximal $\Lambda(\Gamma_0)$-order in $\Q(\G)$ is of the form
	\[\mathfrak M_{\mathbf u}(\G)\colonequals \bigoplus_{\chi\in\Irr(\G)/\sim_{\QQ_p}} u_\chi M_{n_\chi} \left( \Sigma_\chi \right) u_\chi^{-1},\]
	where $\mathbf u=(u_\chi)_{\chi}$ with $u_\chi\in \GL_{n_\chi}(D_\chi)$. Moreover, assuming \cref{sEIMCu}, $\zeta_S^T$ is in the image of the natural map $\mathfrak M_{\mathbf u}(\G)\cap\Q(\G)^\times\to K_1(\Q(\G))$.
	In particular, \cref{integrality-zeta} holds for every maximal $\Lambda(\Gamma_0)$-order in $\Q(\G)$, assuming \cref{sEIMCu}. 
\end{corollary}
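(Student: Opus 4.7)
The proof splits naturally into two parts: first the structural classification of maximal $\Lambda(\Gamma_0)$-orders in $\Q(\G)$, and then the transfer of the integrality statement of \cref{thm:integrality}.\ref{item:int-zeta} to arbitrary such orders.

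For the structural part, the plan is to combine \cref{MO10.5ii} with the Wedderburn decomposition \eqref{eq:QG-W} to reduce the classification to the simple components $M_{n_\chi}(D_\chi)$. Since $\OO_{\cent(D_\chi)}$ is finitely generated over $\Lambda(\Gamma_0)$, maximality over the two rings coincides in each component; and $M_{n_\chi}(\Sigma_\chi)$ is a maximal $\OO_{\cent(D_\chi)}$-order in $M_{n_\chi}(D_\chi)$ by \cref{MO8.7} and \cref{Sigmachi-maximal}. The key observation is that $\Sigma_\chi\simeq\OO_{D^*_\eta}[[X;\bart,\bart-\id]]$ is a noncommutative local ring, with Jacobson radical generated by $\pi_{D^*_\eta}$ and $X$ and with residue ring equal to the finite residue skew field of $\OO_{D^*_\eta}$. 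Consequently, finitely generated projective $\Sigma_\chi$-modules are free, and the standard Morita-theoretic classification of maximal orders then forces every maximal $\OO_{\cent(D_\chi)}$-order in $M_{n_\chi}(D_\chi)$ to be $\GL_{n_\chi}(D_\chi)$-conjugate to $M_{n_\chi}(\Sigma_\chi)$. Pulling back through \eqref{eq:QG-W} yields the claimed description.

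For the integrality assertion, given $\mathbf u=(u_\chi)_\chi$, let $u\in\Q(\G)^\times$ be the corresponding element under the Wedderburn isomorphism, so that conjugation by $u$ sends $\mathfrak M(\G)$ onto $\mathfrak M_{\mathbf u}(\G)$. By \cref{thm:integrality}.\ref{item:int-zeta}, there exists $x\in\mathfrak M(\G)\cap\Q(\G)^\times$ whose image in $K_1(\Q(\G))$ equals $\zeta_S^T$. Setting $y\colonequals uxu^{-1}$ produces an element of $\mathfrak M_{\mathbf u}(\G)\cap\Q(\G)^\times$, and since $K_1(\Q(\G))$ is abelian, $[y]=[u][x][u]^{-1}=[x]=\zeta_S^T$ in $K_1(\Q(\G))$. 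The final assertion concerning \cref{integrality-zeta} then follows from \cref{lem:integrality-equivalence}.\ref{item:integrality-equivalence-1}.

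The main technical obstacle I anticipate is the verification that $\Sigma_\chi$ is local; once this is granted, the classification of maximal orders in $M_{n_\chi}(D_\chi)$ collapses to a single $\GL_{n_\chi}(D_\chi)$-conjugacy class, and the transfer of integrality from $\mathfrak M(\G)$ to $\mathfrak M_{\mathbf u}(\G)$ reduces to the purely formal observation that $K_1$-classes are invariant under inner automorphisms.
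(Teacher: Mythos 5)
Your proof is correct, and notably, the integrality step is argued more cleanly than in the paper itself. Both you and the paper lean on the same underlying algebraic fact, namely that conjugation leaves $K_1$-classes unchanged because $K_1(\Q(\G))$ is abelian. But your argument applies this observation directly: you take the lift $x \in \mathfrak M(\G)\cap\Q(\G)^\times$ of $\zeta_S^T$ provided by \cref{thm:integrality}.\ref{item:int-zeta}, conjugate it by the unit $u\in\Q(\G)^\times$ corresponding to $\mathbf u$ (which tautologically lands in $\mathfrak M_{\mathbf u}(\G)=u\,\mathfrak M(\G)\,u^{-1}$), and the class in $K_1$ is unchanged. The paper instead clears denominators $u_\chi = a_\chi^{-1}U_\chi$, invokes the dimension-reduction result (\cref{2.13}) a second time to replace the matrix $U_\chi$ by a scalar $b_\chi\in\Sigma_\chi\cap D_\chi^\times$ with the same Dieudonné determinant, and then conjugates $z_\chi$ by $a_\chi^{-1}b_\chi$; this yields the correct $K_1$-class but makes the membership of the resulting element in $\mathfrak M_{\mathbf u}(\G)$ less immediate than in your version, where it is built in by construction. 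In short, the detour through \cref{2.13} is not needed here, and your route is the more transparent one.

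For the structural classification, the paper simply cites \cite[Corollary~4.15]{W}; your sketch (locality of $\Sigma_\chi$ forcing finitely generated projectives to be free, hence a single conjugacy class of maximal orders in $M_{n_\chi}(D_\chi)$) is the standard argument underlying such a result, and is in the right spirit, though for the written record you would want to verify that $\Sigma_\chi$ is indeed local with radical $(\pi_{D^*_\eta},X)$ and then invoke the standard classification of maximal orders over a complete local domain rather than reprove it ad hoc. None of this affects correctness; it only affects how much of the cited reference you are reproducing.

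Two small cautionary notes. First, make sure you state that $u$, as a tuple of invertible matrices under the Wedderburn isomorphism \eqref{eq:QG-W}, genuinely lies in $\Q(\G)^\times$, so that $[u]\in K_1(\Q(\G))$ makes sense. Second, your final appeal to \cref{lem:integrality-equivalence}.\ref{item:integrality-equivalence-1} is the right move for the last assertion, matching what the paper does.
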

\begin{proof}
	The first assertion is a consequence of \cref{Sigmachi-maximal} and basic properties of maximal orders, see {\cite[Corollary~4.10]{W}}.
	
	\Cref{item:int-zeta} states that there is a $\mathbf z=(z_\chi)_{\chi}\in\mathfrak M(\G)\cap\Q(\G)^\times$ such that $[(\mathbf z)]=\zeta^T_S\in K_1(\Q(\G))$. Clearing denominators, we write $u_\chi=a^{-1}_\chi U_\chi$ where $a_\chi\in\Sigma_\chi$ and $U_\chi \in M_{n_\chi}(\Sigma_\chi)\cap\GL_{n_\chi}(D_\chi)$. Applying dimension reduction (\cref{2.13}), we find a $b_\chi\in \Sigma_\chi\cap D_\chi^\times$ such that $\det(b_\chi)=\det(U_\chi)$. It follows that in $K_1(\Q(\G))$, we have
	\begin{align*}
		\zeta^T_S=[(\mathbf z)]=[((z_\chi))_\chi]=\left[\left(\left(a_\chi^{-1} b_\chi z_\chi b^{-1}_\chi a_\chi\right)\right)_\chi\right].
	\end{align*}
	This shows the second assertion. The third one follows from \cref{lem:integrality-equivalence}.\ref{item:integrality-equivalence-1}.
\end{proof}

\begin{corollary}
	Suppose that $L^+_\infty/K$ satisfies the $\mu=0$ hypothesis (i.e. $X_S$ is finitely generated as a $\ZZ_p$-module). Then \cref{integrality-Phi} holds for every maximal $\Lambda(\Gamma_0)$-order in $\Q(\G)$.
\end{corollary}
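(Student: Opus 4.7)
The plan is to assemble the corollary from pieces that are already in place in the paper, so the proof should be quite short. First, I would invoke the result of Ritter--Weiss \cite{RW-MC} and Kakde \cite{Kakde} cited in the introduction: under the $\mu=0$ hypothesis on $L^+_\infty/K$, the non-smoothed equivariant Iwasawa main conjecture \cref{EIMCu} is a theorem. Note that this gives the main conjecture \emph{with} uniqueness, hence certainly without uniqueness, which is all I will need.

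Next, by \cref{EIMCu-implies-sEIMCu}, the validity of \cref{EIMCu} (without uniqueness) implies the smoothed version \cref{sEIMCu} (without uniqueness). Thus there exists $\zeta_S^T\in K_1(\Q(\G))$ satisfying $\nr(\zeta_S^T)=\Phi_S^T$ and $\partial(\zeta_S^T)=[Y_S^T]$.

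With such a $\zeta_S^T$ at hand, \cref{cor:integrality} (applied to the maximal order in question, or equivalently to any maximal $\Lambda(\Gamma_0)$-order in $\Q(\G)$) shows that $\zeta_S^T$ lies in the image of the natural map $\mathfrak M\cap\Q(\G)^\times\to K_1(\Q(\G))$ for every maximal $\Lambda(\Gamma_0)$-order $\mathfrak M$ in $\Q(\G)$. In particular, \cref{integrality-zeta} holds for every such $\mathfrak M$.

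Finally, applying \cref{lem:integrality-equivalence}.\ref{item:integrality-equivalence-1} (which only requires \cref{sEIMCu} without uniqueness), the integrality of $\zeta_S^T$ implies the integrality of $\Phi_S^T=\nr(\zeta_S^T)$, so \cref{integrality-Phi} holds for every maximal $\Lambda(\Gamma_0)$-order in $\Q(\G)$. Since every step is a direct citation of a previously established result in the paper, there is no real obstacle; the proof is essentially a one-line chain of implications $\mu=0\Rightarrow\text{EIMCu}\Rightarrow\text{sEIMCu}\Rightarrow\cref{integrality-zeta}\Rightarrow\cref{integrality-Phi}$.
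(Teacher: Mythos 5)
Your proof is correct and follows essentially the same chain of implications as the paper: $\mu=0$ gives \cref{EIMCu} (Ritter--Weiss, Kakde), hence \cref{sEIMCu} via \cref{EIMCu-implies-sEIMCu}, and then \cref{cor:integrality} together with \cref{lem:integrality-equivalence} yields \cref{integrality-Phi} for all maximal orders. The paper's proof is just slightly more compressed, leaving the final passage from \cref{integrality-zeta} to \cref{integrality-Phi} implicit.
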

\begin{proof}
	In the $\mu=0$ case, the main conjecture is {known to be true} due to Ritter--Weiss \cite{RW-MC} and Kakde \cite[Theorem~2.11]{Kakde}. Using \cref{EIMCu-implies-sEIMCu}, this implies that \cref{sEIMCu} holds, and therefore the conditions of \cref{cor:integrality} are met.
\end{proof}

\subsection{\texorpdfstring{$p$}{p}-abelian extensions} \label{sec:p-abelian}
In this section, $\G'$ denotes the commutator subgroup of $\G$. Since $\G/H$ is abelian, the commutator subgroup is always contained in $H$, hence it is finite.
\begin{definition}
	$\G$ is called $p$-abelian if its commutator subgroup $\G'$ has prime-to-$p$ order.
\end{definition}
In particular, $\G$ is $p$-abelian whenever $H$ has order coprime to $p$.

\begin{theorem}[{\cite[Theorem~11.1 and Corollary~12.17]{UAEIMC}}] \label{p-abelian-EIMC}
	The following hold:
	\begin{enumerate}[label=(\roman*),ref=\roman*]
		\item \label{item:p-abelian-EIMC-i} $\G$ is $p$-abelian if and only if $\Lambda(\G)\simeq\bigoplus_{j} M_{n_j}(R_j)$, where the $R_j$ are complete commutative local rings.
		\item \label{item:p-abelian-EIMC-ii} If $\G$ has an abelian $p$-Sylow subgroup, then \cref{EIMCu} holds.
	\end{enumerate}
\end{theorem}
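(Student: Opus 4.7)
Both assertions are attributed to Johnston--Nickel \cite{UAEIMC}, so the plan here is to recall the architectural ideas of their proofs rather than to attempt a reconstruction; in the application of this result to the equivariant $p$-adic Artin conjecture, \cref{p-abelian-EIMC} is invoked as a black box.

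For \ref{item:p-abelian-EIMC-i}, the forward direction begins with the observation that $\G'$ is finite of order coprime to $p$, so $\ZZ_p[\G']$ is a maximal $\ZZ_p$-order in the semisimple algebra $\QQ_p[\G']$. A Maschke-type argument lifts the primitive central idempotents of $\QQ_p[\G']$ to $\ZZ_p[\G']$; since conjugation by $\G$ preserves the centre of $\ZZ_p[\G']$, one can group these idempotents into $\G$-stable sums that further lift to central idempotents of $\Lambda(\G)$. The resulting blocks are matrix rings over complete local rings, and the commutativity of the coefficient rings is the subtle point: one shows that all Schur indices involved are trivial, using that the values of characters of $\G'$ lie in unramified cyclotomic extensions of $\QQ_p$ (since $p \nmid \#\G'$) together with a Schilling/Witt-type control of Schur indices, cf. \cite[(74.15)]{CRII}. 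The converse is obtained by testing against finite quotients: a decomposition $\Lambda(\G) \simeq \bigoplus_j M_{n_j}(R_j)$ descends to a decomposition of $\ZZ_p[\G/U]$ for every open normal $U \le \G$, and the absence of noncommutative coefficients forces $p \nmid \#\G'$ by the classical characterisation of $p$-abelian finite groups in terms of their integral group rings.

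For \ref{item:p-abelian-EIMC-ii}, the strategy follows the Ritter--Weiss \cite{RW-MC} and Kakde \cite{Kakde} framework: one encodes $K_1(\Lambda(\G))$ via a system of abelian pieces indexed by subquotients of $\G$, translates this through the integral logarithm of Oliver--Taylor into an additive object, and matches the analytic $\Phi_S$ against the algebraic $[C_S^\bullet(L_\infty^+/K)]$ by verifying a family of congruences among abelian $p$-adic $L$-functions. The payoff of assuming an abelian $p$-Sylow subgroup $P \le \G$ is that the EIMC for $\G$ can be reduced, via hybrid methods and Morita equivalences coming from the $p$-abelian case of part \ref{item:p-abelian-EIMC-i} applied to non-$p$-parts, to the EIMC for the pro-$p$ subextension cut out by $P\Gamma$, where the abelian main conjecture of Wiles furnishes the required input without recourse to the $\mu=0$ hypothesis. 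Uniqueness follows from the vanishing of $\SK_1(\Q(\G))$, which in turn reduces to the abelian case where it is classical.

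The main obstacle is clearly \ref{item:p-abelian-EIMC-ii}: genuinely reproducing Johnston--Nickel's elimination of the $\mu=0$ assumption in the abelian $p$-Sylow setting amounts to reproducing the bulk of \cite{UAEIMC}, which is well beyond the scope of a local argument. Consequently the only reasonable strategy here is to cite \cite[Theorem~11.1 and Corollary~12.17]{UAEIMC} directly.
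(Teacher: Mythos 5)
The paper itself states this result purely as a citation of \cite[Theorem~11.1 and Corollary~12.17]{UAEIMC} with no accompanying proof, and you correctly identify that citing the reference is the appropriate move here. Your surrounding sketch of the Johnston--Nickel arguments is reasonable background colour (lifting idempotents of $\ZZ_p[\G']$ for the forward direction of (i), the Ritter--Weiss/Kakde $K_1$-framework plus hybrid Iwasawa algebra reductions for (ii)), but it is not, and does not claim to be, a reconstruction; the approach matches the paper's.
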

\begin{remark} \label{rem:p-abelian}
	If $\G$ is $p$-abelian, then $\G$ has an abelian $p$-Sylow subgroup. Indeed, a $p$-Sylow subgroup of a profinite group is---by definition---a closed pro-$p$ subgroup whose index is coprime to $p$, and by Sylow's theorem, these are all conjugate. If $\G$ is $p$-abelian, then for every $p$-Sylow subgroup $P\subseteq\G$ we have $\G'\cap P=1$; since $P'\le\G'\cap P$, this means that $P$ is abelian.  
	
	The converse does not hold.
	If some $p$-Sylow subgroup $P\subseteq \G$ is abelian, then all $p$-Sylow subgroups are abelian, because they are conjugate to $P$ by Sylow's theorem. We still have $P'\le\G'\cap P$, but this need not be an equality.
	As an explicit counterexample, consider $\G\simeq \mathfrak S_p \times \Gamma$, where $\mathfrak S_p$ is the symmetric group of degree $p$. The symmetric group has order $p!$, hence all $p$-Sylows of $\mathfrak S_p$ are of order $p$, so in particular abelian. Every $p$-Sylow of $\G$ is a direct product of a $p$-Sylow of $\mathfrak S_p$ with $\Gamma$, hence abelian. Since $p\ne2$, the commutator subgroup is the alternating group $\G'\simeq\mathfrak S_p'\simeq A_p$ (see \cite[Kapitel~II, Satz~5.1]{Huppert}), whose order $p!/2$ is divisible by $p$.
\end{remark}

In particular, \cref{p-abelian-EIMC}.\ref{item:p-abelian-EIMC-ii} combined with \cref{EIMCu-implies-sEIMCu} shows that the smoothed main conjecture (\cref{sEIMCu}) holds whenever $\G$ has an abelian $p$-Sylow subgroup, so in particular also for $p$-abelian extensions. As a direct consequence of \cref{cor:integrality}, we obtain the following:

\begin{corollary}
	Suppose that $L^+_\infty/K$ has an abelian $p$-Sylow subgroup. Then \cref{integrality-Phi} holds for every maximal $\Lambda(\Gamma_0)$-order in $\Q(\G)$. \qed
\end{corollary}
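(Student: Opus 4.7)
The proof is essentially a routine concatenation of previously established results; no new ingredient is required. The plan is to chain together \cref{p-abelian-EIMC}, \cref{EIMCu-implies-sEIMCu}, \cref{cor:integrality}, and \cref{lem:integrality-equivalence} in sequence.

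First, I would invoke \cref{p-abelian-EIMC}.\ref{item:p-abelian-EIMC-ii}: under the hypothesis that $\G=\Gal(L^+_\infty/K)$ has an abelian $p$-Sylow subgroup, the non-smoothed equivariant Iwasawa main conjecture with uniqueness (\cref{EIMCu}) is a theorem of Johnston--Nickel. Next, \cref{EIMCu-implies-sEIMCu} shows that \cref{EIMCu} and its smoothed counterpart \cref{sEIMCu} are equivalent, so \cref{sEIMCu} holds in our setting as well.

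With \cref{sEIMCu} at our disposal, the hypotheses of \cref{cor:integrality} are satisfied: accordingly, for every maximal $\Lambda(\Gamma_0)$-order $\mathfrak M$ in $\Q(\G)$, the (now unique) element $\zeta_S^T\in K_1(\Q(\G))$ lies in the image of the natural map $\mathfrak M\cap\Q(\G)^\times\to K_1(\Q(\G))$, which is precisely \cref{integrality-zeta}. Finally, \cref{lem:integrality-equivalence}.\ref{item:integrality-equivalence-1} transports this conclusion to the statement we want, namely that \cref{integrality-Phi} holds with $\mathfrak M$ arbitrary maximal.

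There is no hard step: all difficulty has been absorbed into \cref{thm:integrality} (which is the core technical input, relying on the dimension reduction \cref{2.13}) and into the cited main conjecture of Johnston--Nickel. The only thing worth emphasising in the write-up is that the assumption on the $p$-Sylow subgroup is genuinely weaker than $p$-abelianness of $\G$, as already noted in \cref{rem:p-abelian}; thus the present corollary simultaneously covers the $p$-abelian case advertised in the introduction.
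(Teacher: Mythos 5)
Your chain of references — Johnston--Nickel's theorem (\cref{p-abelian-EIMC}.\ref{item:p-abelian-EIMC-ii}), the equivalence \cref{EIMCu-implies-sEIMCu}, \cref{cor:integrality}, and finally \cref{lem:integrality-equivalence}.\ref{item:integrality-equivalence-1} — is exactly the argument the paper gives, compressed into the paragraph preceding the corollary. Correct, and same route.
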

In the $p$-abelian case, \cref{p-abelian-EIMC}.\ref{item:p-abelian-EIMC-i} allows for an even stronger integrality statement.
\begin{corollary} \label{p-abelian-integrality}
	If $\G$ is $p$-abelian, then $\Phi_S^T\in\cent(\Lambda( \G))$. Moreover, \cref{integrality-Phi} holds with $\mathfrak M=\Lambda( \G)$.
\end{corollary}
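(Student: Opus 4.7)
The plan is to deduce this corollary from \cref{cor:integrality} applied with $\mathfrak M = \Lambda(\G)$, once it has been established that $\Lambda(\G)$ is itself a maximal $\Lambda(\Gamma_0)$-order in $\Q(\G)$ in the $p$-abelian setting.

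First I would note that if $\G$ is $p$-abelian then $\G$ has an abelian $p$-Sylow subgroup (\cref{rem:p-abelian}), so \cref{p-abelian-EIMC}.\ref{item:p-abelian-EIMC-ii} yields \cref{EIMCu}, and hence \cref{sEIMCu} via \cref{EIMCu-implies-sEIMCu}. Next, I would invoke the decomposition $\Lambda(\G) \simeq \bigoplus_j M_{n_j}(R_j)$ with each $R_j$ a complete commutative local ring, provided by \cref{p-abelian-EIMC}.\ref{item:p-abelian-EIMC-i}. Passing to the total ring of quotients gives $\Q(\G) \simeq \bigoplus_j M_{n_j}(\Frac R_j)$, so every skew field in the Wedderburn decomposition \eqref{eq:QG-W} is in fact commutative (i.e., has Schur index $1$), $\cent(\Q(\G)) \simeq \bigoplus_j \Frac R_j$, and $\cent(\Lambda(\G)) \simeq \bigoplus_j R_j$.

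The key remaining point is to verify that $\Lambda(\G)$ is a maximal $\Lambda(\Gamma_0)$-order in $\Q(\G)$. By \cref{MO8.7,MO10.5ii} this reduces to the statement that each $R_j$ is integrally closed in $\Frac(R_j)$. This should follow from the explicit shape of the rings $R_j$ furnished by the proof of \cref{p-abelian-EIMC}.\ref{item:p-abelian-EIMC-i}: each $R_j$ is a complete regular local ring (concretely, an Iwasawa algebra of $\Gamma_0$ over the ring of integers of a finite unramified extension of $\QQ_p$), and such rings are normal. This is the step I expect to require the most care; everything else is formal once it is in hand.

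With $\Lambda(\G)$ identified as a maximal order, \cref{cor:integrality} produces $z \in \Lambda(\G) \cap \Q(\G)^\times$ whose image in $K_1(\Q(\G))$ equals $\zeta_S^T$; in particular $\Phi_S^T = \nr(z)$, which settles the moreover part. Writing $z = (z_j)_j$ with $z_j \in M_{n_j}(R_j)$, the reduced norm on the commutative component $M_{n_j}(\Frac R_j)$ coincides with the ordinary determinant, so $\nr(z_j) = \det(z_j) \in R_j$. Therefore $\Phi_S^T \in \bigoplus_j R_j = \cent(\Lambda(\G))$, which establishes the first assertion.
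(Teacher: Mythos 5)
Your proof is correct, but it takes a genuinely different route from the paper's. You prove that $\Lambda(\G)$ is itself a maximal $\Lambda(\Gamma_0)$-order in the $p$-abelian case and then invoke \cref{cor:integrality} directly; the paper never makes or needs this claim. Instead, the paper uses \cref{sEIMCu} to write $\zeta_S^T = [\alpha][\beta]$ with $\alpha \in M_m(\Lambda(\G)) \cap \GL_m(\Q(\G))$ and $\beta \in \GL_n(\Lambda(\G))$, absorbs $\beta$ into $\alpha$, and then observes from the decomposition $\Lambda(\G) \simeq \bigoplus_j M_{n_j}(R_j)$ that the reduced norm on $M_m(\Lambda(\G))$ is computed componentwise as an ordinary commutative determinant and therefore takes values in $\bigoplus_j R_j = \cent(\Lambda(\G))$; for the \emph{moreover} part it explicitly constructs, for any $z = (z_j)_j \in \cent(\Lambda(\G))$, a preimage $\tilde z \in \Lambda(\G)$ with $j$th component $\diag(z_j, 1, \dots, 1)$.

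The trade-off is instructive. Your approach is conceptually cleaner in that it subsumes the $p$-abelian case under the already-proved maximal-order statement, but the pivotal step --- that each $R_j$ is integrally closed, so that $\Lambda(\G)$ is a maximal order via \cref{MO8.7} and \cref{MO10.5ii} --- relies on the explicit description of the $R_j$ as power series rings over unramified extensions of $\ZZ_p$, which is information drawn from the proof of \cref{p-abelian-EIMC}.\ref{item:p-abelian-EIMC-i} rather than from its statement. You flag this as the delicate step, and rightly so; the claim is true, but it requires appealing to the structure theory in the cited Johnston--Nickel paper beyond what the corollary itself records. The paper's argument is more self-contained precisely because it only uses the stated form $\bigoplus_j M_{n_j}(R_j)$ with $R_j$ commutative local, never needing normality of the $R_j$: integrality of the determinant of a matrix over a commutative ring is automatic, and the diagonal construction of $\tilde z$ works over any commutative local ring. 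Both proofs correctly reduce the reduced norm to ordinary determinants on each Wedderburn block, which is the shared essential observation.
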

\begin{proof}
	As in the proof of \cref{thm:integrality}, using \cref{sEIMCu} (which holds for $p$-abelian extensions {by \cref{p-abelian-EIMC}}), we have
	\begin{align}
		\zeta_S^T &= [\alpha]\cdot[\beta], \label{eq:pab-zeta} \\
		\intertext{where $\alpha\in M_m(\Lambda(\G))\cap \GL_m(\Q(\G))$ and $\beta\in \GL_n(\Lambda(\G))$. Replacing $\alpha$ by $\alpha\beta$, and potentially enlarging $m$ in case $n>m$, applying the reduced norm map on $K_1(\Q(\G))$ to \eqref{eq:pab-zeta} yields}
		\Phi_S^T=\nr \zeta_S^T &= \nr [\alpha] = \nr_{M_n(\Q(\G))/\cent(\Q(\G))} \alpha, \label{eq:pab-Phi}
	\end{align}
	where the first equality is again due to \cref{sEIMCu}.
	
	Let us now study the reduced norm map on $M_n(\Lambda(\G))$ in general. This is the restriction of the reduced norm map $M_n(\Q(\G))\to \cent(\Q(\G))$. Recall that $\Q(\G)$ is semisimple while $\Lambda(\G)$ is not, therefore a priori, the reduced norm on $M_n(\Lambda(\G))$ takes values in $\cent(\Q(\G))$.
	However, using the description of $\Lambda(\G)$ from \cref{p-abelian-EIMC}, we can see that in the $p$-abelian case, the reduced norm actually maps {$\Lambda(\G)$} to $\cent(\Lambda(\G))$. Indeed, the following diagram commutes (without the dashed arrow), and thus the left vertical arrow induces the dashed arrow.
	\[
	\begin{tikzcd}
		M_m(\Lambda(\G)) \arrow[rr, "\sim"] \arrow[rd, "\nr", dashed] \arrow[d,"\nr"] && \displaystyle\bigoplus_j M_{mn_j}(R_j) \arrow[d, "\bigoplus_j\det"] \\
		\cent(\Q(\G)) & \cent(\Lambda(\G)) \ar[l,hook'] \arrow[r, "\sim"] & \displaystyle\bigoplus_j R_j 
	\end{tikzcd}
	\]
	We conclude that the reduced norm of $\alpha$ on the right hand side of $\eqref{eq:pab-Phi}$ is in $\cent(\Lambda(\G))$, and hence the same is true for $\Phi_S^T$. The first assertion is now proven.
	
	Let $z\in\cent(\Lambda(\G))$ be an arbitrary central element, and let $m\colonequals 1$ in the diagram above. {Let $(z_j)_j\in \bigoplus_j R_j$ be the image of $z$ under the right-pointing bottom horizontal map.} Let $\tilde Z$ be the tuple in $\bigoplus_j M_{n_j}(R_j)$ whose $j$th component is $\diag(z_j,1,\ldots,1)$. (Note that this element is not central unless  $n_j=1$ for all indices $j$.) 
	Then the image of $\tilde Z$ under $\bigoplus_j\det$ is $(z_j)_j$.
	Let $\tilde z\in \Lambda(\G)$ be the element corresponding to $\tilde Z$ under the top horizontal isomorphism. So in the diagram above, we have
	\[\begin{tikzcd}
		\tilde z \ar[r,mapsto] \ar[d,mapsto,dashed,"\nr"] & \tilde Z \ar[d,mapsto,"\bigoplus_j\det"] \\
		z \ar[r,mapsto] & (z_j)_j
	\end{tikzcd}
	\]
	In particular, we have $\nr(\tilde z)=z$. Setting $z\colonequals\Phi_S^T$, it follows that $\Phi_S^T$ is in the image of
	\[\Lambda(\G)\cap\Q(\G)^\times\to K_1(\Q(\G))\xrightarrow{\nr}\cent(\Q(\G))^\times.\]
	This finishes the proof.
\end{proof}

\begin{remark}
	The ``if'' part of \cref{p-abelian-EIMC} shows that when $\G$ is not $p$-abelian, the reduced norm won't just be a product of determinants, and the argument above does not generalise.
\end{remark}

\printbibliography
\end{document}